\documentclass[a4paper,11pt]{article}
\pdfoutput=1

\usepackage[utf8]{inputenc}
\usepackage[T1]{fontenc}

\usepackage{amsmath,amssymb,amsfonts,amsthm}
\usepackage{mathrsfs}
\usepackage[all]{xy}
\usepackage{hyperref}
\usepackage{geometry}

\geometry{hmargin=3.5cm, vmargin=4.5cm }

\title{The complex symplectic geometry of the deformation space of complex projective structures}
\author{Brice Loustau}
\date{}

\newcommand{\Z}{\mathbb{Z}}
\newcommand{\R}{\mathbb{R}}
\newcommand{\C}{\mathbb{C}}
\newcommand{\HH}{\mathbb{H}}

\newcommand{\CPS}{\mathcal{CP}(S)}
\newcommand{\TS}{\mathcal{T}(S)}
\newcommand{\FS}{\mathcal{F}(S)}
\newcommand{\QFS}{\mathcal{QF}(S)}

\newcommand{\RS}{\mathcal{R}(S)}

\newcommand{\XS}{\mathcal{X}(S)}

\newcommand{\slC}{\mathfrak{sl}_2(\C)}

\newcommand{\CP}{\C\mathbf{P}^1}

\DeclareMathOperator{\hol}{\mathit{hol}}
\DeclareMathOperator{\PSL}{\mathit{PSL}}
\DeclareMathOperator{\SO}{\mathit{SO}}
\DeclareMathOperator{\SL}{\mathit{SL}}
\DeclareMathOperator{\pr}{pr}
\DeclareMathOperator{\HB}{\mathit{HB}}


\theoremstyle{plain}
\newtheorem{theorem}{Theorem}[section]
\newtheorem{coro}[theorem]{Corollary}
\newtheorem{propo}[theorem]{Proposition}
\newtheorem{lemma}[theorem]{Lemma}

\theoremstyle{definition}
\newtheorem{defin}[theorem]{Definition}

\newtheorem{remark}[theorem]{Remark}

\begin{document}

\maketitle

This article investigates the complex symplectic geometry of the deformation space of complex projective 
structures on a closed oriented surface of genus at least $2$.
The cotangent symplectic structure given by the Schwarzian parametrization is studied carefully and compared to the 
Goldman symplectic structure on the character variety, clarifying and generalizing a theorem of S. Kawai \cite{kawai}. 
Generalizations of results of C. McMullen are derived,
notably quasi-Fuchsian reciprocity \cite{mcmullenkahler}. 
The symplectic geometry is also described in a Hamiltonian setting with the complex Fenchel-Nielsen coordinates
on quasi-Fuchsian space, recovering results of I. Platis \cite{platis}.

\setcounter{tocdepth}{2}
\pdfbookmark[0]{Contents}{Contents}
\tableofcontents

\section{Introduction}

Complex projective structures on surfaces are rich examples of geometric 
structures. 
They include in particular the three classical homogeneous
Riemannian geometries on surfaces (Euclidean, spherical, hyperbolic) and they extend the theory of complex structures on surfaces, 
{\it i.e.} Teichm\"uller theory. They also have a strong connection to hyperbolic structures on $3$-manifolds. Another feature
is their analytic description using the Schwarzian derivative, which turns the deformation space of complex projective structures
into a holomorphic affine bundle modeled
on the cotangent bundle to Teichm\"uller space. A natural complex symplectic geometry shows through these different perspectives,
 which has been discussed by various authors, e.g. \cite{kawai},
\cite{platis} and \cite{Gsl2}.
This article attempts a unifying picture of the complex symplectic geometry
of the deformation space of complex projective structures on surfaces, one that carefully relates the different approaches
\footnote{Of course, this has already been done at least partially by authors including the three previously mentioned.}.

\begin{center}
\rule{120pt}{.5pt}
\end{center}
\vspace{1em}

Let $S$ be a closed oriented surface of genus $g\geqslant 2$. A complex projective structure on $S$ is given
by an atlas of charts mapping open sets of $S$ into the projective line $\CP$ such that the transition maps are restrictions
of projective linear transformations. The deformation space of projective structures $\CPS$ is the space of equivalence classes
of projective structures on $S$, where two projective structures are considered equivalent
if they are diffeomorphic\footnote{More precisely, diffeomorphic by a homotopically trivial diffeomorphism, see section \ref{defts}.}.
Any projective atlas is in particular a holomorphic atlas, therefore a projective structure defines an underlying complex structure. 
This gives
a forgetful projection $p : \CPS \to \TS$, where $\TS$ is the Teichm\"uller space of $S$, defined as the deformation 
space of complex structures on $S$.

The Schwarzian derivative is a differential operator that turns the fibers of $p$ into complex affine spaces.
Globally, $\CPS$ is a holomorphic affine bundle modeled on the holomorphic cotangent bundle $T^*\TS$. 
This yields an identification
$\CPS \approx T^*\TS$, but it is not canonical: it depends on the choice of the ``zero section'' $\sigma : \TS \to \CPS$. 
There
are at least two natural choices of sections to be considered. 
The Fuchsian section $\sigma_{\cal F}$ assigns to a Riemann surface $X$
its Fuchsian projective structure given by the uniformization theorem. However, $\sigma_{\cal F}$ is not holomorphic. 
The other natural choice
is that of a Bers section, given by Bers' simultaneous uniformization theorem. 
Bers sections are a family of holomorphic sections
parametrized by Teichm\"uller space. Like any holomorphic cotangent bundle, $T^*\TS$ is equipped with a canonical complex symplectic 
form $\omega_{\mathrm{can}}$.
Each choice of a zero section $\sigma$ thus yields a symplectic structure $\omega^\sigma$ on $\CPS$, 
simply by pulling back the canonical symplectic form of
$T^*\TS$. A first natural question is: How is $\omega^\sigma$ affected by $\sigma$? 
A small computation shows:
\newtheorem*{craca}{Proposition \ref{craca}}
\begin{craca}
 For any two sections $\sigma_1$ and $\sigma_2$ to $p: \CPS \to \TS$,
\begin{equation}\omega^{\sigma_2} - \omega^{\sigma_1} = - p^* d(\sigma_2 - \sigma_1)~.\end{equation}
\end{craca}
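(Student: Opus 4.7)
The plan is to exploit the affine bundle structure on $p : \CPS \to \TS$ and reduce the claim to the well-known behavior of the canonical symplectic form of a cotangent bundle under fiberwise translation. Since $\CPS$ is an affine bundle modeled on $T^*\TS$, the difference $\alpha := \sigma_2 - \sigma_1$ is unambiguously defined as a global holomorphic section of $T^*\TS$, i.e.\ a holomorphic $1$-form on $\TS$. Each section $\sigma_i$ provides a bundle isomorphism $\Phi_i : \CPS \to T^*\TS$ given, in the affine sense, by $\Phi_i(z) = z - \sigma_i(p(z))$, so that $\omega^{\sigma_i} = \Phi_i^* \omega_{\mathrm{can}}$ tautologically. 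A direct comparison shows that the two identifications are related by fiberwise translation by $\alpha$: writing $\tau_\alpha : T^*\TS \to T^*\TS$ for translation by the $1$-form $\alpha$, one has $\Phi_1 = \tau_\alpha \circ \Phi_2$.

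With this setup the computation reduces to evaluating $\tau_\alpha^* \omega_{\mathrm{can}}$. Let $\lambda_{\mathrm{can}}$ denote the Liouville $1$-form and $\pi : T^*\TS \to \TS$ the projection. The tautological property $\beta^* \lambda_{\mathrm{can}} = \beta$ for any $1$-form $\beta$ on $\TS$, after evaluation on tangent vectors at a point $(x,\xi)$, translates directly into the identity $\tau_\alpha^* \lambda_{\mathrm{can}} = \lambda_{\mathrm{can}} + \pi^*\alpha$. Applying $d$ yields $\tau_\alpha^* \omega_{\mathrm{can}} = \omega_{\mathrm{can}} \pm \pi^* d\alpha$, with the sign depending on the convention $\omega_{\mathrm{can}} = \pm d\lambda_{\mathrm{can}}$. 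Pulling back by $\Phi_2$ and using $\pi \circ \Phi_2 = p$ then produces $\omega^{\sigma_1} - \omega^{\sigma_2} = \pm\, p^* d\alpha$, and fixing the convention so as to match the statement gives the claimed identity.

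No real difficulty is expected: the only delicate point is to keep careful track of the sign conventions and of the direction of the translation $\tau_\alpha$, which together must be arranged to produce the minus sign in the proposition. Everything else is a routine unwinding of the definition of the Liouville form and of the affine identification $\CPS \approx T^*\TS$ induced by a choice of zero section.
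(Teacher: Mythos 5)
Your argument is correct, and it reaches the identity by a slightly different route than the paper. The paper computes $-p^*d(\sigma_2-\sigma_1)$ directly from the reproducing property $\alpha^*\omega_{\mathrm{can}}=d\alpha$ applied to the $1$-form $\sigma_2-\sigma_1$, and then needs the additivity $\left(\tau^{\sigma_2}-\tau^{\sigma_1}\right)^*\omega_{\mathrm{can}}=\left(\tau^{\sigma_2}\right)^*\omega_{\mathrm{can}}-\left(\tau^{\sigma_1}\right)^*\omega_{\mathrm{can}}$, a step the author explicitly flags as ``not so trivial as it would seem because one has to be careful about base points.'' You instead factor the comparison through the fiberwise translation $\tau_\alpha$, so that everything reduces to the standard identity $\tau_\alpha^*\lambda_{\mathrm{can}}=\lambda_{\mathrm{can}}+\pi^*\alpha$, which is a transparent pointwise computation using $\pi\circ\tau_\alpha=\pi$. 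This is arguably cleaner: the delicate base-point bookkeeping is absorbed into a formula whose verification is mechanical. Two small points to tighten. First, with the paper's convention $\omega_{\mathrm{can}}=d\xi$ and your (correct) relation $\Phi_1=\tau_\alpha\circ\Phi_2$ with $\alpha=\sigma_2-\sigma_1$, the sign is not a convention to be ``fixed to match the statement'' but comes out determined: $\omega^{\sigma_1}=\omega^{\sigma_2}+p^*d\alpha$, which is exactly the claim; you should carry the sign through rather than leave it as $\pm$. Second, $\sigma_2-\sigma_1$ need only be a \emph{smooth} section of $T^*\TS$, not a holomorphic one --- the proposition is stated for arbitrary sections and is later applied with the non-holomorphic Fuchsian section $\sigma_{\cal F}$ --- but this costs nothing, since the translation formula holds for any smooth $\alpha$.
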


A significantly different description of $\CPS$ is given by the holonomy of complex projective structures. Holonomy is a concept defined
for any geometric structures, in this situation it gives a local identification 
$\hol : \CPS \to {\cal X}(S,\PSL_2(\C))$, where the character
variety ${\cal X}(S, \PSL_2(\C))$ is defined as a quotient of the set of representations 
$\rho : \pi_1(S) \to \PSL_2(\C)$. By a general construction of
Goldman, ${\cal X}(S, \PSL_2(\C))$ enjoys a natural complex symplectic structure $\omega_G$. 
Does this symplectic structure compare to the cotangent symplectic
structures $\omega^\sigma$ introduced above? A theorem of Kawai \cite{kawai} gives a pleasant answer to that question: If $\sigma$ is any Bers
section, then $\omega^{\sigma}$ and $\omega_G$\footnote{We mean here $\hol^*\omega_G$ rather than $\omega_G$, but we abusively use the same notation for the two
(as explained in section \ref{holonomy}).} agree up to some constant. Kawai's proof is highly technical and not very insightful though.
Also, the conventions chosen in his paper can be 
misleading\footnote{With the conventions chosen in his paper, Kawai finds $\omega^{\sigma} = \pi \omega_G$. Compare with our result: 
$\omega^{\sigma} = -i \omega_G$. Whether the constant is real or imaginary does matter when taking the real and imaginary parts, obviously,
and this can be significant. Kawai's choices imply that
$\omega_G$ takes imaginary values in restriction to the Fuchsian slice, which does not seem very relevant. Goldman showed in \cite{goldmannature}
that (with appropriate conventions) $\omega_G$ is just the Weil-Petersson K\"ahler form on the Fuchsian slice. For the interested reader, we believe that, 
even after rectifying the conventions, there is a factor $2$ missing in Kawai's result.}. Relying on theorems of other authors,
we give a simple alternative proof of Kawai's result. In fact, we are able to do a little better and completely answer the question raised above. Our argument 
is based on the observation that there is an intricate circle of related ideas:
\begin{itemize}
 \item[$\mathbf{(i)}$]  $p : \CPS \to \TS$ is a Lagrangian fibration (with respect to $\omega_G$).
 \item[$\mathbf{(ii)}$] Bers sections $\TS \to \CPS$ are Lagrangian (with respect to $\omega_G$).
 \item[$\mathbf{(iii)}$] If $M$ is a $3$-manifold diffeomorphic to $S \times \R$, 
 then the Bers simultaneous uniformization map $\beta : {\cal T}(\partial_\infty M) \to {\cal CP}(\partial_\infty M)$
 is Lagrangian (with respect to $\omega_G$).
 \item[$\mathbf{(iv)}$] $\omega_G$ restricts to the Weil-Petersson K\"ahler form $\omega_{WP}$ on the Fuchsian slice.
 \item[$\mathbf{(v)}$] If $\sigma$ is any Bers section, then $d(\sigma_{\cal F}-\sigma) = -i \omega_{WP}$.
 \item[$\mathbf{(vi)}$] McMullen's quasi-Fuchsian reciprocity (see \cite{mcmullenkahler} and Theorem \ref{qfrcp}).
 \item[$\mathbf{(vii)}$] For any Bers section $\sigma$, $\omega^{\sigma} = -i \omega_G$.
\end{itemize}
Let us briefly comment on these. $\mathbf{(iv)}$ is a result due to Goldman (\cite{goldmannature}). $\mathbf{(v)}$ and $\mathbf{(vi)}$ are closely
related and due to McMullen (\cite{mcmullenkahler}). Steven Kerckhoff discovered that $\mathbf{(iii)}$ 
easily follows from a standard argument, we include this argument in our presentation (Theorem \ref{lagemb}) for
completeness. $\mathbf{(vii)}$ appears to be the strongest result, as it is not too hard to see that it implies all other results\footnote{
This is not entirely true {\it per se}, but we do not want to go into too much detail here.}. However, using Proposition \ref{craca}
written above and a simple analytic continuation argument (Theorem \ref{bilibili}), we show that $\mathbf{(iv)}$ and $\mathbf{(v)}$ imply $\mathbf{(vii)}$.
In fact, we give a characterization
of sections $\sigma$ such that $\omega^\sigma$ agrees with $\omega$:
\newtheorem*{malauvent}{Theorem \ref{malauvent}}
\begin{malauvent}
Let $\sigma : \TS \rightarrow \CPS$ be a section to $p$. Then $\omega^\sigma$ agrees with the standard complex symplectic structure $\omega_G$ on $\CPS$
if and only if $\sigma_{\cal F} - \sigma$ is a primitive for the Weil-Petersson metric on $\TS$:
\begin{equation}\omega^{\sigma} = \omega_G ~ \Leftrightarrow ~ d(\sigma_{\cal F} - \sigma) = \omega_{WP}~.\end{equation}
\end{malauvent}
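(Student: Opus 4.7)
The plan is to deduce both directions from a single pullback calculation via the Fuchsian section $\sigma_{\cal F}$, using Goldman's identity $\sigma_{\cal F}^*\omega_G = \omega_{WP}$ (item $\mathbf{(iv)}$), and to close the converse direction by the analytic continuation principle Theorem \ref{bilibili}.

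The preliminary step is to extract from Proposition \ref{craca} the pullback formula
\begin{equation}
\sigma_{\cal F}^* \omega^\sigma = d(\sigma_{\cal F} - \sigma)~.
\end{equation}
This is obtained by setting $\sigma_1 = \sigma_{\cal F}$ and $\sigma_2 = \sigma$ in Proposition \ref{craca} and pulling back by $\sigma_{\cal F}$: the term $\sigma_{\cal F}^*\omega^{\sigma_{\cal F}}$ vanishes because $\sigma_{\cal F}$ corresponds to the zero section under the identification $\CPS \approx T^*\TS$ it induces, while $p \circ \sigma_{\cal F} = \mathrm{id}_{\TS}$ removes the $p^*$. The ``$\Rightarrow$'' direction then falls out at once: assuming $\omega^\sigma = \omega_G$, pulling back by $\sigma_{\cal F}$ and invoking item $\mathbf{(iv)}$ immediately yields $d(\sigma_{\cal F} - \sigma) = \omega_{WP}$.

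For the converse, the same computation shows that $\omega^\sigma - \omega_G$ is a closed 2-form on $\CPS$ whose pullback by $\sigma_{\cal F}$ vanishes. I would then invoke Theorem \ref{bilibili}, the analytic continuation statement that a closed holomorphic 2-form on $\CPS$ is determined by its pullback to the Fuchsian slice, the key geometric input being that $\sigma_{\cal F}(\TS)$ is a totally real submanifold of $\CPS$ of maximal real dimension. A subsidiary point to address is that $\omega^\sigma - \omega_G$ must be holomorphic for \ref{bilibili} to apply; this is not automatic when $\sigma$ is not holomorphic, but it can be forced by using Proposition \ref{craca} again to compare $\omega^\sigma$ with $\omega^{\sigma_B}$ for a Bers section $\sigma_B$ (whose associated $\omega^{\sigma_B}$ is holomorphic), reducing the question to a type decomposition of $d(\sigma_{\cal F} - \sigma)$ and $d(\sigma_{\cal F} - \sigma_B)$.

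The main obstacle is Theorem \ref{bilibili} itself: it is the totally real geometry of the Fuchsian slice inside the complex manifold $\CPS$ that upgrades the converse direction from a formal pullback tautology to a genuine global statement about the 2-form on all of $\CPS$. Everything else is a direct consequence of Proposition \ref{craca} together with Goldman's identification of $\omega_G$ with $\omega_{WP}$ on the Fuchsian locus.
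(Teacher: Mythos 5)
Your argument is essentially the paper's own proof: the same pullback identity ${(\sigma_{\cal F})}^*\omega^{\sigma}=d(\sigma_{\cal F}-\sigma)$ extracted from Proposition \ref{craca} (using ${s_0}^*\omega_{\mathrm{can}}=0$ and $p\circ\sigma_{\cal F}=\mathrm{id}$), combined with Goldman's identity ${(\sigma_{\cal F})}^*\omega_G=\omega_{WP}$ and the analytic continuation Theorem \ref{bilibili}. The ``subsidiary point'' you flag is genuine and is in fact glossed over in the paper, whose proof applies Theorem \ref{bilibili} directly to $\omega^{\sigma}$ without checking that it is of type $(2,0)$; if you actually carry out the type decomposition you propose, the hypothesis $d(\sigma_{\cal F}-\sigma)=c\,\omega_{WP}$ (pure type $(1,1)$) together with McMullen's $d(\sigma_{\cal F}-\sigma_B)=-i\,\omega_{WP}$ for a Bers section $\sigma_B$ yields $\bar\partial(\sigma-\sigma_B)=-(c+i)\,\omega_{WP}$, so the $(1,1)$-component $(c+i)\,p^*\omega_{WP}$ of $\omega^{\sigma}$ vanishes only when $c=-i$. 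Hence the converse direction closes unconditionally exactly in the case $c=-i$ that is actually used to derive Theorem \ref{main1} (or if one restricts to holomorphic sections, for which Theorem \ref{bilibili} applies as stated); for $c=1$ and a general smooth section the repair you sketch cannot succeed, since $\omega^{\sigma}$ then retains a nonzero $(1,1)$-part and cannot equal the $(2,0)$-form $\omega_G$ --- a defect of the statement for non-holomorphic sections rather than of your argument, which otherwise coincides with the paper's.
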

$\mathbf{(vii)}$ then follows from McMullen's theorem $\mathbf{(v)}$:
\newtheorem*{main1}{Theorem \ref{main1}}
\begin{main1}
If $\sigma : \TS \rightarrow \CPS$ is any Bers section, then \begin{equation}\omega^\sigma = -i \omega_G~.\end{equation}
\end{main1}
We also get the expression of the symplectic structure pulled back by the Fuchsian identification:
\newtheorem*{fghh}{Corollary \ref{fghh}}
\begin{fghh}
Let $\sigma_{\cal F} : \TS \to \CPS$ be the Fuchsian section. Then
\begin{equation}\omega^{\sigma_{\cal F}} = -i(\omega_G - p^* \omega_{WP})~.\end{equation}
\end{fghh}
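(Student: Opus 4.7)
The plan is to deduce the corollary immediately from the two preceding results (Theorem \ref{main1} and Proposition \ref{craca}), together with McMullen's identity $\mathbf{(v)}$: $d(\sigma_{\cal F} - \sigma) = -i\,\omega_{WP}$ for any Bers section $\sigma$. No new computation should be required; this is just the arithmetic of comparing the two natural reference sections.

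Concretely, I would pick an arbitrary Bers section $\sigma : \TS \to \CPS$ and apply Proposition \ref{craca} with $\sigma_1 = \sigma$ and $\sigma_2 = \sigma_{\cal F}$, giving
\begin{equation}
\omega^{\sigma_{\cal F}} \;=\; \omega^{\sigma} \;-\; p^*\, d(\sigma_{\cal F} - \sigma)~.
\end{equation}
Theorem \ref{main1} identifies the first term as $\omega^{\sigma} = -i\,\omega_G$, while McMullen's relation $\mathbf{(v)}$ identifies $d(\sigma_{\cal F}-\sigma) = -i\,\omega_{WP}$. Substituting both into the displayed equation yields
\begin{equation}
\omega^{\sigma_{\cal F}} \;=\; -i\,\omega_G \;-\; p^*(-i\,\omega_{WP}) \;=\; -i\bigl(\omega_G - p^*\omega_{WP}\bigr)~,
\end{equation}
which is the claimed formula.

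The only thing worth double-checking is that the right-hand side is independent of the auxiliary Bers section $\sigma$ used in the argument, as it must be since the left-hand side does not involve $\sigma$; this is automatic because both $\omega^\sigma$ and $d(\sigma_{\cal F}-\sigma)$ depend on $\sigma$ in compatible ways (indeed, any two Bers sections differ by the differential of a holomorphic function on $\TS$, which affects both terms identically by Proposition \ref{craca}). There is no real obstacle here: once Theorem \ref{main1} and McMullen's primitive relation $\mathbf{(v)}$ are in hand, the corollary is a one-line consequence of the affine-bundle comparison formula in Proposition \ref{craca}.
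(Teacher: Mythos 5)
Your proposal is correct and follows exactly the route the paper intends: the corollary is stated immediately after Theorem \ref{main1} with the remark ``Looking back at Proposition \ref{craca}, we also get\dots'', i.e.\ combine $\omega^{\sigma_{\cal F}} = \omega^{\sigma} - p^*d(\sigma_{\cal F}-\sigma)$ with $\omega^\sigma = -i\omega_G$ and McMullen's $d(\sigma_{\cal F}-\sigma) = -i\omega_{WP}$. The signs all check out, and your closing remark on independence of the auxiliary Bers section is a harmless sanity check.
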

Generalizing these ideas in the setting of convex cocompact $3$-manifolds, we prove a generalized version of Theorem \ref{main1}, 
relying on a result of 
{Takhtajan}-{Teo}\cite{takteo}\footnote{However, we stress that the proof also relies indirectly on Theorem \ref{main1}.}:
\newtheorem*{main2}{Theorem \ref{main2}}
\begin{main2}
Let $\sigma : \TS \to \CPS$ be a generalized Bers section. Then \begin{equation}\omega^\sigma = -i \omega_G~.\end{equation}
\end{main2}
We derive a generalization of McMullen's result $\mathbf{(v)}$:
\newtheorem*{coui}{Corollary \ref{coui}}
\begin{coui}
Let $\sigma : \TS \to \CPS$ be a generalized Bers section. Then \begin{equation}d(\sigma_{\cal F} - \sigma) = -i\omega_{WP}~.\end{equation}
\end{coui}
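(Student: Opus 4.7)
The plan is to deduce this corollary as a formal consequence of Theorem \ref{main2}, combined with the already-established machinery relating the cotangent symplectic forms on $\CPS$ to $\omega_G$, so that no new analysis is required.

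First, I would apply Proposition \ref{craca} to the pair $(\sigma_{\cal F}, \sigma)$: since both sections project to the identity under $p$, their difference $\sigma_{\cal F} - \sigma$ is a genuine holomorphic $1$-form on $\TS$, and
\begin{equation}
\omega^{\sigma_{\cal F}} - \omega^{\sigma} = -p^* d(\sigma_{\cal F} - \sigma)~.
\end{equation}
Next, Theorem \ref{main2} (available at this point in the paper) gives $\omega^{\sigma} = -i\omega_G$ for the generalized Bers section $\sigma$. Finally, Corollary \ref{fghh}, which was derived from the Bers-section case, gives an expression for $\omega^{\sigma_{\cal F}}$ in terms of $\omega_G$ and $\omega_{WP}$: $\omega^{\sigma_{\cal F}} = -i(\omega_G - p^*\omega_{WP})$.

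Substituting these two expressions into the identity above, the $\omega_G$ terms cancel and one is left with $i p^*\omega_{WP} = -p^* d(\sigma_{\cal F} - \sigma)$, that is $p^* d(\sigma_{\cal F}-\sigma) = -i\, p^*\omega_{WP}$. Since $p$ is a surjective submersion, $p^*$ is injective on differential forms on $\TS$ (pulling back along any section $\sigma$ recovers the form), and we conclude $d(\sigma_{\cal F} - \sigma) = -i\,\omega_{WP}$, as desired.

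The argument is essentially a $2$-line algebraic manipulation, so there is no real obstacle at this stage; the entire difficulty has been pushed into Theorem \ref{main2}, whose proof invokes the Takhtajan--Teo extension of McMullen's Kähler potential to convex cocompact deformation spaces. One conceptual point worth highlighting is that this corollary can equivalently be read as a generalized McMullen primitive identity: the generalized Bers sections, viewed as $1$-forms on $\TS$ through their difference with $\sigma_{\cal F}$, all provide holomorphic primitives for $-i\omega_{WP}$, and this is precisely the Kähler-potential phenomenon being extended from the quasi-Fuchsian setting to the convex cocompact one.
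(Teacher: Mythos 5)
Your argument is correct and is essentially the paper's: the paper obtains this corollary by combining Theorem \ref{main2} with the constant-$c$ version of Theorem \ref{malauvent} (taking $c=-i$), and that equivalence packages exactly the algebra you perform by hand via Proposition \ref{craca} and Corollary \ref{fghh} together with the injectivity of $p^*$. The only cosmetic difference is that you re-derive the implication $\omega^{\sigma}=-i\omega_G \Rightarrow d(\sigma_{\cal F}-\sigma)=-i\omega_{WP}$ rather than citing it.
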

and a generalized version of McMullen's quasi-Fuchsian reciprocity:
\newtheorem*{qfrcp}{Theorem \ref{qfrcp}}
\begin{qfrcp}
Let $f : {\cal T}(S_j) \to {\cal CP}(S_k)$
and $g : {\cal T}(S_k) \to {\cal CP}(S_j)$ be reciprocal generalized Bers embeddings. Then $D_{X_j}f$ and $D_{X_k}g$ are dual maps.
In other words, for any $\mu\in T_{X_j}{\cal T}(S_j)$ and $\nu\in T_{X_k}{\cal T}(S_k)$,
\begin{equation} \langle D_{X_j}f(\mu),\nu \rangle = \langle \mu,D_{X_k}g(\nu) \rangle~.\end{equation}
\end{qfrcp}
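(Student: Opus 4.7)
The plan is to deduce this reciprocity directly from the generalized McMullen identity of Corollary \ref{coui}, by exploiting the product structure of ${\cal T}(\partial_\infty M) = \prod_l {\cal T}(S_l)$ when the conformal boundary of the ambient convex cocompact $3$-manifold $M$ is disconnected. Let $\sigma : {\cal T}(\partial_\infty M) \to {\cal CP}(\partial_\infty M)$ be the associated generalized Bers section. By construction, $f$ and $g$ are respectively the $k$-th and $j$-th components of $\sigma$, with the Teichm\"uller data on the other factors frozen at $(X_j,X_k)$.

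I would then form the $1$-form $\alpha = \sigma - \sigma_{\cal F}$ on ${\cal T}(\partial_\infty M)$, where $\sigma_{\cal F}$ is the (product) Fuchsian section, and split it along the factors as $\alpha = \sum_l \alpha_l$, where $\alpha_l(X_j,X_k,\dots)$ takes values in the fixed vector space $T^*_{X_l}{\cal T}(S_l)$. Using the affine identification of each fiber $p^{-1}(X_l) \subset {\cal CP}(S_l)$ with $T^*_{X_l}{\cal T}(S_l)$, a direct check shows that $D_{X_j}f$ is precisely the partial derivative $\partial_{X_j}\alpha_k$, viewed as a linear map $T_{X_j}{\cal T}(S_j) \to T^*_{X_k}{\cal T}(S_k)$, and symmetrically $D_{X_k}g = \partial_{X_k}\alpha_j$. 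For commuting vector fields extending $\mu \in T_{X_j}{\cal T}(S_j)$ and $\nu \in T_{X_k}{\cal T}(S_k)$ to the product, Cartan's formula then collapses to
\begin{equation*}
d\alpha(\mu,\nu) = \langle D_{X_j}f(\mu),\nu\rangle - \langle\mu,D_{X_k}g(\nu)\rangle~.
\end{equation*}

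Finally, Corollary \ref{coui} gives $d\alpha = -d(\sigma_{\cal F} - \sigma) = i\,\omega_{WP}$. Since the Weil-Petersson form on the product $\prod_l {\cal T}(S_l)$ is a direct sum of the Weil-Petersson forms on each factor, it vanishes identically on a mixed pair of tangent vectors coming from distinct factors $j \neq k$. Hence $d\alpha(\mu,\nu) = 0$, which is exactly the claimed duality. The main obstacle is not any single calculation but the affine housekeeping of the middle paragraph: one has to be careful that, because ${\cal CP}(S_l)$ is an affine bundle rather than a vector bundle, the identification of $D_{X_j}f$ with $\partial_{X_j}\alpha_k$ requires a consistent choice of "origin" in each fiber (provided here by the Fuchsian section), and that Corollary \ref{coui} has indeed been established in the generalized setting where $\partial_\infty M$ may be disconnected.
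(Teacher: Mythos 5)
Your overall strategy is viable and genuinely different from the paper's: the paper proves this by pushing two tangent vectors supported on the $j$-th and $k$-th factors through $\beta$, using that $\beta$ is Lagrangian for $\omega_G$ together with the vertical pairing formula $\omega_G(u,v)=i\langle u,p_*v\rangle$ of Corollary \ref{ikik}, whereas you work directly with the $1$-form $\alpha=\beta-\sigma_{\cal F}$ and identify the two sides of the reciprocity with the mixed partial derivatives $\partial_{X_j}\alpha_k$ and $\partial_{X_k}\alpha_j$. That computation of $d\alpha(\mu,\nu)$ is correct, and this is essentially the alternative route the paper attributes to Kerckhoff in the remark following its proof.

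However, there is a genuine gap in the final step: Corollary \ref{coui} does not give you $d\alpha=i\,\omega_{WP}$ on the product ${\cal T}(\partial \hat{M})$. That corollary concerns a generalized Bers section $\sigma=f_{(X_i),j}:{\cal T}(S_j)\to{\cal CP}(S_j)$ of a \emph{single} factor, with all other boundary data frozen; it only controls the exterior derivative of the diagonal block $\alpha_j$ restricted to the $j$-th slice, and says nothing about the cross terms $\partial_{X_j}\alpha_k$ versus $\partial_{X_k}\alpha_j$ for $j\neq k$ --- which is exactly the content of the theorem you are trying to prove. As written, the argument is circular (or vacuous) at the point where the off-diagonal vanishing is needed. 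The correct input is the product-level identity $d(\sigma_{\cal F}-\beta)=-i\,\omega_{WP}$ on all of ${\cal T}(\partial \hat{M})$, which is precisely Takhtajan--Teo's Kleinian reciprocity (Theorem 6.3 of \cite{takteo}, quoted in the paper just before Theorem \ref{puuff}); it is this statement, not Corollary \ref{coui}, whose closedness assertion encodes the mixed-derivative symmetry. With that substitution your proof goes through, since $\omega_{WP}$ on the product splits as a direct sum and hence vanishes on a pair of tangent vectors from distinct factors. Your cautionary remark about the affine bookkeeping is well taken but unproblematic: along the $j$-th slice the target fiber $P(X_k)$ and the model space $Q(X_k)$ are fixed, so $D_{X_j}f=\partial_{X_j}\alpha_k$ holds for any choice of origin, the Fuchsian one included.
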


Finally, we discuss the symplectic geometry of $\CPS$ in relation
to complex Fenchel-Nielsen coordinates on the quasi-Fuchsian space $\QFS$. These are global holomorphic coordinates on $\QFS$ introduced by
Kourouniotis (\cite{K3}) and Tan (\cite{tan}) that are the complexification of the classical Fenchel-Nielsen
coordinates on Teichm\"uller space $\TS$, or rather the Fuchsian space $\FS$. In \cite{W1}, \cite{W2} and \cite{W3}, Wolpert showed
that the Fenchel-Nielsen coordinates on $\FS$ encode the symplectic structure. For any simple closed
curve $\gamma$ on the surface $S$, there is a hyperbolic length function $l_\gamma : \FS \to \R$ and a twist flow $\mathrm{tw}_\gamma : \R \times \FS \to \FS$.
Given a pants decomposition $\alpha = (\alpha_1,\dots,\alpha_N)$\footnote{{\it i.e.} a maximal collection
of nontrivial distinct free homotopy classes of simple closed curves, see section \ref{reio}.} on $S$, 
choosing a section to $l_\alpha = (l_{\alpha_1}, \dots, l_{\alpha_N})$ yields the classical Fenchel-Nielsen coordinates on Teichm\"uller space
$(l_\alpha,\tau_\alpha) :  \FS \to {(\R_{>0})}^N \times \R^N$. Wolpert showed that the twist flow associated to a curve $\gamma$ is the Hamiltonian flow
of the length function $l_\gamma$. He also gave formulas for the Poisson bracket of two length functions, which show in particular that the length functions
$l_{\alpha_i}$ associated to a pants decomposition $\alpha$ define an integrable Hamiltonian system, for which the functions $l_{\alpha_i}$ are the action
variables and the twist functions $\tau_{\alpha_i}$ are the angle variables. In \cite{platis}, Platis shows that this very nice ``Hamiltonian picture'' remains
true in its complexified version on the quasi-Fuchsian space for some complex symplectic structure $\omega_P$, giving complex versions of Wolpert's results.
This Hamiltonian picture is also extensively explored on the $\SL_2(\C)$-character variety by Goldman in \cite{Gsl2}. Independently from Platis' work,
our analytic continuation argument shows that complex Fenchel-Nielsen coordinates are Darboux coordinates
for the symplectic structure on $\QFS$:
\newtheorem*{main3}{Theorem \ref{main3}}
\begin{main3}
Let $\alpha$ be any pants decomposition of $S$. Complex Fenchel-Nielsen coordinates $(l_\alpha^\C,\beta_\alpha^\C)$ on the 
quasi-Fuchsian space $\QFS$ are
Darboux coordinates for the standard complex symplectic structure:
\begin{equation}\omega_G = \sum_{i=1}^N dl_{\alpha_i}^\C \wedge d\tau_{\alpha_i}^\C\end{equation}
\end{main3}
and this shows in particular that
\newtheorem*{za}{Corollary \ref{za}}
\begin{za}
Platis' symplectic structure $\omega_P$ is equal to the standard complex symplectic structure $\omega_G$ on the
quasi-Fuchsian space $\QFS$\footnote{This fact is mentioned as ``apparent'' in \cite{platis} and is implied in \cite{Gsl2},
but it would seem that it was not ``formally proved''.}.
\end{za}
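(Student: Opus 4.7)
The plan is to deduce this corollary almost immediately from Theorem \ref{main3}. Platis' construction of $\omega_P$ in \cite{platis} is designed precisely so that a complex analogue of Wolpert's theorem holds: he establishes that for any pants decomposition $\alpha = (\alpha_1,\dots,\alpha_N)$ of $S$, the complex twist flow along $\alpha_i$ is the Hamiltonian flow, with respect to $\omega_P$, of the complex length function $l_{\alpha_i}^\C$, and that these Hamiltonians Poisson-commute. This is exactly the statement that complex Fenchel-Nielsen coordinates are Darboux coordinates for $\omega_P$, i.e.
\begin{equation}
\omega_P = \sum_{i=1}^N dl_{\alpha_i}^\C \wedge d\tau_{\alpha_i}^\C ~.
\end{equation}

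First I would recall (or spell out, if needed for the reader's convenience) this consequence of Platis' results, being careful that our sign and normalization conventions for $l_\gamma^\C$ and $\tau_\gamma^\C$ agree with his. Then, invoking Theorem \ref{main3}, which gives the identical expression for $\omega_G$ in the same coordinates on $\QFS$, the two complex symplectic forms must coincide pointwise, and the corollary follows.

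The only genuine obstacle is bookkeeping: one has to make sure that the complex length and twist parameters used by Platis match ours up to the obvious complex conjugation / factor-of-two conventions, and that no hidden factor appears when comparing his Hamiltonian formulation to our Darboux formulation. This is a purely conventional matter and does not require new ideas. I would therefore present the corollary as a short remark following the proof of Theorem \ref{main3}, explicitly pointing out, in the footnote or main text, why the statement had previously only been noted as ``apparent'' in \cite{platis} and implicit in \cite{Gsl2}.
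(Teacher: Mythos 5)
Your proposal matches the paper's argument exactly: Platis' own theorem (recalled in Section 4.3) gives $\omega_P = \sum_{i=1}^N dl_{\alpha_i}^\C \wedge d\tau_{\alpha_i}^\C$, and Theorem \ref{main3} gives the same expression for $\omega_G$, so the two forms coincide. The paper presents the corollary as an immediate consequence in just this way, so no further comment is needed.
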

We thus recover Platis' and some of Goldman's results, in particular that the complex twist flow is the Hamiltonian flow of the associated complex
length function. Although in the Fuchsian case it would seem unnecessarily sophisticated to use this as a definition of the twist flow,
this approach might be fruitful in the space of projective structures. This transformation relates
to what other authors have called \emph{quakebends} or \emph{complex earthquakes} discussed by Epstein-Marden \cite{EP}, McMullen \cite{mcmullenearthquakes}, 
Series \cite{series} among others.

\begin{center}
\rule{120pt}{.5pt}
\end{center}
\vspace{1em}

\emph{Note:} The study of Taubes' symplectic structure on the deformation space of \emph{minimal
hyperbolic germs}\footnote{We refer to \cite{taubes} and \cite{article2} for the interested reader.}
(in restriction to \emph{almost-Fuchsian space}, which embeds as an open subspace of quasi-Fuchsian space
$\QFS$) is addressed in a forthcoming paper (\cite{article2}).
Both articles are based on the author's PhD thesis (\cite{bricethesis}).

\bigskip

\emph{Structure of the paper:} Section \ref{TSCPS} reviews complex projective structures, 
Fuchsian and quasi-Fuchsian projective structures, the relation between complex projective structures and hyperbolic $3$-manifolds,
(generalized) Bers sections and embeddings. Section \ref{cotangentst} introduces
the affine cotangent symplectic structures given by the Schwarzian parametrization of $\CPS$. Section \ref{charGs} reviews
the character variety, holonomy of projective structures, Goldman's symplectic structure and some of its properties.
In section \ref{complexFNP}, we briefly review Wolpert's ``Hamiltonian picture'' of Teichmüller space, then describe
the complex Fenchel-Nielsen coordinates in quasi-Fuchsian space and Platis' symplectic structure. In section \ref{compsst},
we describe an analytic continuation argument then discuss and compare the different symplectic structures previously 
introduced. Our results are essentially contained in that last section.

\bigskip

\emph{Acknowledgments:} This paper is based on part of the author's PhD thesis, which was supervised by Jean-Marc Schlenker. I wish
to express my gratitude to Jean-Marc for his kind advice. I would also like to thank Steven Kerckhoff, Francis Bonahon, Bill Goldman, 
David Dumas, Jonah Gaster, Andy Sanders, Cyril Lecuire, Julien Marché, among others with whom I have had helpful discussions.

\bigskip

The research leading to these results has received funding from the European Research Council under the {\em European Community}'s 
seventh Framework Programme (FP7/2007-2013)/ERC {\em  grant agreement}.

\section{Teichm\"uller space and the deformation space of complex projective structures}\label{TSCPS}

\subsection{\texorpdfstring{$\TS$ and $\CPS$}{{T(S)} and {CP(S)}}}\label{defts}

Let $S$ be a surface. Unless otherwise stated, we will assume that $S$ is connected\footnote{In some 
sections (e.g. \ref{cphyp}), we will allow $S$ to be disconnected in order to be able to consider the case where $S$
is the boundary
of a compact $3$-manifold. This does not cause any issue in the exposition above.}, oriented, smooth, 
closed and with genus $g \geqslant 2$.

A complex structure on $S$ is a maximal atlas of charts mapping open sets of $S$ into the complex line $\C$ such that
the transition maps are holomorphic.
The atlas is required to be compatible with the orientation and smooth structure on $S$.
A Riemann surface $X$ is a surface $S$ equipped with a complex structure.

The group $\mathrm{Diff}^+(S)$ of orientation preserving diffeomorphisms of $S$ acts on the set
of all complex structures on $S$ in a natural way:
a compatible complex atlas on $S$ is pulled back to another one by such diffeomorphisms.
Denote by $\mathrm{Diff}^+_0(S)$ the identity component of $\mathrm{Diff}^+(S)$, its elements are the 
orientation preserving diffeomorphisms of $S$ that are homotopic to the identity.
The quotient $\TS$ of the set of all complex structures on $S$ by $\mathrm{Diff}^+_0(S)$
is called the \emph{Teichm\"uller space} of $S$, its elements are called \emph{marked Riemann surfaces}.

In a similar fashion, define a \emph{complex projective structure} on $S$ as a maximal atlas of charts 
mapping open sets of $S$ into the complex 
projective line $\CP$ such that the transition maps are (restrictions of) projective linear transformations 
(\textit{i.e.} Möbius transformations of the Riemann sphere). The atlas is also required to be compatible 
with the orientation and smooth structure on $S$. A complex projective surface $Z$ is a surface $S$ equipped with a complex 
projective structure.
In terms of geometric structures (see e.g. \cite{thurston}),
a complex projective structure is a $(\CP,\PSL_2(\C))$-structure.

Again, $\mathrm{Diff}^+(S)$ naturally acts on the set of all complex projective structures on $S$. The quotient $\CPS$ by the subgroup
$\mathrm{Diff}^+_0(S)$ is called the \emph{deformation space of complex projective structures} on $S$, its
elements are \emph{marked complex projective surfaces}.

\subsubsection*{\texorpdfstring{$\TS$ and $\CPS$ are complex manifolds}{{T(S)} and {CP(S)} are complex manifolds}}\label{pif}

Kodaira-Spencer deformation theory (see \cite{KD}, also \cite{earleeells}) applies and it shows that 
$\TS$ is naturally a complex manifold with holomorphic
tangent space given by $T^{1,0}_X \TS = \check{H}^1(X,\Theta_X)$,
where $\Theta_X$ is the sheaf of holomorphic vector fields on $X$. 
Denote by $K$ the canonical bundle over $X$ (the holomorphic cotangent bundle of $X$).
By Dolbeault's theorem, $\check{H}^1(X,\Theta_X)$ is isomorphic to the Dolbeault cohomology space $H^{-1,1}(X)$.
Elements of $H^{-1,1}(X)$ are Dolbeault classes of smooth sections of $K^{-1} \otimes \bar{K}$, 
which are called \emph{Beltrami differentials}.
In a complex chart $z : U \subset S \rightarrow \C$, a Beltrami differential $\mu$
has an expression of the form $\mu = u(z) \frac{d\overline{z}}{dz}$ where $u$ is a smooth function.
The fact that we only consider (Dolbeault) classes of Beltrami differentials can be expressed as follows : 
if $V$ is a vector field on $X$ of type $(1,0)$, then the Beltrami differential $\overline{\partial} V$ induces
a trivial (infinitesimal) deformation of the complex structure $X$.
Recall that $X$ carries a unique hyperbolic metric within its conformal class (called the Poincar\'e metric) 
by the uniformization theorem.
By Hodge theory, every Dolbeault cohomology class has a unique harmonic representative $\mu$.
The tangent space $T_X \TS$ is thus also identified with the space $\HB(X)$ of harmonic Beltrami differentials.

We can also derive a nice description of the Teichm\"uller cotangent space using cohomology machinery.
$\check{H}^1(X,\Theta_X) = H^1(X,K^{-1})$ because $\mathrm{dim}_\C X= 1$ and this space is dual to $H^0(X,K^{2})$ by Serre duality.
An element $\varphi \in Q(X) := H^0(X,K^{2})$ \label{qx} is called a \emph{holomorphic quadratic differential}.
In a complex chart $z : U \subset S \rightarrow \C$, $\varphi$
has an expression of the form $\varphi = \phi(z) dz^{2}$, where $\phi$ is a holomorphic function.
The holomorphic cotangent space ${T}^*_X \TS$ is thus identified with the space $Q(X)$ of holomorphic quadratic differentials.
The duality pairing $Q(X) \times H^{-1,1}(X) \to \C$ is just given by $(\varphi,\mu) \mapsto \int_S \varphi \cdot \mu$.
Note that we systematically use tensor contraction (when dealing with line bundles over $X$) : $\varphi \cdot \mu$ 
is a section of $K \otimes \bar{K} \approx |K|^2$, so it defines a conformal density and can be integrated over $S$.
With the notations above, $\varphi \cdot \mu$ has local expression $\phi(z) u(z) |dz|^2$.

An easy consequence of the Riemann-Roch theorem is that $\mathrm{dim}_\C Q(X) = 3g-3$, so that $\TS$ is a complex manifold 
of dimension $3g-3$.

Similarly, Kodaira-Spencer deformation theory applies to show that $\CPS$ is naturally a complex manifold with
tangent space $T_Z \CPS = \check{H}^1(Z,\Xi_Z)$, where $\Xi_Z$ is the sheaf of projective vector fields on $Z$ (see also \cite{hubbard}).
It follows that $\CPS$ is a complex manifold of dimension $6g-6$.

Unlike Teichm\"uller tangent vectors, there is no immediate way to describe tangent vectors to $\CPS$ in a more tangible way.
However, note that a complex projective atlas is in particular a holomorphic atlas,
so that a complex projective surface $Z$ has an underlying structure of a 
Riemann surface $X$. This yields a forgetful map
\begin{equation} p : \CPS \to \TS \label{pfor} \end{equation} which is easily seen to be holomorphic.
We will see in section \ref{affinebundle} that the fiber $p^{-1}(X)$ is naturally a complex affine space whose underlying
vector space is $Q(X)$. In particular $\dim_\C \CPS = \dim_\C \TS \times \dim_\C Q(X) = 6g-6$ as expected.

\subsubsection*{\texorpdfstring{The Weil-Petersson K\"ahler metric on $\TS$}{The Weil-Petersson K\"ahler metric on {T(S)}}}

The \emph{Weil-Petersson product} of two holomorphic quadratic differentials $\Phi$ and $\Psi$ is given by
\begin{equation}
 \left<\varphi,\psi\right>_{WP} = -\frac{1}{4} \int_X \varphi \cdot {\sigma}^{-1} \cdot \overline{\psi} 
\end{equation}
where ${\sigma}^{-1}$ is the dual current of the area form $\sigma$ for the Poincar\'e metric\footnote{In
a complex chart with values in the upper half-plane $z = x + i y : U \subset X \to \HH^{2}$, 
the tensor product $-\frac{1}{4} \varphi \cdot {\sigma}^{-1} \cdot \overline{\psi}$ reduces to the classical expression
$\begin{displaystyle}
y^2 \varphi(z) \overline{\psi(z)} dx \wedge dy
\end{displaystyle}$.}. It is a Hermitian 
inner product on the complex vector space $Q(X)$.

By duality, this gives a Hermitian product also denoted by $\left<\cdot,\cdot\right>_{WP}$ on $H^{-1,1}(X)$ and globally a Hermitian
metric $\left<\cdot,\cdot\right>_{WP}$ on the manifold $\TS$.
It was first shown to be K\"ahler by Ahlfors \cite{ahlfors2}
and Weil.

The K\"ahler form of the Weil-Petersson metric on $\TS$ is the real symplectic form
\begin{equation}
 \omega_{WP} = - \mathrm{Im} \left<\cdot,\cdot\right>_{WP}~. \label{WP}
\end{equation}

\subsection{Fuchsian and quasi-Fuchsian projective structures}\label{fqf}

Note that whenever a Kleinian group $\Gamma$ ({\it i.e.} a discrete subgroup of $\PSL_2(\C)$) acts freely and properly 
on some open subset $U$
of the complex projective line $\CP$, the quotient surface $U/\Gamma$ inherits a complex projective structure.
This gives a variety (but not all) of complex projective surfaces, called embedded projective structures.

Fuchsian projective structures are a fundamental example of embedded projective structures. 
Given a marked complex structure $X$, the uniformization theorem provides a representation $\rho : \pi_1(S) \to \PSL_2(\R)$
such that $X$ is isomorphic to $\HH^2/\rho(\pi_1(S))$ as a Riemann surface (where $\HH^2$ is the upper half-plane).
$\HH^2$ can be seen as an open set (a disk) in $\CP$ and the Fuchsian group $\rho(\pi_1(S)) \subset \PSL_2(\R)$ is in particular a 
Kleinian group, so the quotient
$X \approx \HH^2/\rho(\pi_1(S))$ inherits a complex projective structure $Z$. This defines a section
\begin{equation}\label{fufuch}
 \sigma_{\cal F} : \TS \to \CPS
\end{equation}
to $p$, called the \emph{Fuchsian section}.
It shows in particular that the projection $p$ is surjective. 
We call $\FS:=\sigma_{\cal F}(\TS)$ the (deformation) space of (standard) Fuchsian (projective) structures on $S$, 
it is an embedded copy of $\TS$ in $\CPS$.

\emph{Quasi-Fuchsian structures} are another important class of embedded projective structures.
Given two marked complex structures $(X^+,X^-) \in \TS \times {\cal T}(\overline{S})$\footnote{
Note that ${\cal T}(\overline{S})$ is canonically identified with $\overline{\TS}$, 
which denotes the manifold $\TS$ equipped with the opposite complex structure.
The same remark holds for ${\cal CP}(\overline{S})$ and $\overline{\CPS}$.}
 (where $\overline{S}$ is the surface $S$ with reversed orientation),
Bers' simultaneous uniformization theorem states that there exists a
unique representation $\rho : \pi_1(S) \stackrel{\sim}{\to} \Gamma \subset \PSL_2(\C)$ up to conjugation such that:
\begin{itemize}
 \item[$\bullet$] The limit set\footnote{The \emph{limit set} $\Lambda = \Lambda(\Gamma)$ 
 is defined as the complement in $\CP$ of the domain
of discontinuity $\Omega$, which is the maximal open set on which $\Gamma$ acts freely and properly. Alternatively,
$\Lambda$ is described as the closure in $\CP$ of the set of fixed points of elements of $\Gamma$.} $\Lambda$ is a Jordan curve.
 The domain of discontinuity $\Omega$ is then 
 the disjoint union of two simply connected domains $\Omega^+$ and $\Omega^-$. A such $\Gamma$ is called a
 quasi-Fuchsian group.
 \item[$\bullet$] As marked Riemann surfaces, $X^+ \approx \Omega^+/\Gamma$ and $X^- \approx \Omega^-/\Gamma$.
\end{itemize}
Again, both Riemann surfaces $X^+$ and $X^-$ inherit embedded complex projective structures $Z^+$ and $Z^-$ by this construction.
This defines a map
$\beta  = (\beta^+,\beta^-) : 
\TS \times  {\cal T}(\overline{S}) \to \CPS \times  {\cal CP}(\overline{S})$
which is a holomorphic section to $p \times p : \CPS \times  {\cal CP}(\overline{S}) \to  \TS \times  {\cal T}(\overline{S})$
by Bers' theorem.
The map $\beta$ has the obvious symmetry property: $\overline{\beta^-(X^+,X^-)} = \beta^+(\overline{X^-},\overline{X^+})$.

In particular, when $X^- \in {\cal T}(\overline{S})$ is fixed, the map 
$\sigma_{X^-} := \beta^+(\cdot,X^-) : \TS \to \CPS \label{bbss}$
is a holomorphic section to $p$, called a \emph{Bers section}, 
and its image $\sigma_{X^-}(\TS)$ in $\CPS$ is called a \emph{Bers slice}. \label{berssection}
On the other hand, when $X^+ \in\TS$ is fixed, the map 
$f_{X^+} = \beta^+(X^+,\cdot) $
 is an embedding of ${\cal T}(\overline{S})$
 in the fiber $P(X^+) := p^{-1}(X^+) \subset \CPS$\footnote{which has the structure of a complex affine space as we 
 will see in section \ref{schwarzian}.},
 $f_{X^+}$ is called a \emph{Bers embedding}.
Also, note that $\sigma_{\cal F}(X) = \beta^+(X,\bar{X}) = \overline{\beta^-(X,\bar{X})} = \sigma_{\bar{X}}(X)$. This shows that the Fuchsian section
$\sigma_{\cal F}$ is real analytic but not holomorphic, in fact it is a maximal totally real analytic embedding, see section \ref{ancont}.

$\QFS:=\beta^+(\TS \times  {\cal T}(\overline{S})) \subset \CPS$ \label{qfdef} is called the (deformation) space of
(standard) quasi-Fuchsian (projective) structures on $S$. It is an open neighborhood of $\FS$ in $\CPS$
(this is a consequence of general arguments mentioned in the next paragraph), 
and it follows from the discussion above that Bers slices and Bers embeddings
define two transverse foliations of $\QFS$ by holomorphic copies of $\TS$.

\subsection{\texorpdfstring{Complex projective structures and hyperbolic $3$-manifolds}{Complex projective structures and hyperbolic 3-manifolds}} \label{cphyp}

In this paragraph, we briefly review the relation between complex projective structures on the boundary of a compact $3$-manifold $\hat{M}$ 
and hyperbolic structures
on its interior. The quasi-Fuchsian projective structures presented in the 
previous section occur as a particular case of this discussion. 
We then define \emph{generalized Bers sections} and \emph{generalized Bers embeddings}, and
fix a few notations for later sections.

Let $M$ be a connected complete hyperbolic $3$-manifold. The universal cover of $M$ is isometric to hyperbolic $3$-space $\HH^3$, 
this defines a unique faithful representation $\rho : \pi_1(M) \to \mathrm{Isom}^+(\HH^3) \approx \PSL_2(\C)$ up to conjugation such that 
$\Gamma:=\rho(\pi_1(M))$
acts freely and properly on $\HH^3$ and $M \approx \HH^3/\Gamma$. Let $\Omega \subset \CP$ be the domain of discontinuity of the Kleinian group $\Gamma$, it is 
the maximal open set on which $\Gamma$ acts freely and properly.
Here $\CP$ is seen as the ``ideal boundary'' of $\HH^3$, also denoted $\partial_\infty \HH^3$.
The possibly disconnected surface $\partial_\infty M := \Omega / \Gamma$ is called the \emph{ideal boundary} 
\label{idboundef} of $M$ and it inherits an embedded complex 
projective structure as the quotient of $\Omega \subset \CP$ by the Kleinian group $\Gamma$.
Conversely, any torsion-free Kleinian group $\Gamma$ acts freely and properly on $\HH^3 \sqcup \Omega$ 
(where $\Omega$ is the domain of discontinuity
of $\Gamma$), and the quotient consists of a $3$-manifold 
$\hat{M} = M \sqcup \partial_\infty M$, where $M = \HH^3 / \Gamma$ is a complete hyperbolic $3$-manifold 
and $\partial_\infty M = \Omega/\Gamma$ is its ideal boundary. In general, the manifold $\hat{M}$ is not compact, if it is then $\hat{M}$
is topologically the end compactification of $M$. In that case we say that the hyperbolic structure on $M$ is \emph{convex cocompact}.
The \emph{convex core} of $M$ is the quotient of the convex hull of the limit set $\Lambda$
in $\HH^3$ by $\Gamma$. 
It is well-known that $M$ is convex cocompact if and only if its convex core is a compact deformation retract of $M$.

Consider now a smooth $3$-manifold with boundary $\hat{M}$ with the following topological restrictions: 
$\hat{M}$ is connected, oriented, compact, irreducible\footnote{meaning that every embedded $2$-sphere
bounds a ball.}, atoroidal\footnote{meaning that it does not contain any embedded, non-boundary parallel, incompressible tori.}
and with infinite fundamental group.
Let $M = \hat{M} \setminus \partial \hat{M}$
denote the interior of $\hat{M}$. For simplicity, we also assume that the boundary $\partial \hat{M}$ is incompressible\footnote{
meaning that the map $\iota_* : \pi_1(\partial \hat{M}) \to \pi_1(\hat{M})$ induced by the inclusion map $\iota$ is injective.} and contains no tori,
so that it consists of a finite number of surfaces $S_1, \dots, S_N$ of genera at least $2$.
The Teichm\"uller space ${\cal T}(\partial \hat{M})$ is described as the direct product 
${\cal T}(\partial \hat{M}) = {\cal T}(S_1) \times \dots \times {\cal T}(S_N)$,
similarly ${\cal CP}(\partial \hat{M}) = {\cal CP}(S_1) \times \dots \times {\cal CP}(S_N)$
and there is a holomorphic ``forgetful'' projection
$p = p_1 \times \dots \times p_N : {\cal CP}(\partial \hat{M}) \rightarrow {\cal T}(\partial \hat{M})$.
Let  $\pr_k : {\cal CP}(\partial \hat{M}) \rightarrow {\cal CP}(S_k)$ denote the 
$k^{\textrm{th}}$ \label{defprmap} projection map. Let us consider the space ${\cal HC}(M)$ of convex cocompact
hyperbolic structures on $M$ up to homotopy. In other words, we define ${\cal HC}(M)$ as the quotient of the set of convex cocompact hyperbolic metrics on $M$
by the group of orientation-preserving diffeomorphisms of $M$ that are homotopic to the identity. Let us mention that Marden \cite{mardeng}
and Sullivan \cite{sullivang} showed that
${\cal HC}(M)$ is a connected component of the interior of the subset of discrete and faithful representations in the 
character variety ${\cal X}(M,PSL_2(\C))$. By the discussion above, any element of ${\cal HC}(M)$ determines a marked
complex projective structure $Z \in {\cal CP}(\partial \hat{M})$. We thus have a map $\varphi : {\cal HC}(M) \to {\cal CP}(\partial \hat{M})$, and it is shown to be 
holomorphic, this is a straightforward consequence of the fact 
that the holonomy map is holomorphic (see section
\ref{holonomy}). Considering the induced conformal structure on $\partial \hat{M}$, define the map $\psi = p \circ \varphi$ as in the following diagram:
$$\xymatrix{{\cal HC}(M) \ar[r]^\varphi \ar[rd]^{\psi} & {\cal CP}(\partial \hat{M}) \ar[d]^p \\
& {\cal T}(\partial \hat{M})~.}$$
The powerful theorem mainly due to Ahlfors, Bers, Kra, Marden, Maskit, Sullivan and Thurson\footnote{see \cite{CanaryMC} chapter 7. for a detailed exposition
of this theorem,
containing in particular the description of the different contributions of the several authors.
A non-exhaustive list of references includes \cite{abg}, 
\cite{ahlforsg}, \cite{bersg}, \cite{krag}, \cite{mardeng}, \cite{maskitg}, \cite{sullivang}.}
says in this context that:
\begin{theorem}[Ahlfors, Bers, Kra, Marden, Maskit, Sullivan, Thurston]
The map $\psi : {\cal HC}(M) \rightarrow {\cal T}(\partial \hat{M})$ is bijective.
\end{theorem}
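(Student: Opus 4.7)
The plan is to prove bijectivity by treating injectivity and surjectivity separately, each via a well-identified deep ingredient from Kleinian group theory.

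For injectivity, I would invoke Sullivan's rigidity theorem for convex cocompact Kleinian groups (together with Marden's isomorphism theorem). Suppose $[g_1],[g_2]\in{\cal HC}(M)$ satisfy $\psi([g_1])=\psi([g_2])$. Passing to the universal covers gives Kleinian groups $\Gamma_1,\Gamma_2$ with domains of discontinuity $\Omega_1,\Omega_2\subset\CP$, and the markings identify $\Omega_1/\Gamma_1$ with $\Omega_2/\Gamma_2$ as marked Riemann surfaces. This conformal identification lifts to a conformal map $\Omega_1\to\Omega_2$ equivariant with respect to an isomorphism $\Gamma_1\cong\Gamma_2$. Sullivan's rigidity (the limit set of a convex cocompact group carries no $\Gamma$-invariant measurable line field) then forces this conjugation to extend to a Möbius transformation of $\CP$, hence to an orientation-preserving isometry of $\HH^3$ between the two hyperbolic structures. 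Tracking the markings shows this isometry is isotopic to the identity, so $[g_1]=[g_2]$.

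For surjectivity I would proceed in two steps. First, establish that ${\cal HC}(M)$ is non-empty: under the topological hypotheses on $\hat M$ this is exactly Thurston's hyperbolization theorem for Haken manifolds with incompressible boundary. Fix a basepoint $[g_0]$ and write $M_0=\HH^3/\Gamma_0$, with domain of discontinuity $\Omega_0\subset\CP$ and conformal boundary $X_0=\psi([g_0])$. Given any target $X\in{\cal T}(\partial\hat M)$, choose a Beltrami differential $\mu$ on $X_0$ whose normalized solution to the Beltrami equation realizes the marked conformal structure $X$. Lift $\mu$ to a $\Gamma_0$-invariant Beltrami differential on $\Omega_0$, extend by zero on the limit set $\Lambda_0$, and apply the Ahlfors--Bers measurable Riemann mapping theorem to obtain a quasiconformal homeomorphism $w^\mu:\CP\to\CP$. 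Then $\Gamma^\mu:=w^\mu\Gamma_0(w^\mu)^{-1}$ is again Kleinian, and $w^\mu$ extends equivariantly to $\HH^3$ as a quasi-isometry (e.g. via the barycentric or Douady--Earle extension), so $\Gamma^\mu$ is still convex cocompact with $\HH^3/\Gamma^\mu$ diffeomorphic to $M$. This yields a point $[g^\mu]\in{\cal HC}(M)$ with $\psi([g^\mu])=X$ by construction.

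The main obstacle is plainly the surjectivity basepoint: the existence of any convex cocompact hyperbolic structure on $M$ is Thurston's hyperbolization theorem, a very substantial piece of $3$-manifold topology which is where the contributions of Thurston (and indirectly Marden--Sullivan, for the characterization of ${\cal HC}(M)$ as a component of the interior of the discrete faithful locus) truly enter. The injectivity step is also nontrivial but by now classical, hinging on the ergodic theory of the $\Gamma$-action on $\Lambda$. Once hyperbolization is granted, the quasiconformal deformation argument for surjectivity is a fairly direct application of the measurable Riemann mapping theorem, using that convex cocompactness is preserved under quasiconformal conjugation of Kleinian groups.
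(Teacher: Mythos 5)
The paper does not prove this theorem: it is quoted as a known deep result, with a pointer to \cite{CanaryMC} (chapter 7) and to the original papers of Ahlfors, Bers, Kra, Marden, Maskit, Sullivan and Thurston, so there is no in-paper argument to compare yours against. Your outline is, however, the standard decomposition found in those references, and the division of labour you propose is the right one: surjectivity rests on Thurston's hyperbolization of Haken manifolds to produce a basepoint of ${\cal HC}(M)$ (under the stated hypotheses --- irreducible, atoroidal, nonempty incompressible boundary with no tori --- the manifold is Haken and the resulting geometrically finite structure has no cusps, hence is convex cocompact), followed by the Ahlfors--Bers quasiconformal deformation argument; injectivity rests on rigidity on the limit set.

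One step of your injectivity sketch is stated too quickly. An equivariant conformal map $\Omega_1\to\Omega_2$ is only defined on the domains of discontinuity, and Sullivan's rigidity theorem cannot be applied to it directly: rigidity is a statement about a globally defined quasiconformal self-map of $\CP$ conjugating $\Gamma_1$ to $\Gamma_2$ whose Beltrami coefficient is supported on the limit set. To get into that position you must first extend the boundary identification to a global quasiconformal conjugacy of $\CP$ --- this is exactly Marden's quasiconformal stability / isomorphism theorem for convex cocompact groups (equivalently, the fact recalled in the paper that ${\cal HC}(M)$ is a component of the interior of the discrete faithful locus, on which all structures are quasiconformally conjugate). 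Once that is done, the conjugacy can be corrected to be conformal on $\Omega_1$, its dilatation is supported on $\Lambda_1$, and either Sullivan's no-invariant-line-fields theorem or simply the fact that the limit set of a convex cocompact group with $\Omega\neq\emptyset$ has Lebesgue measure zero forces it to be M\"obius; tracking markings then gives an isometry isotopic to the identity. With that insertion your argument is the correct and standard one.
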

Let us mention that this statement has to be slightly modified if $\hat{M}$ has compressible boundary.
As a consequence of this theorem, we get
\begin{propo}
The map \begin{equation}\beta = \varphi \circ \psi^{-1} : {\cal T}(\partial \hat{M}) \to {\cal CP}(\partial \hat{M})\end{equation} is a
holomorphic section to $p : {\cal CP}(\partial \hat{M}) \to {\cal T}(\partial \hat{M})$.
\end{propo}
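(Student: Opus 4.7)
The plan is to verify the two conclusions of the proposition separately. The section property is purely formal: by the definition $\psi = p \circ \varphi$, we obtain
\begin{equation*}
p \circ \beta \;=\; p \circ \varphi \circ \psi^{-1} \;=\; \psi \circ \psi^{-1} \;=\; \mathrm{id}_{{\cal T}(\partial \hat{M})},
\end{equation*}
so $\beta$ is a right inverse of $p$.

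For the holomorphicity, the map $\varphi$ is already known to be holomorphic, as noted in the excerpt (this is a direct consequence of the holomorphicity of the holonomy map, discussed in section \ref{holonomy}). It therefore suffices to show that $\psi^{-1}$ is holomorphic. Now $\psi = p \circ \varphi$ is holomorphic as a composition of holomorphic maps, and it is bijective by the Ahlfors--Bers--Kra--Marden--Maskit--Sullivan--Thurston theorem stated above. The plan is then to invoke the standard fact that a holomorphic bijection between complex manifolds of the same finite dimension is automatically a biholomorphism; in practice I would check that $d\psi$ is an isomorphism at every point and conclude via the holomorphic inverse function theorem. Holomorphicity of $\beta = \varphi \circ \psi^{-1}$ then follows immediately.

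The main (and essentially only) technical obstacle is the dimension count $\dim_\C {\cal HC}(M) = \dim_\C {\cal T}(\partial \hat{M})$. The right-hand side equals $\sum_{k=1}^N (3g_k - 3)$ by the standard description of Teichm\"uller space. For the left-hand side, I would use that ${\cal HC}(M)$ is open in the character variety ${\cal X}(\pi_1(M),\PSL_2(\C))$ (as mentioned in the excerpt via Marden and Sullivan) and compute its complex dimension via the standard tangent description in terms of twisted group cohomology, combined with Poincar\'e--Lefschetz duality for the pair $(\hat{M},\partial \hat{M})$ and the Euler characteristic relation $\chi(\partial \hat{M}) = 2\chi(\hat{M})$. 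Once the dimensions match, the argument is complete. Alternatively, one could bypass the global dimension count by invoking known infinitesimal rigidity results for convex cocompact hyperbolic $3$-manifolds, which directly yield injectivity of $d\psi$ on a space of the appropriate dimension.
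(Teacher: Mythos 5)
Your argument is correct and is essentially the one the paper intends: the proposition is stated there without proof as an immediate consequence of the Ahlfors--Bers--et al.\ theorem, the section identity being formal and the holomorphicity of $\beta$ reducing to the fact that the holomorphic bijection $\psi$ is a biholomorphism. Your filling-in of the only nontrivial point --- that $\psi^{-1}$ is holomorphic, via the equality $\dim_\C {\cal HC}(M) = \dim_\C {\cal T}(\partial \hat{M})$ (half-lives-half-dies from Poincar\'e--Lefschetz duality) and the openness of ${\cal HC}(M)$ in the character variety --- is exactly the right supplement.
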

We call $\beta$ the \emph{(generalized) simultaneous uniformization section}. \label{betasimul}
This map allows us to define ``generalized Bers sections'' and ``generalized Bers embeddings'' by letting only one of the boundary components' conformal
structure vary and by looking at the resulting complex projective structure on some other (or the same) boundary component.
This idea is made precise as follows.
If an index $j \in \{1,\dots,N\}$ and marked complex structures $X_i \in {\cal T}(S_i)$ are fixed for all $i \neq j$, 
we denote by $\iota_{(X_i)}$ the 
injection
\begin{equation} \label{caninj}
\iota_{(X_i)} : 
\begin{array}{ccl}
{\cal T}(S_j) & \rightarrow & {\cal T}(\partial \hat{M})\\
  X & \mapsto & (X_1, \dots, X_{j-1},X,X_{j+1}, \dots, X_N)~.
\end{array}
\end{equation}
Let $f_{(X_i),k} = \pr_k \circ \beta \circ \iota_{(X_i)}$ as in the following diagram:
\begin{equation}
\xymatrix{
{\cal T}(\partial \hat{M}) \ar[rr]^\beta & & {\cal CP}(\partial \hat{M}) \ar[d]^{\pr_k}\\
{\cal T}(S_j) \ar[u]^{\iota_{(X_i)}}\ar[rr]^{f_{(X_i),k}} & & {\cal CP}(S_k)~.
}\end{equation}
If $j = k$, then $\sigma_{(X_i)}:=f_{(X_i),j}$ is a holomorphic section to $p_j : {\cal CP}(S_j) \to {\cal T}(S_j)$, that we call a \emph{generalized Bers
section}. On the other hand, if $j \neq k$, then $f_{(X_i),k}$ maps ${\cal T}(S_j)$ in the affine\footnote{see section \ref{affinebundle}.}
fiber $P(X_k) \subset {\cal CP}(S_k)$, we call a $f_{(X_i),k}$ a \emph{generalized Bers embedding}. We apologize for this misleading
terminology: a ``generalized Bers embedding'' is \emph{not} an embedding in general.\label{genbers}

Note that quasi-Fuchsian structures discussed in the previous paragraph just correspond to the case where $M = S \times \R$. 
Let us also mention that this discussion is easily
adapted when $\partial M$ contains tori or is no longer assumed incompressible, with a few precautions. When $\hat{M}$ only has one boundary component,
this gives the notion of a \emph{Schottky section}.
\section{The cotangent symplectic structures}\label{cotangentst}

\subsection{\texorpdfstring{$\CPS$ as an affine holomorphic bundle over $\TS$}{{CP(S)} as an affine holomorphic bundle over {T(S)}}} \label{affinebundle}

\subsubsection*{The Schwarzian derivative}

Given a locally injective holomorphic function $f: Z_1 \rightarrow Z_2$ where $Z_1$ and $Z_2$ are complex projective surfaces, 
define the \emph{osculating map} $\tilde{f}$ to $f$ at a point $m\in Z_1$
as the germ of a (locally defined) projective map that has the best possible contact with $f$ at $m$. In some sense, one can take
a flat covariant derivative $\nabla \tilde{f}$ and identify it as holomorphic quadratic differential ${\cal S}f \in Q(X)$, 
called the \emph{Schwarzian derivative} \label{defschwarzian} of $f$.
We refer to \cite{anderson} and \cite{dumas} for details.

In local projective charts, the Schwarzian derivative of $f$ has the classical expression 
${\cal S}f = Sf(z) dz^2$, where 
\begin{equation*}Sf(z) = \frac{f'''(z)}{f'(z)} - \frac{3}{2}{\left(\frac{f''(z)}{f'(z)}\right)}^2~.\end{equation*}

As a consequence of the definition, the Schwarzian operator enjoys the following properties:
\begin{propo}~\label{wwq}
\begin{itemize}
 \item[$\bullet$] If $f$ is a projective map, then ${\cal S}f = 0$ (the converse is also true).
 \item[$\bullet$] If $f: Z_1 \rightarrow Z_2$ and $g: Z_2 \rightarrow Z_3$ are locally injective holomorphic functions between complex projective surfaces,
 then \begin{equation*}{\cal S}(g\circ f) = {\cal S}(f) + f^* {\cal S}(g)~.\end{equation*}
\end{itemize}
\end{propo}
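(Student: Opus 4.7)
The plan is to prove both bullets by exploiting the intrinsic description of $\mathcal{S}f$ as the flat covariant derivative of the osculating map $\tilde{f}$, with a concrete local-coordinate computation as a cross-check.

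For the first bullet, the key observation is that if $f : Z_1 \to Z_2$ is itself projective, then at every point $m \in Z_1$ the germ of $f$ at $m$ is already a projective germ, so the best-contact projective germ $\tilde{f}_m$ coincides with $f$ itself. In particular the assignment $m \mapsto \tilde{f}_m$ is constant (as a section of the bundle of projective germs along $f$), so its flat covariant derivative vanishes and $\mathcal{S}f = 0$. Conversely, if $\mathcal{S}f = 0$ then $\nabla \tilde{f} \equiv 0$, which means the germ $\tilde{f}_m$ is locally independent of $m$; so on a neighborhood of each point, $f$ agrees with a fixed projective map. By connectedness of $Z_1$ and the fact that two projective germs agreeing on an open set are equal, these local projective maps patch together and $f$ is globally projective. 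As a sanity check, one may instead verify vanishing on Möbius transformations $z \mapsto (az+b)/(cz+d)$ directly from the local formula $Sf = f'''/f' - \tfrac{3}{2}(f''/f')^2$, and use the fact that the third-order ODE $Sf = 0$ has a three-dimensional solution space, matching $\dim \PSL_2(\C)$.

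For the second bullet, the cocycle identity should also be read off from the osculating maps. The germ $\widetilde{g \circ f}_m$ is characterized as the projective germ of highest-order contact with $g \circ f$ at $m$, and uniqueness together with the ordinary chain rule shows that
\begin{equation*}
\widetilde{g \circ f}_m \;=\; \tilde{g}_{f(m)} \circ \tilde{f}_m .
\end{equation*}
Applying the flat covariant derivative and using that $\nabla$ distributes over composition of projective germs (the first factor contributes $\nabla \tilde g$ evaluated along $f$, pulled back by $f$; the second contributes $\nabla \tilde f$ directly), one gets
\begin{equation*}
\nabla \widetilde{g\circ f} \;=\; \nabla \tilde f \;+\; f^{*}\, \nabla \tilde g ,
\end{equation*}
which under the identification $\nabla \tilde{h} \leftrightarrow \mathcal{S}h \in Q(\cdot)$ is the desired formula.

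The main obstacle is making the intrinsic argument above rigorous, since ``the flat covariant derivative of a germ-valued map distributes over composition'' must be justified by carefully unwinding the identification between $\nabla \tilde{h}$ and a quadratic differential. A shortcut which I would use as a safety net is the direct computation in projective coordinates: the local formula gives
\begin{equation*}
S(g \circ f)(z) \;=\; Sg(f(z))\, f'(z)^{2} \;+\; Sf(z),
\end{equation*}
and since $f^{*}(\psi(w)\, dw^{2}) = \psi(f(z))\, f'(z)^{2}\, dz^{2}$, the right-hand side is exactly the local expression of $\mathcal{S}f + f^{*}\mathcal{S}g$. Independence of the chosen projective charts then follows from the first bullet, which ensures that changing projective coordinates adds nothing to $\mathcal{S}$, so the local identity globalizes to the claimed tensorial equality.
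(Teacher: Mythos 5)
Your proposal is correct, but note that the paper itself offers no proof of this proposition: it is stated as ``a consequence of the definition'' with the details deferred to the cited references (Anderson, Dumas). So there is nothing in the paper to compare against line by line; what you have written is essentially the standard argument those references contain. Your intrinsic route via osculating germs is the conceptually right one, and the identity $\widetilde{g\circ f}_m = \tilde{g}_{f(m)}\circ \tilde{f}_m$ (by uniqueness of the best-contact projective germ plus the chain rule) is the key point. The one genuinely soft step is your assertion that the flat covariant derivative ``distributes over composition'' of germ-valued sections; as you yourself note, this requires unwinding the identification of $\nabla\tilde h$ with a quadratic differential, and as stated it is a claim rather than a proof. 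However, your safety net closes this gap completely: the computation $S(g\circ f)(z) = Sg(f(z))\,f'(z)^2 + Sf(z)$ in projective charts is correct and elementary, the paper itself records that $\mathcal{S}f$ has the classical local expression $Sf(z)\,dz^2$ in projective charts, and the transformation rule $f^*(\psi\,dw^2)=\psi(f(z))f'(z)^2\,dz^2$ matches the right-hand side, so the tensorial identity follows once chart-independence is in hand. Your observation that chart-independence is itself a consequence of the cocycle identity applied to Möbius coordinate changes together with the vanishing of $S$ on Möbius maps is the standard way to bootstrap this, and the logic is not circular since $S(\text{M\"obius})=0$ is verified by direct computation in a single chart. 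The converse in the first bullet is also fine: solving $Sf=0$ locally forces $f$ to be a Möbius germ, and since being a projective map is a local condition, no global patching is actually needed (your connectedness remark is harmless but superfluous).
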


The Schwarzian derivative also satisfies an existence theorem: 
\begin{propo}\label{sct}
 If $U \subset \C$ is simply connected and $\varphi \in Q(U)$, then ${\cal S}f = \varphi$ can be solved for $f : U \rightarrow \CP$.
\end{propo}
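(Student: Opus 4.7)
The plan is to reduce the nonlinear equation $\mathcal{S}f = \varphi$ to a linear second-order ODE whose solvability on a simply connected domain is classical. Write $\varphi = \phi(z)\,dz^{2}$ with $\phi$ holomorphic on $U$, and consider the linear equation
\begin{equation*}
u'' + \tfrac{1}{2}\phi(z)\, u = 0.
\end{equation*}
On the simply connected open set $U \subset \C$, standard holomorphic ODE theory (analytic continuation of local power-series solutions around the base point, using simple connectedness of $U$ to rule out monodromy) produces a two-dimensional $\C$-vector space of holomorphic solutions on all of $U$.

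Pick any two linearly independent holomorphic solutions $u_{1}, u_{2}$ and set $f = u_{1}/u_{2}$. Since $u_{2}$ is holomorphic on $U$ and not identically zero, $f$ is a well-defined meromorphic function on $U$, which is exactly the data of a holomorphic map $f : U \to \CP$. A direct computation, which I would carry out once in local coordinates, confirms $\mathcal{S}f = \varphi$: the Wronskian $W = u_{1}u_{2}' - u_{1}'u_{2}$ is constant (the coefficient of $u'$ in the ODE vanishes), so $f' = -W/u_{2}^{2}$ and hence $f''/f' = -2\,u_{2}'/u_{2}$. Differentiating and using $u_{2}'' = -\tfrac{1}{2}\phi\, u_{2}$ gives
\begin{equation*}
Sf = \bigl(f''/f'\bigr)' - \tfrac{1}{2}\bigl(f''/f'\bigr)^{2} = \phi,
\end{equation*}
proving the claim.

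No single step is genuinely hard: the crux is the linearization trick $f = u_{1}/u_{2}$, which replaces a nonlinear third-order Schwarzian equation with a tame linear ODE. The only subtle ingredient is the global existence of holomorphic solutions on $U$, where simple connectedness is used to analytically continue local solutions without monodromy. One could alternatively invoke the composition formula of Proposition \ref{wwq} together with the special case $U$ a disk to bootstrap to general simply connected $U$ via a uniformization argument, but the direct linear-ODE approach is cleaner and avoids circularity.
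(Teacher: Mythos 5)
Your proof is correct and is essentially the argument the paper defers to: the paper gives no proof of Proposition \ref{sct} itself, citing instead the ``elementary and constructive proof'' in \cite{dumas}, which is exactly this linearization $f = u_1/u_2$ of $u'' + \tfrac{1}{2}\phi u = 0$ on a simply connected domain. The only detail worth adding is that $f$ is locally injective even at the (simple) zeros of $u_2$, since $u_2$ and $u_2'$ cannot vanish simultaneously and the nonvanishing Wronskian forces $u_1 \neq 0$ there, so $f$ has a simple pole and remains an immersion into $\CP$.
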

An elementary and constructive proof of this fact is given in e.g. \cite{dumas}, see also \cite{anderson} for a more abstract argument.

\subsubsection*{Schwarzian parametrization of a fiber} \label{schwarzian}

Recall that there is a holomorphic ``forgetful'' map $p : \CPS \rightarrow \TS$.
Let $X$ be a fixed point in $\TS$ and $P(X) := p^{-1}(\{X\})$ the set of marked projective structures on $S$ whose underlying
complex structure is $X$.

Given $Z_1$, $Z_2 \in P(X)$, the identity map $\mathrm{id}_S : Z_1 \rightarrow Z_2$ is holomorphic but not projective if $Z_1 \neq Z_2$. 
Taking
its Schwarzian derivative accurately measures the ``difference'' of the two projective structures $Z_1$ and $Z_2$.
Let us make this observation more precise. A consequence of Proposition \ref{sct} is that given $Z_1\in P(X)$ and $\varphi\in Q(X)$,
there exists $Z_2 \in P(X)$ such that ${\cal S}\left(\mathrm{id}_S : Z_1 \rightarrow Z_2\right) = \varphi$. This defines a map $Q(X) \times P(X) \rightarrow P(X)$,
which is now easily seen to be a freely transitive action of $Q(X)$ on $P(X)$ as a consequence of Proposition \ref{wwq}.
In other words, $P(X)$ is equipped with a complex affine structure, modeled on the vector space $Q(X)$.

Recall that $Q(X)$ is also identified with the complex dual space $T_X^*\TS$. As a result of this discussion, 
$\CPS$ is an affine holomorphic bundle modeled on the holomorphic cotangent vector bundle $T^* \TS$.

Consequently, $\CPS$ can be identified with $T^* \TS$ by choosing a ``zero section'' $\sigma : \TS \to \CPS$.
Explicitly, we get an isomorphism of complex affine bundles $\tau^{\sigma} : Z \mapsto Z - \sigma\left(p(Z)\right)$ 
as in the following diagram:

\begin{equation}\label{tautau}\xymatrix{
\CPS \ar[rd]_{p} \ar[rr]^{\tau^\sigma} & & T^* \TS \ar[ld]^{\pi} \\
 & \TS~. &
}\end{equation}
$\tau^\sigma$ is characterized by the fact that $\tau^\sigma \circ \sigma$ is the zero section to $\pi : T^* \TS \to \TS$.
It is an isomorphism of holomorphic bundles whenever $\sigma$ is a holomorphic section to $p$,
such as a (generalized) Bers section (see sections \ref{berssection} and \ref{genbers}).

\subsection{\texorpdfstring{Complex symplectic structure on $T^*\TS$}{Complex symplectic structure on T*T(S)}}\label{compsympcot}

It is a basic fact that if $M$ is any complex manifold (in particular when $M=\TS$), the total space of its 
holomorphic cotangent bundle $T^*M$
\footnote{In this context $T^*M$ stands for the complex dual of the holomorphic tangent bundle $T^{1,0}M$, its (smooth) sections 
are the $(1,0)$-forms.} is equipped
with a canonical complex symplectic structure. We briefly recall this and a few useful properties.

The \emph{canonical $1$-form $\xi$} \label{defcanform} is the holomorphic $(1,0)$-form on $T^*M$ defined at a point $\varphi \in T^*M$ by 
${\xi}_\varphi := \pi^* \varphi$, where $\pi : T^*M \to M$ is the canonical projection and $\varphi$ is seen as a complex covector on $M$
in the right-hand side of the equality. The \emph{canonical complex symplectic form} on $T^*M$ is then simply defined by 
$\omega_{\mathrm{can}} = d\xi$
\footnote{Note that some authors might take the opposite sign convention for $\omega_{\mathrm{can}}$.}.
If $(z_k)$ is a system of holomorphic coordinates on $M$ so that an arbitrary $(1,0)$-form has an expression of the form
$\alpha = \sum w_k dz_k$, then $(z_k,w_k)$ is a system of holomorphic coordinates on $T^*M$ for which
$\xi = \sum w_k dz_k$ and $\omega_{\mathrm{can}} = \sum dw_k \wedge dz_k$.

The canonical $1$-form satisfies the following reproducing property. If $\alpha$ is any $(1,0)$-form on $M$, it is in particular
a map $M \to T^*M$ and as such it can be used to pull back differential forms from $T^*M$ to $M$. It is then not hard to show that 
$\alpha^* \xi = \alpha \label{reprod}$
and as a consequence $\alpha^* \omega_{\mathrm{can}} = d\alpha\label{reprod2}$.

Note that if $u$ is a vertical tangent vector to $T^*M$, {\it i.e.} $\pi_* u = 0$, then $u$ can be identified with an element 
of the fiber containing
its base point $\alpha$ (since the fibers of the projection are vector spaces).
Under that identification, the symplectic pairing between $u$ and any other tangent vector $v \in T_\alpha T^* M$ is just given by 
$\omega_{\mathrm{can}}(u,v) = \langle u,\pi_*v \rangle$ where $\langle\cdot,\cdot\rangle$ is the duality pairing
on $T_{\pi(\alpha)} M$.

Note that the fibers of the projection $\pi : T^*M \rightarrow M$ are Lagrangian
submanifolds of $T^*M$, in other words $\pi$ is a Lagrangian fibration. Also, the zero section $s_0 : M \hookrightarrow T^*M$ \label{defzerosection} is a 
Lagrangian embedding. These are direct consequences of the previous observation.

\subsection{Affine cotangent symplectic structures}\label{affid}

As we have seen in section \ref{schwarzian}, any choice of a ``zero section'' $\sigma : \TS \to \CPS$ yields an affine isomorphism 
$\tau^{\sigma} : \CPS \stackrel{\sim}{\to} T^* \TS$. We can use this to pull back the canonical symplectic structure of $T^*\TS$ on $\CPS$:
define \begin{equation}\omega^\sigma := (\tau^{\sigma})^* \omega_{\mathrm{can}}~.\end{equation}
It is clear that $\omega^\sigma$ is a complex symplectic form on $\CPS$ whenever $\sigma$ is a holomorphic section to $p$. Otherwise, it is just
a complex-valued non-degenerate $2$-form on $\CPS$, whose real and imaginary parts are both real symplectic forms.

How is $\omega^\sigma$ affected by the choice of the ``zero section'' $\sigma$? The following statement is both straightforward and key:
\begin{propo}\label{craca}
 For any two sections $\sigma_1$ and $\sigma_2$ to $p: \CPS \to \TS$,
\begin{equation}\omega^{\sigma_2} - \omega^{\sigma_1} = - p^* d(\sigma_2 - \sigma_1)\end{equation}
\end{propo}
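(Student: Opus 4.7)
The plan is to reduce the claim to a computation of how the canonical symplectic form on $T^*\TS$ transforms under a fiber-wise translation by a $(1,0)$-form on $\TS$. First, I would observe that the difference $\alpha := \sigma_2 - \sigma_1$ is well-defined: at each $X \in \TS$, the fiber $P(X) \subset \CPS$ is an affine space modeled on $Q(X) = T^*_X \TS$, so $\sigma_2(X) - \sigma_1(X) \in T^*_X \TS$ and $\alpha$ is a (smooth) $(1,0)$-form on $\TS$, i.e., a section of $T^*\TS$.

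The key algebraic identity is then that the two affine trivializations differ by a translation: if $T_\alpha : T^*\TS \to T^*\TS$ denotes the fiber-wise translation $\beta \mapsto \beta + \alpha(\pi(\beta))$, then, directly from the definition $\tau^{\sigma_i}(Z) = Z - \sigma_i(p(Z))$, one checks that $\tau^{\sigma_1} = T_\alpha \circ \tau^{\sigma_2}$. So it suffices to compute $T_\alpha^* \omega_{\mathrm{can}}$.

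Next I would compute $T_\alpha^* \xi$, where $\xi$ is the canonical $1$-form on $T^*\TS$. Since $\pi \circ T_\alpha = \pi$, one has $\pi_* \circ (dT_\alpha) = \pi_*$, and the defining formula $\xi_\beta = \pi^* \beta$ gives at any $\beta \in T^*\TS$ with $\pi(\beta) = X$ and $v \in T_\beta(T^*\TS)$:
\begin{equation*}
(T_\alpha^* \xi)_\beta(v) = \xi_{T_\alpha(\beta)}\bigl(dT_\alpha(v)\bigr) = \bigl(\beta + \alpha(X)\bigr)(\pi_* v) = \xi_\beta(v) + (\pi^*\alpha)_\beta(v),
\end{equation*}
so $T_\alpha^* \xi = \xi + \pi^* \alpha$, and taking $d$ yields $T_\alpha^* \omega_{\mathrm{can}} = \omega_{\mathrm{can}} + \pi^* d\alpha$.

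Finally, pulling back by $\tau^{\sigma_2}$ and using $\pi \circ \tau^{\sigma_2} = p$ from diagram \eqref{tautau} gives
\begin{equation*}
\omega^{\sigma_1} = (\tau^{\sigma_2})^* T_\alpha^* \omega_{\mathrm{can}} = \omega^{\sigma_2} + p^* d\alpha = \omega^{\sigma_2} + p^* d(\sigma_2 - \sigma_1),
\end{equation*}
which rearranges to the stated identity. There is no real obstacle here; the only point demanding care is the bookkeeping that $\sigma_2 - \sigma_1$ genuinely lives in $T^*\TS$ (so that $d(\sigma_2 - \sigma_1)$ and $\pi^*$ of it make sense) and the sign that arises from writing $\tau^{\sigma_1}$ rather than $\tau^{\sigma_2}$ on the left above, which is the origin of the minus sign in the statement.
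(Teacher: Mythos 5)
Your proof is correct, and the signs all come out right. It is at heart the same computation as the paper's, but organized differently in a way worth noting. The paper starts from $-p^*d(\sigma_2-\sigma_1)$, rewrites $d(\sigma_2-\sigma_1)$ as $(\sigma_2-\sigma_1)^*\omega_{\mathrm{can}}$ via the reproducing property of the canonical $1$-form, and then manipulates ``differences of maps into $T^*\TS$'' under pullback, ending with the identity $\left(\tau^{\sigma_2}-\tau^{\sigma_1}\right)^*\omega_{\mathrm{can}} = \left(\tau^{\sigma_2}\right)^*\omega_{\mathrm{can}} - \left(\tau^{\sigma_1}\right)^*\omega_{\mathrm{can}}$, a step the author explicitly flags as ``not so trivial'' because one must keep track of base points (it rests on the fact that for maps $f,g$ into $T^*\TS$ covering the same map to $\TS$, $(f+g)^*\xi = f^*\xi + g^*\xi$). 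You instead factor $\tau^{\sigma_1} = T_\alpha\circ\tau^{\sigma_2}$ through the fiberwise translation $T_\alpha$ and compute $T_\alpha^*\xi = \xi + \pi^*\alpha$ directly from $\pi\circ T_\alpha = \pi$; this is exactly the justification the paper's delicate step needs, made explicit, and it bypasses the reproducing property altogether. What you lose is a two-line derivation; what you gain is that every step is an honest pullback of a globally defined map, with the only affine subtlety isolated in the single observation that $\sigma_2-\sigma_1$ is a genuine $(1,0)$-form.
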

\noindent where $\sigma_2 - \sigma_1$ is the ``affine difference'' between $\sigma_2$ and $\sigma_1$, it is a $1$-form on $\TS$. In particular, the symplectic
structures induced by the respective choices of two sections agree if and only if their affine difference is a closed $1$-form.

\begin{proof}
 This is an easy computation:
\begin{eqnarray*}
  - p^* d\left(\sigma_2 - \sigma_1\right) & = & -p^* \left((\sigma_2 - \sigma_1)^*\omega_{\mathrm{can}}\right) ~~ \textrm{(see (\ref{reprod2}))}\\
  & = & \left(-(\sigma_2 - \sigma_1)\circ p\right)^*\omega_{\mathrm{can}}\\
  & = & \left(\tau^\sigma_2 - \tau^\sigma_1\right)^*\omega_{\mathrm{can}}\\
  & = & \left(\tau^{\sigma_2}\right)^*\omega_{\mathrm{can}} - \left(\tau^{\sigma_1}\right)^*\omega_{\mathrm{can}}~.
\end{eqnarray*}
Only the last step is not so trivial as it would seem because one has to be careful about base points.
Also, note that in the identity 
$$\tau^{\sigma_2}(Z) - \tau^{\sigma_1}(Z) = \left(Z - \sigma_2 \circ p (Z)\right) - \left(Z - \sigma_1 \circ p (Z)\right) = - (\sigma_2 - \sigma_1)\circ p(Z)~,$$
some minus signs are ``affine'' ones (hiding the Schwarzian derivative) and others are ``genuine'' minus signs, 
but this can be ignored in computation.
\end{proof}

Moreover, a straightforward calculation gives an explicit expression of $\omega^\sigma(u,v)$ whenever $u$ is a vertical tangent vector to $\CPS$, 
it is exactly the same as the one obtained for the symplectic structure on $T^*\TS$:

\begin{propo}\label{vertg}
Let $\sigma : \TS \to \CPS$ be a section to $p$. Let $Z$ be a point in $\CPS$, and $u$, $v$ be tangent vectors at $Z$ such that $u$ is vertical, {\it i.e.}
$p_* u= 0$. Then  \begin{equation}\omega^\sigma(u,v) = \langle u,p_*v \rangle~.\end{equation}
\end{propo}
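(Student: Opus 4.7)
The plan is to unwind the definition $\omega^\sigma = (\tau^\sigma)^* \omega_{\mathrm{can}}$ and then apply the reproducing property of $\omega_{\mathrm{can}}$ recalled at the end of section \ref{compsympcot}: if $\tilde u \in T_\alpha T^*\TS$ is vertical (i.e.\ $\pi_* \tilde u = 0$), then for every $\tilde v \in T_\alpha T^*\TS$ one has $\omega_{\mathrm{can}}(\tilde u,\tilde v) = \langle \tilde u, \pi_* \tilde v\rangle$, with $\tilde u$ viewed as an element of the fiber $T^*_{\pi(\alpha)}\TS$ under the canonical identification of the tangent space to a vector space with the vector space itself. So my computation will take the form
\begin{equation*}
\omega^\sigma(u,v) = \omega_{\mathrm{can}}(\tau^\sigma_* u, \tau^\sigma_* v) = \langle \tau^\sigma_* u, \pi_* \tau^\sigma_* v\rangle,
\end{equation*}
and the task is to identify the two factors on the right with $u$ and $p_* v$ respectively.

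For the second factor this is immediate: the diagram (\ref{tautau}) says $\pi \circ \tau^\sigma = p$, so $\pi_* \tau^\sigma_* v = p_* v$. For the first factor I need to check that $\tau^\sigma$ sends vertical vectors to vertical vectors and that under the natural identifications $\tau^\sigma_* u$ equals $u$. Verticality is clear because $\tau^\sigma$ is a bundle map over the identity on $\TS$, hence it maps the fiber $P(X)$ to the fiber $T^*_X \TS$; differentiating along the fiber gives a map between their tangent spaces at corresponding points. But $\tau^\sigma|_{P(X)} \colon P(X) \to T^*_X \TS = Q(X)$ is precisely the affine map $Z \mapsto Z - \sigma(X)$ (the Schwarzian difference), so its differential is the canonical identification of the tangent space of an affine space with its underlying vector space. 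Under the identifications $T_Z P(X) \cong Q(X)$ and $T_{\tau^\sigma(Z)} T^*_X\TS \cong Q(X)$, the differential of $\tau^\sigma$ along the fiber is just the identity of $Q(X)$. Hence $\tau^\sigma_* u$, viewed as an element of $T^*_X \TS$, is exactly $u$ viewed as such.

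Combining these two identifications gives $\omega^\sigma(u,v) = \langle u, p_* v\rangle$, which is the desired formula. The one subtle point — and essentially the only place where anything could go wrong — is the bookkeeping in the previous paragraph: keeping straight the three different meanings of ``$u$'' as a tangent vector to $\CPS$, a tangent vector to the affine fiber $P(X)$, and an element of the vector space $Q(X) = T^*_X \TS$. This is exactly parallel to the analogous identification already used implicitly in section \ref{compsympcot} for the canonical symplectic form on $T^*\TS$ itself, and once it is made explicit, the proof is a one-line verification.
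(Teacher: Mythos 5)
Your proof is correct and is precisely the ``straightforward calculation'' the paper alludes to without writing out: pull back by $\tau^\sigma$, use the vertical-vector formula for $\omega_{\mathrm{can}}$ from section \ref{compsympcot} together with $\pi\circ\tau^\sigma=p$, and observe that the fiberwise differential of $\tau^\sigma$ is the canonical identification $T_ZP(X)\cong Q(X)\cong T_X^*\TS$. Nothing is missing; the bookkeeping point you flag is indeed the only subtlety, and you handle it correctly.
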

In this expression, $u$ is seen as an element of $\in {T_X}^* T(S)$ (where $X = p(Z)$) under the identification $T_Z P(X) = Q(X) = {T_X}^* T(S)$.
Note that this expression not involving $\sigma$ is compatible with the previous proposition, which implies that 
$\omega^{\sigma_2} - \omega^{\sigma_1}$ is a horizontal $2$-form.

As a consequence, just like in the cotangent space, we have:
\begin{propo}
Let $\sigma : \TS \to \CPS$ be any section. The projection $p : \CPS \to \TS$ is a Lagrangian fibration for $\omega^\sigma$.
Also, $\sigma$ is a Lagrangian embedding.
\end{propo}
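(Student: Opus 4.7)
The plan is to transfer the two Lagrangian properties from the cotangent bundle $T^*\TS$ to $\CPS$ via the affine isomorphism $\tau^\sigma$, which by construction satisfies $\omega^\sigma = (\tau^\sigma)^* \omega_{\mathrm{can}}$ and fits into the commutative diagram \eqref{tautau}, namely $\pi \circ \tau^\sigma = p$ and $\tau^\sigma \circ \sigma = s_0$ (the zero section of $T^*\TS$). The analogous statements for $\pi : T^*\TS \to \TS$ and $s_0$ were already noted at the end of section \ref{compsympcot}, so the argument is essentially a pullback.

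For the fibration claim, I would first observe that $\tau^\sigma$ is a fiber-preserving diffeomorphism, so it restricts on each fiber $P(X) \subset \CPS$ to a diffeomorphism onto the fiber $T_X^*\TS \subset T^*\TS$. Since $\omega_{\mathrm{can}}$ vanishes on fibers of $\pi$ and $\omega^\sigma$ is its pullback, $\omega^\sigma$ vanishes on fibers of $p$; as the fibers are half-dimensional (both $P(X)$ and $\TS$ have complex dimension $3g-3$), they are Lagrangian. Alternatively, one may invoke Proposition \ref{vertg} directly: for vertical vectors $u,v$ at $Z$, one has $\omega^\sigma(u,v) = \langle u, p_* v\rangle = 0$, which gives isotropy with no reference to $\tau^\sigma$ and no dependence on $\sigma$, matching the remark after Proposition \ref{vertg}.

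For the embedding claim, the key identity is $\tau^\sigma \circ \sigma = s_0$, which is the defining property of $\tau^\sigma$. Then
\begin{equation*}
\sigma^* \omega^\sigma = \sigma^* (\tau^\sigma)^* \omega_{\mathrm{can}} = (\tau^\sigma \circ \sigma)^* \omega_{\mathrm{can}} = s_0^* \omega_{\mathrm{can}} = 0,
\end{equation*}
using the reproducing property (cf.\ the identity $\alpha^*\omega_{\mathrm{can}} = d\alpha$ applied to $\alpha = 0$). Since $\sigma$ is an embedding of $\TS$, which has the correct half-dimension, $\sigma(\TS)$ is Lagrangian.

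There is essentially no obstacle here beyond bookkeeping: the only subtlety is that when $\sigma$ is not holomorphic, $\omega^\sigma$ is merely a non-degenerate complex-valued $2$-form, but the Lagrangian condition (isotropy plus half-dimensionality) is still meaningful and the argument above goes through unchanged, applied separately to $\mathrm{Re}\,\omega^\sigma$ and $\mathrm{Im}\,\omega^\sigma$ if one prefers a strictly real-symplectic reading.
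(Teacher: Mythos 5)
Your argument is correct and is essentially the paper's own (implicit) proof: the paper deduces this proposition directly from Proposition \ref{vertg} and the corresponding facts for $\left(T^*\TS,\omega_{\mathrm{can}}\right)$ via the identities $\pi\circ\tau^\sigma=p$ and $\tau^\sigma\circ\sigma=s_0$, exactly as you do. Both of your routes for the fibration claim (pullback by $\tau^\sigma$, or the vertical pairing formula) are fine, and your closing remark about the non-holomorphic case is consistent with the paper's conventions.
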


\section{The character variety and Goldman's symplectic structure}\label{charGs}

\subsection{The character variety}\label{charvar}

References for this section include \cite{goldmannature}, \cite{porti}, \cite{Gsl2} and \cite{dumas}.

Let $G = \PSL_2(\C)$ and $\RS$ be the set of group homomorphisms from $\pi := \pi_1(S)$ to $G$.
It has a natural structure of a complex affine algebraic set as follows. Choose a finite presentation 
$\pi = \langle \gamma_1, \dots, \gamma_N ~|~ (r_i)_{i\in I} \rangle$
of $\pi$. Evaluating a representation $\rho\in\RS$ on the generators $\gamma_k$ embeds ${\cal R(S)}$ as an algebraic subset of $G^N$.
This gives $\RS$ an affine structure indeed because of the identification $\PSL_2(\C) \approx \SO_3(\C)$ (given by the adjoint representation of
$\PSL_2(\C)$ on its Lie algebra $\mathfrak{g} = \slC$). One can check that this structure is independent of the presentation.

$G$ acts algebraically on $\RS$ by conjugation. The character variety ${\cal X}(S)$ is defined as the quotient in the sense of invariant theory.
Specifically, the action of $G$ on $\RS$ induces an action on the ring of regular functions $\C[\RS]$. Denote by $\C[\RS]^G$ the ring of invariant
functions, it is finitely generated because $\RS$ is affine and $G$ is reductive.
\begin{lemma}[see e.g. \cite{porti}]
 In fact, it is generated in this case ($G=\PSL_2(\C)$) by a finite number of the complex valued functions on $\RS$ of the form 
$\rho \mapsto \mathrm{tr}^2(\rho(\gamma))$.
\end{lemma}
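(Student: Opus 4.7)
The plan is to reduce to the classical Fricke--Vogt--Procesi generation theorem for $\SL_2(\C)$: for any finitely generated group $\pi$, the invariant ring $\C[\Hom(\pi,\SL_2(\C))]^{\SL_2(\C)}$ is finitely generated by the trace functions $\tau_\gamma : \rho \mapsto \mathrm{tr}(\rho(\gamma))$ for finitely many $\gamma\in\pi$. This standard fact follows from Procesi's theorem on invariants of matrices combined with the two $\SL_2$-specific identities $\mathrm{tr}(AB) + \mathrm{tr}(AB^{-1}) = \mathrm{tr}(A)\mathrm{tr}(B)$ and $\mathrm{tr}(A^2) = \mathrm{tr}(A)^2 - 2$ (Cayley--Hamilton in $\SL_2$), which together reduce the trace of any word in the $\rho(\gamma_i)$ to a polynomial in traces of words of bounded length.

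Next, I would relate the $\SL_2(\C)$ and $\PSL_2(\C) = \SL_2(\C)/\{\pm I\}$ character rings. The central extension $1 \to \{\pm I\} \to \SL_2(\C) \to \PSL_2(\C) \to 1$ induces an action of $H^1(\pi,\Z/2)$ on $\Hom(\pi,\SL_2(\C))$ by twisting $(\epsilon \cdot \rho)(\gamma) = \epsilon(\gamma)\rho(\gamma)$, and the image in $\Hom(\pi,\PSL_2(\C))$ consists precisely of the representations whose obstruction class in $H^2(\pi,\Z/2)$ vanishes. A $G$-invariant regular function on $\RS$ therefore pulls back to an $\SL_2(\C)$-invariant function on $\Hom(\pi,\SL_2(\C))$ that is moreover invariant under this twisting. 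Since $\epsilon \cdot \tau_\gamma = \epsilon(\gamma)\tau_\gamma$, a monomial $\prod \tau_{\gamma_i}^{a_i}$ is $H^1$-invariant iff $\sum a_i [\gamma_i] = 0$ in $H_1(\pi, \Z/2)$. In particular each square $\tau_\gamma^2$ descends to give precisely the function $\rho \mapsto \mathrm{tr}^2(\rho(\gamma))$ of the statement, which is manifestly well-defined on $\PSL_2$-representations since the sign ambiguity in lifting $\rho(\gamma)$ to $\SL_2$ disappears upon squaring.

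The final and most delicate step is to show that every $H^1(\pi,\Z/2)$-invariant polynomial in the $\tau_\gamma$'s can actually be rewritten as a polynomial in squares $\tau_\gamma^2$ alone, rather than in mixed $H^1$-invariant products such as $\tau_\alpha \tau_\beta \tau_{\alpha\beta}$. The basic tools are the Cayley--Hamilton relation $\tau_{\gamma^2} = \tau_\gamma^2 - 2$, which immediately expresses traces of squares as squares of traces; the Fricke commutator identity
\begin{equation*}
\tau_{[\alpha,\beta]} = \tau_\alpha^2 + \tau_\beta^2 + \tau_{\alpha\beta}^2 - \tau_\alpha \tau_\beta \tau_{\alpha\beta} - 2;
\end{equation*}
and the squared fundamental identity
\begin{equation*}
\tau_\alpha^2 \tau_\beta^2 = \tau_{\alpha\beta}^2 + \tau_{\alpha\beta^{-1}}^2 + 2\tau_{\alpha\beta}\tau_{\alpha\beta^{-1}},
\end{equation*}
together with $\tau_{\alpha\beta}\tau_{\alpha\beta^{-1}} = \tau_{[\alpha,\beta]} + \tau_{\beta^2}$, which recursively express mixed products in terms of squares. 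The main obstacle I anticipate is the combinatorial bookkeeping: showing that iterating these reductions terminates and yields an expression purely in $\tau_\gamma^2$ for $\gamma$ in a fixed finite set. Finite generation of $\C[\RS]^G$ itself is automatic since $\RS$ is affine and $G$ is reductive, so the real content of the lemma is that one may choose generators of the specific form $\mathrm{tr}^2$.
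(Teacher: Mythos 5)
The paper offers no proof of this lemma at all: it is quoted from \cite{porti} (Heusener--Porti), where the argument is run directly in the adjoint representation, using $\PSL_2(\C)\cong\SO_3(\C)$ (the very identification the paper uses to put an affine structure on $\RS$) together with Procesi-type invariant theory for $3\times 3$ matrices to generate the invariants by $\rho\mapsto\mathrm{tr}(\mathrm{Ad}\rho(w))$, and then $\SL_2$-identities to reduce to single group elements via $\mathrm{tr}(\mathrm{Ad}A)=\mathrm{tr}^2(A)-1$. Your route through the $\SL_2(\C)$ trace ring is different, and it has a genuine gap: the lift-and-twist reduction only sees representations whose obstruction class in $H^2(\pi,\Z/2)\cong\Z/2$ vanishes. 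For a closed surface group the non-liftable locus is nonempty --- it is an entire irreducible component (the paper itself records $\XS=\XS_l\cup\XS_r$ with $\XS_l$ the liftable characters) --- and the pullback map from $\C[\RS]^G$ to the $\SL_2(\C)$-invariant ring annihilates every function supported on that component, so your argument says nothing about invariant functions there. To repair this one must either run the same argument on the second component using representations of a suitable central extension of $\pi$ (a twisted $\SL_2$-representation variety), or abandon the lift and argue in $\SO_3(\C)$ as in the reference.

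Two smaller points. The identity $\tau_{\alpha\beta}\tau_{\alpha\beta^{-1}}=\tau_{[\alpha,\beta]}+\tau_{\beta^2}$ is false: the fundamental identity gives $\tau_{\alpha\beta}\tau_{\alpha\beta^{-1}}=\tau_{\alpha\beta\alpha\beta^{-1}}+\tau_{\beta^2}$, and $\alpha\beta\alpha\beta^{-1}$ is not the commutator; the correct consequence of Fricke's identity is $\tau_{\alpha\beta}\tau_{\alpha\beta^{-1}}=\tau_\alpha^2+\tau_\beta^2-\tau_{[\alpha,\beta]}-2$, which is what your reduction actually needs and which does rescue that computation. Finally, the step you yourself flag as the main obstacle --- that every $H^1(\pi,\Z/2)$-invariant polynomial in traces lies in the subring generated by the squares $\tau_\gamma^2$ --- is the real content of the lemma on the liftable component and is not carried out; the identity $2\tau_\alpha\tau_\beta\tau_{\alpha\beta}=\tau_\alpha^2\tau_\beta^2+\tau_{\alpha\beta}^2-\tau_{\alpha\beta^{-1}}^2$ makes it plausible, but an induction on word length (or a direct appeal to the cited reference) is still required to close it.
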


$\XS$ is the affine set such that $\C[\XS] = \C[\RS]^G$, it is called the \emph{character variety} of $S$.
A consequence of the lemma is that the points of $\XS$ are in one-to-one correspondence with
the set of \emph{characters}, {\it i.e.} complex-valued functions of the form $\gamma \in \pi \mapsto \mathrm{tr}^2(\rho(\gamma))$.

The affine set $\XS$ splits into two irreducible components $\XS_l \cup \XS_r$, where elements of $\XS_l$
are characters of representations that lift to $\SL(2,\C)$.

The set-theoretic quotient $\RS /G$ is rather complicated, but $G$ acts freely and properly on the subset $\RS^s$ of irreducible\footnote{
A representation $\rho : \pi \to \PSL_2(\C)$ is called \emph{irreducible} if it fixes no point in $\CP$.} (``stable'') 
representations, so that the
quotient $\RS^s/G$ is a complex manifold.
Furthermore, an irreducible representation is determined by its character, so that $\XS^s := \RS^s/G$ embeds (as a Zariski-dense open subset) in the smooth locus
of $\XS$. Its dimension is $6g-6$. Let us mention that more generally, $\XS$ is in bijection with the set of orbits of ``semistable'' ({\it i.e.} reductive
\footnote{A nontrivial representation $\rho : \pi \to \PSL_2(\C)$ is called \emph{reductive} if it is either irreducible of it fixes a pair of distinct points 
in $\CP$.})
representations.

It is relatively easy to see that the Zariski tangent space at a point $\rho \in \RS$ is described as 
the space of crossed homomorphisms $Z^1(\pi, \mathfrak{g}_{\mathrm{Ad}\circ\rho})$
 ({\it i.e.} $1$-cocycles in the
sense of group cohomology), specifically maps $u : \pi \rightarrow \slC$ such that 
$u(\gamma_1 \gamma_2) = u(\gamma_1) + \mathrm{Ad}_{\rho(\gamma_1)}u(\gamma_2)$ \footnote{where of course $\mathrm{Ad} : G \to \mathrm{Aut} \mathfrak{g}$
is the adjoint representation. \label{adjointdef}}.
The subspace corresponding to the tangent space of the $G$-orbit of $\rho$ is the
space of principal crossed homomorphisms $B^1(\pi, \mathfrak{g}_{\mathrm{Ad}\circ\rho})$ ({\it i.e.} $1$-coboundaries in the
sense of group cohomology), specifically maps $u : \pi \rightarrow \slC$ such that 
$u(\gamma) = \mathrm{Ad}_{\rho(\gamma)}u_0 - u_0$ for some $u_0 \in\slC$. Hence for (at least) smooth points $[\rho] \in \XS$, the tangent space
is given by $T_{[\rho]}\XS = H^1(\pi,\mathfrak{g}_{\mathrm{Ad}\circ\rho})$.

\subsection{The complex symplectic structure on the character variety}

By the general construction of \cite{goldmannature}, the character variety enjoys a complex symplectic structure defined in this situation as follows.

Recall that the Lie algebra $\mathfrak{g} = \slC$ is equipped with its complex Killing form $B$ \label{killingdef}. It is a non-degenerate complex bilinear symmetric form preserved
by $G$ under the adjoint action. Let $\tilde{B}=\frac{1}{4} B$, it is explicitly given by $\tilde{B}(u,v) = \mathrm{tr}(uv)$ where $u, v\in \slC$ 
are represented by trace-free $2 \times 2$ matrices.

One can compose the standard cup-product in group cohomology with $\tilde{B}$
\footnote{It would look somewhat more natural to use the actual Killing form $B$ instead of $\tilde{B} = \frac{1}{4}B$, but we choose to go 
with $\tilde{B}$ because it is the convention used by most authors. Moreover, it gives a slightly simpler expression of our theorems \ref{main1},
\ref{main2} and \ref{main3}.} as ``coefficient pairing'' to get a dual pairing
\begin{equation}H^1(\pi,\mathfrak{g}_{\textrm{Ad} \circ \rho}) \times H^1(\pi,\mathfrak{g}_{\textrm{Ad} \circ \rho}) \stackrel{\cup}{\to}
H^2(\pi,\mathfrak{g}_{\textrm{Ad} \circ \rho} \otimes \mathfrak{g}_{\textrm{Ad} \circ \rho}) \stackrel{\tilde{B}}{\to} H^2(\pi,\C) \cong \C ~.\end{equation}

This pairing defines a non-degenerate complex bilinear alternate $2$-form on
the complex vector space $H^1(\pi,\mathfrak{g}_{\textrm{Ad} \circ \rho}) \approx T_{[\rho]}\XS$.
It globalizes into a non-degenerate $2$-form $\omega_G$ on $\XS^s$. By arguments of Goldman (\cite{goldmannature}) following Atiyah-Bott (\cite{AB})
this form is closed, in other words it is a complex symplectic form on the smooth quasi-affine variety $\XS^s$ \label{gsdef}
\footnote{In fact, it defines an algebraic tensor on the whole character variety, see \cite{goldmannature}.}.

\subsection{Holonomy of projective structures}\label{holonomy}

Just like any geometric structure, a complex projective structure $Z$ defines a \emph{developing map}
and a \emph{holonomy representation} (see e.g. \cite{thurston}). The developing map is a locally injective projective map 
$f : \tilde{Z} \rightarrow \CP$ 
and it is equivariant with respect to the holonomy representation $\rho : \pi \rightarrow \PSL_2(\C)$ in the sense that 
$f \circ \gamma = \rho(\gamma) \circ f$ for any 
$\gamma \in \pi$.

Holonomy of complex projective structures defines a map
\begin{equation*}
\hol : \CPS \rightarrow \XS~.
\end{equation*}
It is differentiable and its differential is ``the identity map'' in the sense that 
it is the canonical identification 
$$d\hol : T_Z\CPS = \check{H}^1(Z,\Xi_Z) \stackrel{\sim}{\rightarrow} H^1(\pi,\mathfrak{g}_{\textrm{Ad} \circ \rho}) = T_{[\hol(Z)]}\XS~.$$
A consequence of this observation is that $\hol$ is a local biholomorphism.

The holonomy representation $\rho$ of a complex projective structure satisfies the following properties:
\begin{itemize}
 \item[$\bullet$] $\rho$ is liftable to $\SL_2(\C)$ (a lift is provided by the monodromy of the Schwarzian equation).
 The image of the holonomy map thus lies in the irreducible component $\XS_l$ of $\XS$.
 \item[$\bullet$] The action of $\Gamma := \rho(\pi)$ on hyperbolic $3$-space $\HH^3$ does not fix any point or ideal point, 
 nor does it preserve any geodesic. Representations having this property are called non-elementary. They are in particular irreducible representations, 
hence smooth points of the character variety as expected.
\end{itemize}
Conversely, {Gallo}-{Kapovich}-{Marden} showed that any non-elementary liftable representation is the holonomy of a complex
projective structure (\cite{gallo}). 

Although the holonomy map $\hol : \CPS \rightarrow \XS$ is a local biholomorphism, it is neither injective nor a covering onto its image (\cite{hejhal}).
Nonetheless, we get a complex symplectic structure on $\CPS$ simply by pulling back that of $\XS^s$ by the holonomy map. Abusing notations, we will still
call this symplectic structure $\omega_G$. Alternatively, one could directly define $\omega_G$ on $\CPS$ in terms of the exterior product of $1$-forms with 
values in some flat bundle
(recall that $T_Z \CPS = \check{H}^1(Z,\Xi_Z)$, where $\Xi_Z$ is the sheaf of projective vector fields on $Z$, see section \ref{pif}). 
We will consider $\omega_G$ as the \emph{standard complex symplectic structure on $\CPS$} (notably because it does not depend on 
any choice).

\subsection{Fuchsian structures and a theorem of Goldman}\label{fuchhol}

Let $\FS$ be the space of marked hyperbolic structures on $S$ (we abusively use the same notation as for the Fuchsian space). More precisely, $\FS$
is the space of complete hyperbolic metrics on $S$ quotiented by $\mathrm{Diff}^+_0(S)$. 
In terms of geometric structures, $\FS$ is the 
deformation space of $(\HH^2,\PSL_2(\R))$-structures on $S$ (this is a consequence of Cartan-Hadamard's theorem). 
Holonomy identifies $\FS$ as the connected component of the character variety ${\cal X}(S,\PSL_2(\R))$ corresponding 
to faithful and discrete representations. $\FS$ is sometimes called the Fricke space of $S$.

The uniformization theorem states that there is a unique hyperbolic metric in each conformal class of Riemannian metrics on $S$. 
Since $S$ is oriented, the choice of a conformal structure on $S$ is equivalent to that of a complex structure on $S$. 
The uniformization theorem thus
provides a bijective map $$u : \TS \to \FS~.$$
By definition of the Fuchsian section $\sigma_{\cal F}$, the map $u$ is precisely identified as $\sigma_{\cal F}$ if hyperbolic structures are considered
as special examples of complex projective structures. Putting it differently, the following diagram commutes:
\begin{equation}\xymatrix{
\CPS \ar[rr]^{\hol} & & {\cal X}(S,\PSL_2(\C))\\
\TS \ar[rr]^{u} \ar[u]^{\sigma_{\mathcal{F}}} & & \FS \hookrightarrow {\cal X}(S,\PSL_2(\R))~. \ar[u]^\iota
}\end{equation}
It is derived from this diagram that $\sigma_{\cal F}$ is a maximal totally real\footnote{see 
section \ref{ancont} for a definition of this notion and a different argument for this fact.} 
analytic embedding of $\TS$ in $\CPS$.

By Goldman's general construction in \cite{goldmannature} (described above 
in the case of $G =\PSL_2(\C)$),
${\cal X}(S,\PSL_2(\R))$ is equipped with a real symplectic structure $\omega_{G,\PSL_2(\R)}$. Of course
 it is just the restriction of the symplectic structure $\omega_G = \omega_{G,\PSL_2(\C)}$ on ${\cal X}(S,\PSL_2(\R))$.
Recall that $\TS$ is also equipped with a symplectic structure, the Weil-Petersson K\"ahler form $\omega_{WP}$. 
In the same paper, Goldman shows that they are the same. More precisely, this is expressed in our setting as follows:
\begin{theorem}[Goldman \cite{goldmannature}]
\begin{equation}\label{wgfsss}
 {(\sigma_{\cal F})}^* \omega_{G} = \omega_{WP}~.
\end{equation}
\end{theorem}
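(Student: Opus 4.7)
The plan is to compute both sides infinitesimally at a fixed point $X \in \TS$ and match them as real bilinear forms on $T_X \TS$. Write $X = \HH^2/\Gamma$ with $\Gamma = \rho(\pi)$ where $\rho = \hol\circ\sigma_{\cal F}(X) : \pi \to \PSL_2(\R)$ is the uniformizing Fuchsian representation. The key is to pass from the abstract group cohomology description of $T_{[\rho]}\XS$ to a de Rham description, where the cup-product/Killing-form pairing becomes an explicit integral over $X$ that can be compared to the Weil-Petersson pairing.

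First I would replace group cohomology by de Rham cohomology with local coefficients. Let $E$ be the flat vector bundle $\tilde{X}\times_{\mathrm{Ad}\circ\rho} \slC$ over $X$; then $H^1(\pi,\mathfrak{g}_{\mathrm{Ad}\circ\rho}) \cong H^1_{dR}(X,E)$. Because $\rho$ is Fuchsian, the Killing form on $\mathfrak{g}$ combined with the hyperbolic metric coming from the uniformization endows $E$ with a natural Hermitian pairing and thus a Hodge theory on $E$-valued forms, so that each cohomology class has a unique harmonic representative. Under this de Rham isomorphism, the cup-product coefficient-paired by $\tilde{B}$ becomes the wedge-and-integrate pairing
\begin{equation*}
\omega_G(a,b) \;=\; \int_X \tilde{B}\bigl(\alpha_a \wedge \alpha_b\bigr)
\end{equation*}
for harmonic representatives $\alpha_a, \alpha_b \in \Omega^1(X,E)$.

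Second I would compute the differential $d\sigma_{\cal F}$ at $X$ in this model. Given a harmonic Beltrami differential $\mu \in \HB(X)\cong T_X\TS$, standard Ahlfors-type variational formulas produce a $1$-cocycle $c_\mu : \pi\to \mathfrak{g}$ whose class is $d(\hol\circ\sigma_{\cal F})(\mu)$. Using the Eichler--Shimura-type decomposition for Fuchsian $\rho$, namely $H^1_{dR}(X,E)\cong Q(X)\oplus \overline{Q(X)}$, I would exhibit the harmonic representative $\alpha(\mu) \in \Omega^1(X,E)$ of $c_\mu$: its $(0,1)$-component is essentially $\mu$ (after identifying the relevant line-bundle factors in $E$), while its $(1,0)$-component is the conjugate quadratic differential dictated by harmonicity and by the fact that $\sigma_{\cal F}$ lands in the real locus ${\cal X}(S,\PSL_2(\R))$.

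Third I would plug $\alpha(\mu)$ and $\alpha(\nu)$ into the integral. A local computation in a chart $z:U\to\HH^2$, using that $\tilde B(u,v) = \mathrm{tr}(uv)$ on $\slR$-matrices and that the Poincar\'e area density $\sigma$ appears when expressing the harmonic $(1,0)$-part in terms of the $(0,1)$-part, collapses $\int_X \tilde B(\alpha(\mu)\wedge\alpha(\nu))$ to a multiple of $-\,\mathrm{Im} \int_X \tfrac{1}{4}\, u\,\overline{v}\, \sigma^{-1}\, dz\wedge d\bar z$, matching the definition $\omega_{WP} = -\,\mathrm{Im}\langle\cdot,\cdot\rangle_{WP}$.

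The main obstacle is the bookkeeping in the middle step: one must write down a genuine harmonic $E$-valued $1$-form representing the deformation of a Fuchsian representation, check that it is actually harmonic for the Hodge inner product induced by $\tilde{B}$ and the hyperbolic metric, and keep track of the isomorphism identifying $E$-valued $(0,1)$-forms with Beltrami differentials. All constants — the $\tilde B = \tfrac{1}{4}B$ convention, the $-\tfrac{1}{4}$ in the Weil--Petersson product, the factor coming from the adjoint representation $\PSL_2(\C)\cong\SO_3(\C)$ — must align, and this is where sign or factor-of-two discrepancies historically arise (see the footnote about Kawai's conventions); verifying the precise normalization is the only non-routine part of the argument.
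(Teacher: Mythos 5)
The paper does not prove this statement at all: it is imported directly from Goldman \cite{goldmannature}, so there is no in-paper argument to measure yours against. Your plan is, in outline, a faithful reconstruction of Goldman's own proof --- de Rham model for $H^1(\pi,\mathfrak{g}_{\mathrm{Ad}\circ\rho})$ via the flat bundle $E$, the cup product realized as $\int_X \tilde{B}(\alpha\wedge\beta)$, the Eichler--Shimura splitting $H^1(X,E\otimes\C)\cong Q(X)\oplus\overline{Q(X)}$ at a Fuchsian point, and a local trace computation --- and each of these steps is correct and standard.

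The caveat is that the entire content of the theorem is the normalization, and that is exactly the step your plan defers (``a multiple of''). To close it you must actually exhibit, not merely invoke, the harmonic representative $\alpha(\mu)$: via the harmonic-metric decomposition of the complexified adjoint bundle into line-bundle summands $K^{-1}\oplus\mathcal{O}\oplus K$, the representative of $d(\hol\circ\sigma_{\cal F})(\mu)$ has $(0,1)$-part the Beltrami differential $\mu$ in the $K^{-1}$-summand and $(1,0)$-part its harmonic conjugate quadratic differential $\overline{\mu}\cdot\sigma$ in the $K$-summand, and $\mathrm{tr}$ pairs these two summands with a definite constant that must then be tracked through the $\tilde{B}=\frac{1}{4}B$ convention, the $-\frac{1}{4}$ in the Weil--Petersson product, and the choice of fundamental class used to identify $H^2(\pi,\C)$ with $\C$ (the orientation convention is what decides $\mathrm{Im}$ versus $-\mathrm{Im}$). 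Since the surrounding discussion in the paper (the footnote on Kawai) documents that precisely these constants have been gotten wrong in the literature, what you have is a correct and well-chosen plan rather than a proof: the ``only non-routine part'' you flag at the end is the theorem.
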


\subsection{A Lagrangian embedding}

Let $\hat{M}$ be a compact $3$-manifold as in section \ref{cphyp}. We will use here the same notations as in section \ref{cphyp}, let us briefly recall these.
The boundary $\partial \hat{M}$ is the disjoint
union of $N$ surfaces $S_k$ of genera at least $2$. The Teichm\"uller space of the boundary is given by 
${\cal T}(\partial \hat{M}) = {\cal T}(S_1) \times \dots \times {\cal T}(S_N)$, and similarly 
${\cal CP}(\partial \hat{M}) = {\cal CP}(S_1) \times \dots \times {\cal CP}(S_N)$. The forgetful projection is the holomorphic map 
$p = p_1 \times \dots \times p_N : {\cal CP}(\partial \hat{M}) \to {\cal T}(\partial \hat{M})$, and 
$\beta : {\cal T}(\partial \hat{M}) \to {\cal CP}(\partial \hat{M})$ is the ``simultaneous
uniformization section''.

By Goldman's construction discussed above, ${\cal CP}(\partial \hat{M})$ is equipped with a complex symplectic structure $\omega_G$, which is obtained here as
\begin{equation} \omega_G = {\pr_1}^* {\omega_G}^{(1)} + \dots + {\pr_N}^* {\omega_G}^{(N)}~, \end{equation}
where ${\omega_G}^{(k)}$ is the complex symplectic structure on ${\cal CP}(S_k)$
and  $\pr_k$ is the $k^{\textrm{th}}$ projection map ${\cal CP}(\partial \hat{M}) \to {\cal CP}(S_k)$.

There is a general argument, discovered in this setting by Steven Kerckhoff, which shows that
\begin{theorem}\label{lagemb}
$\beta : {\cal T}(\partial \hat{M}) \to {\cal CP}(\partial \hat{M})$
is a Lagrangian embedding. 
\end{theorem}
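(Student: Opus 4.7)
The plan is to factor $\beta$ through a restriction map on character varieties and invoke the classical fact, rooted in Poincar\'e--Lefschetz duality for the pair $(\hat{M},\partial\hat{M})$, that such restrictions produce isotropic subspaces for Goldman's form. Note first that $\beta$ is automatically an embedding, since as a section of the holomorphic projection $p:\mathcal{CP}(\partial\hat{M})\to\mathcal{T}(\partial\hat{M})$ it is injective and an immersion. Moreover $\dim_\C\mathcal{T}(\partial\hat{M})=\sum_k(3g_k-3)=\tfrac{1}{2}\dim_\C\mathcal{CP}(\partial\hat{M})$, so the image of $\beta$ is already half-dimensional. It therefore suffices to prove isotropy, i.e.\ $\beta^*\omega_G=0$.

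For the factorization step, given $X\in\mathcal{T}(\partial\hat{M})$, let $\tilde{\rho}:\pi_1(\hat{M})\to\PSL_2(\C)$ be the holonomy of the convex cocompact hyperbolic structure $\psi^{-1}(X)$ on $M$. The developing map of this hyperbolic structure extends $\tilde{\rho}$-equivariantly to the domain of discontinuity $\Omega\subset\CP$, and restricting to each boundary component $S_k$ recovers the developing data of the projective structure $\pr_k\circ\beta(X)$. Hence the holonomy of $\beta(X)$ is the pullback of $\tilde{\rho}$ under the inclusions $\pi_1(S_k)\hookrightarrow\pi_1(\hat{M})$, and $\hol\circ\beta$ factors as
\begin{equation*}
\mathcal{T}(\partial\hat{M})\xrightarrow{\psi^{-1}}\mathcal{HC}(M)\xrightarrow{\hol}\mathcal{X}(\pi_1(\hat{M}),\PSL_2(\C))\xrightarrow{r}\mathcal{X}(\pi_1(\partial\hat{M}),\PSL_2(\C))~.
\end{equation*}
At the tangent level, $r$ is induced by restriction in group cohomology, $r_*:H^1(\pi_1(\hat{M});\mathfrak{g}_{\mathrm{Ad}\circ\tilde{\rho}})\to H^1(\pi_1(\partial\hat{M});\mathfrak{g}_{\mathrm{Ad}\circ\tilde{\rho}})$, so the image of $d(\hol\circ\beta)$ at $\tilde{\rho}$ is contained in the image of $r_*$.

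The topological core of the argument is then to show that the image of $r_*$ is isotropic for $\omega_G$. Goldman's form is computed as cup product followed by the coefficient pairing $\tilde{B}(u,v)=\mathrm{tr}(uv)$ on $\slC$ and evaluation on the fundamental class $[\partial\hat{M}]$. If $u=r^*\tilde{u}$ and $v=r^*\tilde{v}$ for classes $\tilde{u},\tilde{v}\in H^1(\hat{M};\mathfrak{g}_{\mathrm{Ad}\circ\tilde{\rho}})$, naturality of cup product gives $\tilde{B}(u\cup v)=r^*\tilde{B}(\tilde{u}\cup\tilde{v})$, hence
\begin{equation*}
\omega_G(u,v)=\langle\tilde{B}(u\cup v),[\partial\hat{M}]\rangle=\langle\tilde{B}(\tilde{u}\cup\tilde{v}),r_*[\partial\hat{M}]\rangle=0,
\end{equation*}
since $\partial\hat{M}$ bounds $\hat{M}$ and therefore $r_*[\partial\hat{M}]=0$ in $H_2(\hat{M};\C)$. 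This proves isotropy, and together with the dimension count concludes the proof.

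The main obstacle is the careful bookkeeping in this last step: identifying Goldman's symplectic form on the disconnected character variety $\prod_k\mathcal{X}(S_k,\PSL_2(\C))$ with cup product evaluation on $[\partial\hat{M}]$ (so that the decomposition $\omega_G=\sum_k\pr_k^*\omega_G^{(k)}$ is consistent with the boundary orientations on the $S_k$), and verifying that the coefficient pairing via the Killing form interacts with the cap product on $H_2(\hat{M})$ in the required way. Once these compatibilities are set up, the vanishing of $\beta^*\omega_G$ reduces to the elementary fact that a boundary is a null-homologous cycle.
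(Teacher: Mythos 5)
Your proof is correct and follows essentially the same route as the paper: reduce the statement to the dimension count plus isotropy, factor the derivative of $\hol\circ\beta$ through the restriction map $H^1(\pi_1(\hat{M});\mathfrak{g}_{\mathrm{Ad}\circ\tilde{\rho}})\to H^1(\pi_1(\partial\hat{M});\mathfrak{g}_{\mathrm{Ad}\circ\tilde{\rho}})$, and kill the Goldman pairing using the topology of the pair $(\hat{M},\partial\hat{M})$. The only cosmetic difference is that you evaluate the cup product against $\iota_*[\partial\hat{M}]=0$ in homology, whereas the paper runs the same duality through the long exact cohomology sequence of the pair together with Poincar\'e--Lefschetz duality; the two formulations are interchangeable.
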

Although this is
a consequence of our theorem \ref{puuff}, we briefly explain this nice argument, based on Poincar\'e duality in cohomology.
This could be done directly on the manifolds ${\cal HC}(M)$ and ${\cal CP}(\partial \hat{M})$, but we prefer
to transport the situation to character varieties, where it is simpler.

Recall that the simultaneous uniformization 
section $\beta$ was defined as the composition $\beta = \psi \circ \varphi^{-1}$, where $\psi : {\cal HC}(M) \to {\cal CP}(\partial \hat{M})$ is the map
which assigns the induced projective structure on $\partial \hat{M}$ to each cocompact hyperbolic structure on the interior $M$ of $\hat{M}$, and 
$\varphi = p \circ \psi : {\cal HC}(M) \to {\cal T}(\partial \hat{M})$ is a biholomorphism. By definition, the embedding $\beta$ is Lagrangian if it is isotropic
 ($\beta^*\omega_G = 0$) and $\dim {\cal CP}(\partial \hat{M}) = 2 \dim {\cal T}(\partial \hat{M})$. We already know the second
statement to be true (see section \ref{pif}). It remains to show that $\beta$ is isotropic, but since $\phi$ is a diffeomorphism, this amounts
to showing that $\psi : {\cal HC}(M) \to {\cal CP}(\partial \hat{M})$ is isotropic.

 Let us have a look at the equivalent statement on holonomy: there is a commutative
diagram $$\xymatrix{
{\cal HC}(M) \ar[d]^{\hol} \ar[rr]^\psi& & \ar[d]^{\hol} {\cal CP}(\partial \hat{M})\\
{\cal \hat{X}}(M,\PSL_2(\C)) \ar[rr]^{f} & & {\cal X}(\partial \hat{M},\PSL_2(\C))~,
}$$ where $f : {\cal X}(M,\PSL_2(\C)) \to {\cal X}(\partial \hat{M},\PSL_2(\C))$ is the map between character varieties induced by
the ``restriction'' map $\iota_* : \pi_1(\partial \hat{M}) \to \pi_1(\hat{M})$\footnote{Note that if $\partial \hat{M}$ is disconnected, we define
its fundamental group $\pi_1(\partial \hat{M})$ as the free product of the fundamental groups of its components, so that a representation $\rho : \pi_1(\partial 
\hat{M}) \to \PSL_2(\C)$ is just a $N$-tuple of representations $\rho_k : \pi_1(S_k) \rightarrow \PSL_2(\C)$.}. Since the property of being isotropic is local,
it is enough to show the following proposition:

\begin{propo}\label{holag}
The map $f : {\cal X}(\hat{M},\PSL_2(\C)) \to {\cal X}(\partial \hat{M},\PSL_2(\C))$ is isotropic.
\end{propo}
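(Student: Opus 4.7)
The plan is to use the cohomological description of Goldman's form together with Poincar\'e--Lefschetz duality for the pair $(\hat M, \partial \hat M)$. The key observation is that $\partial \hat M$ is null-homologous in $\hat M$.

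First, I would set up the infinitesimal picture. Since $M$ is hyperbolic (hence aspherical) and $\hat M$ deformation retracts onto a compact core, $\hat M$ is a $K(\pi_1(\hat M),1)$, so group cohomology of $\pi_1(\hat M)$ with coefficients in $\mathfrak g_{\mathrm{Ad}\rho}$ agrees with the cohomology $H^*(\hat M, \mathfrak g_\rho)$ of the flat bundle $\mathfrak g_\rho$ associated to $\mathrm{Ad}\circ\rho$. Similarly each boundary component $S_k$ is a $K(\pi_1(S_k),1)$. Under these identifications, at a smooth point $[\rho]$ one has
\begin{equation*}
T_{[\rho]}{\cal X}(\hat M, \PSL_2(\C)) = H^1(\hat M, \mathfrak g_\rho),\qquad
T_{[\iota_*\rho]}{\cal X}(\partial \hat M,\PSL_2(\C)) = H^1(\partial \hat M, \mathfrak g_\rho),
\end{equation*}
and the differential $df_{[\rho]}$ is exactly the restriction map $i^*$ induced by the inclusion $i : \partial \hat M \hookrightarrow \hat M$ (this is the cohomological translation of the fact that $f$ is induced by $\iota_*$).

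Next, I would rewrite the Goldman form cohomologically. On $\partial \hat M$, for $u,v \in H^1(\partial \hat M, \mathfrak g_\rho)$,
\begin{equation*}
\omega_G(u,v) \;=\; \bigl\langle u \cup_{\tilde B} v,\; [\partial \hat M] \bigr\rangle,
\end{equation*}
where $\cup_{\tilde B}$ is the cup product followed by the Killing form pairing $\tilde B : \mathfrak g \otimes \mathfrak g \to \C$ on coefficients, and $[\partial \hat M] \in H_2(\partial \hat M, \Z)$ is the fundamental class (the sum of the fundamental classes of the $S_k$ matches the decomposition $\omega_G = \sum_k \pr_k^*\omega_G^{(k)}$).

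The final step is a purely topological computation. By naturality of cup product,
\begin{equation*}
(i^* u) \cup_{\tilde B} (i^* v) \;=\; i^*\bigl(u \cup_{\tilde B} v\bigr) \quad \in H^2(\partial \hat M, \C),
\end{equation*}
where $u \cup_{\tilde B} v \in H^2(\hat M, \C)$ using that the coefficient bundle on the boundary is the pullback of the one on $\hat M$. By naturality of the evaluation pairing, this gives
\begin{equation*}
\omega_G(i^*u, i^*v) \;=\; \bigl\langle i^*(u \cup_{\tilde B} v),\; [\partial \hat M] \bigr\rangle \;=\; \bigl\langle u \cup_{\tilde B} v,\; i_*[\partial \hat M] \bigr\rangle.
\end{equation*}
But $\partial \hat M$ is the boundary of $\hat M$, so $[\partial \hat M] = \partial[\hat M, \partial \hat M]$ in the long exact sequence of the pair, whence $i_*[\partial \hat M] = 0$ in $H_2(\hat M, \Z)$. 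This forces $\omega_G(i^*u, i^*v) = 0$, so $f$ is isotropic.

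The only delicate points are checking that the abstract group-cohomology cup product used to define $\omega_G$ really agrees, under the identifications with manifold cohomology, with the topological cup product paired against the fundamental class (this is standard but needs to be invoked), and that the twisted Poincar\'e--Lefschetz machinery applies with the non-degenerate pairing coming from $\tilde B$. The rest is formal naturality. I do not expect a serious obstacle; the substantive geometric input has already been absorbed into the statement that $\partial \hat M$ bounds.
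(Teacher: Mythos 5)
Your argument is correct, and it reaches the same conclusion by a slightly different (and arguably more economical) route than the paper. The paper identifies $df$ with the map $\alpha$ in the long exact cohomology sequence of the pair $(\hat M,\partial\hat M)$ with coefficients in $\mathfrak g_{\mathrm{Ad}\circ\rho}$, observes that Poincar\'e--Lefschetz duality intertwines $\alpha$ with the adjoint $\beta^*$ of the connecting map, and concludes from exactness ($\beta\circ\alpha=0$) that $\omega_G(\alpha u,\alpha v)=\langle u^*,\beta\alpha(v)\rangle=0$. You instead keep everything on the cochain side: naturality of the cup product gives $(i^*u)\cup_{\tilde B}(i^*v)=i^*(u\cup_{\tilde B}v)$, adjointness of $i^*$ and $i_*$ under the Kronecker pairing moves the class to $i_*[\partial\hat M]$, and the only exactness you need is the untwisted statement $i_*\circ\partial_*=0$ in ordinary homology, i.e.\ that $[\partial\hat M]$ bounds. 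The two proofs encode the same duality fact, but yours avoids having to check the commutativity of the Poincar\'e-duality squares in the twisted exact sequence (the one genuinely delicate point in the paper's diagram), at the cost of having to justify carefully that the group-cohomology cup product defining $\omega_G$ matches the topological cup product evaluated on the fundamental class under the aspherical identifications --- a point you correctly flag and which the paper implicitly absorbs into its definition of the Poincar\'e dual $u^*$ by $\langle u^*,v\rangle=\tilde B(u\cup v)\cap[\partial\hat M]$. Both versions are complete modulo that standard compatibility.
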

\begin{proof}
Let $[\rho] \in {\cal X}(\hat{M},\PSL_2(\C))$. The map 
$$df : T_{[\rho]} {\cal X}(\hat{M},\PSL_2(\C) \to T_{[\rho \circ \iota_*]} {\cal X}(\partial \hat{M},\PSL_2(\C))$$
is the map $\alpha$ that appears in long exact sequence in cohomology of the pair $(M, \partial M)$ as follows. This exact diagram shows a piece of this
sequence written in terms of group cohomology, where vertical arrows are given by Poincar\'e duality:
\begin{small}
$$\xymatrix{
H^1(\pi_1(\hat{M}),\mathfrak{g}_{\mathrm{Ad}\circ\rho}) \ar[r]^\alpha \ar[d] & H^1(\pi_1(\partial \hat{M}),\mathfrak{g}_{\mathrm{Ad}\circ\rho}) \ar[r]^\beta \ar[d] & 
H^2(\pi_1(\hat{M}),\pi_1(\partial \hat{M}); \mathfrak{g}_{\mathrm{Ad}\circ\rho}) \ar[d]\\
H^2(\pi_1(\hat{M}),\pi_1(\partial \hat{M}) \ar[r]^{\beta^*}; \mathfrak{g}_{\mathrm{Ad}\circ\rho})^* & 
H^1(\pi_1(\partial \hat{M}),\mathfrak{g}_{\mathrm{Ad}\circ\rho})^* \ar[r]^{\alpha^*} & 
H^1(\pi_1(\hat{M}),\mathfrak{g}_{\mathrm{Ad}\circ\rho})^*
}$$
\end{small}
Note that if $u\in H^1(\pi_1(\partial \hat{M}),\mathfrak{g}_{\mathrm{Ad}\circ\rho})$, 
the Poincar\'e dual of $u$ is defined by the relation 
$\langle u^*, v \rangle = \tilde{B}(u \cup v) \cap [\partial \hat{M}]$ 
for all $v \in H^1(\pi_1(\partial \hat{M}),\mathfrak{g}_{\mathrm{Ad}\circ\rho})$, 
where $[\partial \hat{M}]$ is the fundamental class of $\partial{\hat M}$. This is precisely
saying that $\langle u^*, v \rangle = \omega_G(u,v)$. It follows that $\alpha$ is isotropic: using the commutativity and
exactness of the diagram, we can write
\begin{eqnarray*}
 \omega_G(\alpha(u),\alpha(v)) & = & \langle \alpha(u)^*,\alpha(v) \rangle\\
 & = & \langle \beta^*(u^*),\alpha(v) \rangle\\
 & = & \langle u^*,\beta \circ \alpha(v) \rangle\\
 & = & 0~.
\end{eqnarray*}
\end{proof}

\begin{remark}
Note that in the quasi-Fuchsian situation $M = S \times \R$, Theorem \ref{lagemb} is trivial, or rather its formulation in terms of
holonomy (cf. Proposition \ref{holag} above). Indeed, the map $f : {\cal X}(\hat{M},\PSL_2(\C)) \to {\cal X}(\partial \hat{M},\PSL_2(\C))$
in that case is just the diagonal
embedding of ${\cal X}(\pi,\PSL_2(\C))$\footnote{where $\pi=\pi_1(\hat{M})=\pi_1(S)$.} into 
${\cal X}(\pi,\PSL_2(\C)) \times {\cal X}(\pi,\PSL_2(\C))$\footnote{Here ${\cal X}(\pi,\PSL_2(\C)) \times {\cal X}(\pi,\PSL_2(\C))$ is
equipped with with the complex symplectic structure ${\pr_1}^*\omega_G-{\pr_2}^*\omega_G$ 
(the minus sign is due to the opposite orientation of $\partial^+ \hat{M}$ and $\partial^- \hat{M}$). The fact that the diagonal
is Lagrangian is a particular case of the following general fact: if $(X,\omega)$ is a symplectic
manifold and $X\times X$ is equipped with the symplectic structure ${\pr_1}^*\omega-{\pr_2}^*\omega$, then the graph of a function $h : X\to X$ is a
Lagrangian submanifold of $X\times X$ if and only if $h$ is a symplectomorphim.}. 
\end{remark}

\section{Complex Fenchel-Nielsen coordinates and Platis' symplectic structure} \label{complexFNP}

\subsection{Fenchel-Nielsen coordinates on Teichm\"uller space and Wolpert theory}\label{reio}

\subsubsection*{Pants decomposition and Fenchel-Nielsen coordinates}

In this paragraph, we consider the Fuchsian (or Fricke) space 
$\FS$ of marked hyperbolic structures on $S$ (or marked Fuchsian projective structures)
rather than Teichm\"uller space of $\TS$ (see section \ref{fuchhol}).
Let us first briefly recall the construction of the classical Fenchel-Nielsen coordinates on $\FS$, as it will be useful
for the subsequent paragraphs.
These depend on the choice of a \emph{pants decomposition} \label{pdc} of $S$, {\it i.e.} an ordered maximal 
collection of distinct, disjoint
\footnote{in the sense that for $j\neq k$, there exists disjoint representatives of $\alpha_j$ and $\alpha_k$.}, 
nontrivial free homotopy classes of simple\footnote{meaning that there exist simple representatives.} 
closed curves $\alpha = (\alpha_1, \dots, \alpha_N)$.

The following are classical facts:
\begin{itemize}
 \item[$\bullet$] $N = 3g-3$.
 \item[$\bullet$] If $c_1, \dots, c_N$ are disjoint representatives of $\alpha_1, \dots, \alpha_N$ (respectively), then $S \setminus \bigcup_{i=1}^N c_i$
 is a disjoint union of $M = 2g-2$ topological pair of pants $P_k$ (thrice-punctured spheres).
 \item[$\bullet$] If $X$ is a hyperbolic structure on $S$, every nontrivial free homotopy class of simple closed curves $\gamma$
 is uniquely represented by a simple a closed geodesic
 $\gamma^X$.
\end{itemize}

Given a hyperbolic structure $X$ on $S$, denote by $l_{\gamma}(X)$ the hyperbolic length of $\gamma^X$. 
This defines a \emph{length function}
 $l_{\gamma} : \FS \to \R_{>0}$. In particular, given a pants decomposition $\alpha$, one gets a function 
$l_\alpha : \FS \to {(\R_{>0})}^N~$. \label{lenfun}
The components $l_{\alpha_i}$ of $l_\alpha$ are called the \emph{Fenchel-Nielsen length parameters}.

Any hyperbolic structure $X$ on $S$ induces a hyperbolic structure (with geodesic boundary) on each 
one of the closed pair of pants $\overline{P_k}$ in the decomposition
$S\setminus \bigcup_{i=1}^N \alpha_i^X = \bigsqcup_{k=1}^M P_k$.
It is well-known that a hyperbolic structure on a closed pair of pants is uniquely determined by the lengths of its three boundary components.
This follows from the observation that a hyperbolic pair of pants is obtained by gluing two isometric oppositely oriented right-angled hexagons in $\HH^2$
and the following elementary theorem in plane hyperbolic geometry:
\begin{propo}
Up to isometry, there exists a unique right-angled hexagon in $\HH^2$ with prescribed lengths on every other side.
\end{propo}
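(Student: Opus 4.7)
The plan is to prove existence and uniqueness separately. Label the sides of a right-angled hexagon cyclically as $a_1, c_3, a_2, c_1, a_3, c_2$, where $a_1, a_2, a_3$ are the prescribed sides and $c_1, c_2, c_3$ are the alternating sides opposite to them.

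For uniqueness, I would invoke the hyperbolic law of cosines for right-angled hexagons, which yields the identity
\begin{equation*}
\cosh c_i = \frac{\cosh a_i + \cosh a_{i+1}\cosh a_{i+2}}{\sinh a_{i+1}\sinh a_{i+2}}
\end{equation*}
(indices mod $3$). This expresses each $c_i$ uniquely in terms of $(a_1,a_2,a_3)$, so all six side lengths are determined. From here, uniqueness up to isometry follows by a standard rigidity argument: fix one vertex at a chosen basepoint in $\HH^2$ and one adjacent edge along a chosen direction; since $\mathrm{Isom}^+(\HH^2)$ acts simply transitively on oriented unit tangent vectors, the remaining five vertices are then determined inductively by the prescribed edge lengths and the right-angle condition.

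For existence, I would proceed by explicit construction. Given $(a_1,a_2,a_3)\in (\R_{>0})^3$, define $c_1, c_2, c_3$ by the formula above; one checks that the right-hand side is strictly greater than $1$ (since $\cosh a_j > 1$ and $\sinh a_j > 0$), so each $c_i>0$ is well-defined. Then build the hexagon side by side in $\HH^2$: start from a geodesic segment of length $a_1$, erect perpendicular segments of length $c_3$ at its two endpoints on the same side, turn by a right angle at each far endpoint, and continue with segments of lengths $a_2$ and $a_3$ prescribed by the data. To show the resulting polygonal path closes up at a right angle, I would verify by direct computation in a convenient model (upper half-plane or hyperboloid) that the identity used for uniqueness is precisely the closing condition, so that the $c_i$ defined by it guarantee closure.

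Alternatively, a cleaner geometric approach is to produce three pairwise ultra-parallel geodesics $\ell_1,\ell_2,\ell_3 \subset \HH^2$ whose three pairwise common perpendiculars have lengths $a_1,a_2,a_3$; the hexagon is then assembled from the three common perpendicular segments together with the three sub-segments of the $\ell_i$ cut off between consecutive feet. Existence of such a triple of geodesics follows from a continuity/degree argument: fix $\ell_1, \ell_2$ at distance $a_3$, and parametrize the position of a third geodesic $\ell_3$ on the appropriate side; the pair of functions measuring its distances to $\ell_1$ and $\ell_2$ is a proper submersion onto $(\R_{>0})^2$, hence surjective. The main obstacle here is making this existence step rigorous without getting lost in coordinates — the trigonometric identity does the work cleanly, but one must ensure the constructed path does not self-intersect and that the right angles all have consistent orientation, which is what the common perpendicular viewpoint settles transparently.
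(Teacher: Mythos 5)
The paper never proves this proposition: it is invoked as a classical fact of plane hyperbolic geometry (the hexagon lemma underlying Fenchel--Nielsen coordinates), so there is no argument of the author's to measure yours against, and I assess your sketch on its own. Your route --- the right-angled hexagon cosine rule for uniqueness, an explicit construction for existence --- is the standard textbook one, and your formula with the plus sign, $\cosh c_i=(\cosh a_i+\cosh a_{i+1}\cosh a_{i+2})/(\sinh a_{i+1}\sinh a_{i+2})$, is indeed the correct one for genuine convex hexagons in $\HH^2$ (as in Buser or Thurston); do not be troubled by the minus sign in the cosine rule the paper later quotes from Fenchel for skew hexagons in $\HH^3$, which is an artifact of the orientation conventions for complex side lengths. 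One small point in the uniqueness step: knowing all six side lengths does not by itself pin down the hexagon, since at each vertex a right angle can a priori turn either way; your inductive rigidity argument needs the convexity hypothesis (all turns in the same sense), after which the single global left/right choice is absorbed by an orientation-reversing isometry.

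The genuine gap is in the existence half, which as written defers exactly the two claims where the content of the theorem lives: that the cosine identity is ``precisely the closing condition'' for your edge path, and that the map $\ell_3\mapsto\bigl(d(\ell_3,\ell_1),d(\ell_3,\ell_2)\bigr)$ is a proper submersion onto $(\R_{>0})^2$. Neither is obvious; properness in particular requires ruling out the feet of the common perpendiculars escaping to infinity while both distances stay bounded, and the submersion claim degenerates when the three geodesics admit a common perpendicular. The clean classical fix, close in spirit to your second approach but one-dimensional, is a continuity--monotonicity argument: erect the two prescribed sides $a_1,a_2$ as perpendiculars at the endpoints of a segment of unknown length $t$, let $\eta,\eta'$ be the geodesics perpendicular to them at their far endpoints, and show that for $t$ in the ray where $\eta$ and $\eta'$ are disjoint with correctly placed common perpendicular, their distance $c(t)$ satisfies $\cosh c(t)=\sinh a_1\sinh a_2\cosh t-\cosh a_1\cosh a_2$, hence is continuous, strictly increasing, and sweeps out all of $(0,\infty)$. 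The unique $t$ with $c(t)=a_3$ produces the hexagon and re-proves uniqueness in the same stroke, avoiding both the closure computation and the two-parameter degree argument.
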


As a consequence, a hyperbolic structure on $S$ is completely determined by the lengths of the curves $\alpha_i$, and the parameters $\tau_i$ that prescribe how
the gluing occurs along these curves, {\it i.e.} by which amount of ``twisting''.
However, these parameters $\tau_i$ are not very well defined: 
there is no obvious choice of the hyperbolic structure obtained by ``not twisting at all before gluing''. Also, note that assuming that such
a choice is made, each of these parameters should live in $\R$ indeed and not $\R/2\pi\Z$: although there is a natural isometry $f:X \to Y$ where $Y$ is obtained
by $2\pi$-twisting $X$ along some curve $\alpha_i$, $f$ is not homotopic to the identity.

Let us make this more precise. For any nontrivial free homotopy class of simple closed curves $\gamma$, there is a flow (an $\R$-action)
called \emph{twisting along $\gamma$}
\begin{equation}\mathrm{tw}_\gamma : \R \times \FS \to \FS~.\end{equation} \label{twistflow}
The flow is freely transitive in the fibers of $l_\gamma$. Let us mention that
twist deformations along simple closed curves are naturally generalized first to weighted multicurves, then to the completion ${\cal ML}(S)$ of measured
laminations. This generalization is the notion of \emph{earthquake} introduced by Thurston (see e.g. \cite{kerck}).

Denote by $\mathrm{tw}_\alpha$ the
$\R^N$-action $\mathrm{tw}_\alpha = (\mathrm{tw}_{\alpha_1}, \dots, \mathrm{tw}_{\alpha_N}) : \R^N \times \FS \to \FS$.
The fact that a hyperbolic structure on $S$ is uniquely determined by the lengths parameters $l_{\alpha_i}$ and 
the amount of twisting a long each $\alpha_i$
is precisely stated as: the $\R^N$-action $\mathrm{tw}_\alpha$ is freely transitive in the fibers of $l_\alpha$, 
and the reunion of these fibers is the whole
Fricke space $\FS$. In particular,
\begin{theorem}\label{FN}
Choosing a smooth section to $l_\alpha$ determines a diffeomorphism $$(l_\alpha,\tau_\alpha) :  \FS \to {(\R_{>0})}^N \times \R^N~.$$
\end{theorem}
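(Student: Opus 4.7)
The plan is to use the free transitive action of the twist flow on fibers of $l_\alpha$ to produce the inverse of the claimed diffeomorphism, and then check smoothness on both sides.

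Fix a smooth section $s : (\R_{>0})^N \to \FS$ to the length map $l_\alpha$. Since $l_\alpha$ is invariant under $\mathrm{tw}_\alpha$ (twisting along $\alpha_i$ does not change the length of $\alpha_i^X$, and the curves are disjoint so twisting along $\alpha_i$ does not change $l_{\alpha_j}$ either), the orbits of $\mathrm{tw}_\alpha$ lie in the level sets of $l_\alpha$. The hypothesis recalled in the excerpt is precisely that $\mathrm{tw}_\alpha$ acts freely and transitively on each fiber and that every point of $\FS$ lies in some fiber. Consequently, for every $X \in \FS$ there is a unique element $\tau_\alpha(X) \in \R^N$ with
\begin{equation}
X = \mathrm{tw}_\alpha\bigl(\tau_\alpha(X),\, s(l_\alpha(X))\bigr).
\end{equation}
This defines the map $\tau_\alpha : \FS \to \R^N$ and hence the candidate diffeomorphism $\Phi := (l_\alpha, \tau_\alpha)$.

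The natural inverse is
\begin{equation}
\Psi : (\R_{>0})^N \times \R^N \to \FS, \qquad \Psi(\ell,\tau) = \mathrm{tw}_\alpha(\tau, s(\ell)).
\end{equation}
The map $\Psi$ is smooth as the composition of smooth maps (the flow $\mathrm{tw}_\alpha$ is smooth, and so is $s$). The identities $\Psi \circ \Phi = \mathrm{id}_{\FS}$ and $\Phi \circ \Psi = \mathrm{id}$ follow from the defining equation of $\tau_\alpha$ together with $l_\alpha \circ s = \mathrm{id}$ and the fact that the twist flow preserves $l_\alpha$.

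The only point requiring a second look is smoothness of $\Phi$, i.e.\ of $\tau_\alpha$. This will follow from smoothness of $\Psi$ combined with the fact that $\Psi$ has everywhere invertible differential: the partial map $\tau \mapsto \mathrm{tw}_\alpha(\tau,s(\ell))$ is a free smooth $\R^N$-action on the fiber $l_\alpha^{-1}(\ell)$, so its differential is injective in the $\tau$-directions; together with the transverse direction $d s$, this gives $d\Psi$ full rank, and the inverse function theorem then promotes the set-theoretic inverse $\Phi$ to a smooth map. Alternatively, one can observe directly that $\tau_\alpha$ is the unique solution of the smooth equation above, with nondegenerate derivative in $\tau$, and apply the implicit function theorem. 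The main (and only genuine) ingredient is therefore the free transitive action of $\mathrm{tw}_\alpha$ in the fibers of $l_\alpha$, which was already asserted in the discussion preceding the theorem; everything else is formal.
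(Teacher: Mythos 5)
Your proposal is correct and follows essentially the same route as the paper, which derives the theorem directly from the freely transitive fiberwise $\R^N$-action $\mathrm{tw}_\alpha$ and defines $\tau_\alpha$ by the same relation $\mathrm{tw}_\alpha(\tau_\alpha,\sigma\circ l_\alpha)=\mathrm{id}_{\FS}$. The only addition is your explicit verification of smoothness of the inverse via the inverse/implicit function theorem, a detail the paper leaves implicit.
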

The function $\tau$ above is naturally defined by $\mathrm{tw}_\alpha(\tau_\alpha,\sigma \circ l_\alpha) = \mathrm{id}_{\FS}$, where 
$\sigma$ is the chosen section. The components $\tau_{\alpha_1}, \dots, \tau_{\alpha_N}$ of $\tau$ are called the \emph{Fenchel-Nielsen twist parameters}.
The theorem above thus says that Fenchel-Nielsen length and twist parameters are global coordinates on $\FS$. 
In particular, one recovers $\dim_\R \TS = \dim_\R \FS = 2N =6g-6$.
It also appears that $\TS \approx \FS$ is topologically a cell, and it follows that $\CPS$ is also a cell.

Note that although the coordinates $\tau_{\alpha_i}$ depend on the choice on a section to $l_\alpha$, the $1$-forms
$d\tau_{\alpha_i}$ and the vector fields $\frac{\partial}{\partial {\tau_{\alpha_i}}}$
do not. In fact, $\frac{\partial}{\partial {\tau_\gamma}}$ is well-defined for any nontrivial free homotopy class of simple closed curve
$\gamma$, and its flow
is of course the twist flow $\mathrm{tw}_{\gamma} : \R \times \FS \to \FS$.

\subsubsection*{Wolpert theory}

We recall a few notions of symplectic geometry and the language of Hamiltonian mechanics. If $(M^{2N},\omega)$ is a symplectic manifold,
$\omega$ determines an bundle map $\omega^\flat : TM \to T^*M$ defined by $\omega^\flat(u) = \omega(u,\cdot)$. Since $\omega$ is non-degenerate, 
$\omega^\flat$ is an isomorphism,
its inverse is denoted by $\omega^\sharp : T^*M \to TM$. If $\alpha$ is a one-form on $M$, $\omega^\sharp(\alpha)$ is thus the unique vector field $X$ such that 
$i_X \omega = \alpha$. If $f$ is a function on $M$, the vector field $X_f := \omega^\sharp(df)$ is called the \emph{Hamiltonian} 
\label{hamildef} (or symplectic gradient) of $f$.
Note that a vector field $X$ is Hamiltonian is and only if the $1$-form $i_X \omega$ \footnote{
where $i_X \omega$ is the contraction of $\omega$ with the vector field $X$.} is exact, it follows that $X$ satisfies ${\cal L}_X \omega = 0$
\footnote{where ${\cal L}_X$ is the Lie derivative along the vector field $X$.}
by Cartan's magic formula. Vector fields $X$ such that ${\cal L}_X \omega = 0$ are the vector fields whose flows preserve $\omega$, 
they are called symplectic vector fields. The \emph{Poisson bracket} of two functions $f$ and $g$ is defined by $\{f,g\} = \omega(X_f,X_g)$.
$f$ and $g$ are said to Poisson-commute (or to be in involution) if $\{f,g\} = 0$. It is easy to see that $f$ and $g$ Poisson-commute if and only
if $f$ is constant along the integral curves of $X_g$ (and vice-versa). If $f = (f_1,\dots,f_N) : M \to \R^N$ is a regular
map such that the $f_i$ Poisson-commute, then $f$ is a Lagrangian fibration. Moreover, the flows of the $-X_{f_i}$ (if they are complete) 
define a transitive $\R^N$-action 
that is transverse to the fibers of $f$ (the reason for the choice of this minus sign will be apparent shortly).
Notice already the analogy with the lengths functions and twist flows above. 
Such functions $f_i$ are said to define a \emph{(completely) integrable Hamiltonian system} on $(M,\omega)$.
As in theorem \ref{FN}, choosing a section to $f$ yields coordinates $g = (g_1,\dots,g_N) : M \to \R^N$\footnote{To be accurate, $g$ takes
values in $\R^{N-k} \times \mathbb{T}^k$ in general, where $k$ is some integer and $\mathbb{T}^k$ is the $k$-dimensional torus.} such
that the $\R^N$-action is given by the flows of the $\frac{\partial}{\partial g_i}$, 
in other words $\frac{\partial}{\partial g_i} = -X_{f_i}$.
In general though, $(f_i,g_i)$ is not a system of Darboux 
coordinates\footnote{By definition, $(f_i,g_i)$ are called \emph{Darboux coordinates}
on $(M,\omega)$ if they are canonical for the symplectic structure: $\omega=\sum_{i=1}^N df_i \wedge dg_i$. 
The celebrated theorem of Darboux
says that there always exists Darboux coordinates locally on any symplectic manifold.} for $\omega$, 
but the classical Arnold-Liouville theorem states
that such a choice of coordinates is possible in a way that is compatible with the Lagrangian fibration and the $\R^N$-action 
(see e.g. \cite{Duis} for
a precise statement and proof of this theorem). The Darboux coordinates obtained by Arnold-Liouville's 
theorem are called \emph{action-angle coordinates}.

In \cite{W1}, \cite{W2} and \cite{W3}, Wolpert developed a very nice theory describing the symplectic geometry of $\FS$ in relation to
Fenchel-Nielsen coordinates. Let us present some of his results. In the following, $\FS$ is equipped with its standard symplectic structure
$\omega_G$ ($ = \omega_{WP}$ under the identification $\TS \approx \FS$, see section $\ref{fuchhol}$).

\begin{theorem}[Wolpert]\label{eree}
Let $\gamma$ be any nontrivial free homotopy class of simple closed curves on $S$. The flow of the Hamiltonian vector field $-X_{l_\gamma}$ is precisely
the twist flow $\mathrm{tw}_\gamma$.
\end{theorem}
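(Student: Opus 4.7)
The goal is to establish $i_{T_\gamma}\omega_G = -dl_\gamma$ on $\FS$, where $T_\gamma$ denotes the infinitesimal generator of $\mathrm{tw}_\gamma$; this is equivalent to $T_\gamma = -X_{l_\gamma}$. I would transfer the problem to the character variety side via the holonomy embedding $\FS \hookrightarrow \mathcal{X}(S,\PSL_2(\R))$: tangent vectors at a Fuchsian $[\rho]$ are then classes in $H^1(\pi,\mathfrak{g}_{\mathrm{Ad}\rho})$ and Goldman's cohomological description $\omega_G(u,v) = \tilde B(u\cup v)\frown[S]$ from section \ref{charGs} is available.

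The first key step is a cohomological description of $T_\gamma$. The infinitesimal earthquake along the simple closed geodesic $\gamma$ is obtained by cutting $S$ along $\gamma$ and sliding one side relative to the other by the unit-speed translation Killing field $E_\gamma \in \slR$ along the axis of $\rho(\gamma)$. A cut-and-paste argument (Mayer--Vietoris if $\gamma$ is separating; fundamental-domain surgery with a chosen basepoint on $\gamma$ if not) identifies the corresponding class $[u_\gamma] \in H^1(\pi,\mathfrak{g}_{\mathrm{Ad}\rho})$ as the Poincar\'e dual of the $\mathfrak{g}$-valued $1$-cycle $E_\gamma\cdot\gamma$. Concretely, one obtains the representative $u_\gamma(\delta) = \sum_{p\in \delta\pitchfork\gamma}\varepsilon_p\, \mathrm{Ad}_{\rho(\delta_p)}E_\gamma$, where $\delta$ is decomposed at each transverse intersection with $\gamma$ and $\varepsilon_p\in\{\pm 1\}$ records the orientation of intersection.

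The second step is to compute $dl_\gamma$ cohomologically. Lifting $\rho$ to $\SL_2$, we have $l_\gamma = 2\,\mathrm{arccosh}(|\mathrm{tr}\,\rho(\gamma)|/2)$, and $E_\gamma$ is, after correct normalization, the traceless derivative of this trace function. Differentiating along a smooth path with tangent cocycle $v$ gives $dl_\gamma(v) = \tilde B(E_\gamma,v(\gamma))$; the cocycle condition on $v$ ensures this depends only on $[v]$. Finally, Goldman's cup-product formula applied to the Poincar\'e-dual description of $[u_\gamma]$ localizes the integration against $[S]$ to $\gamma$ itself and yields $\omega_G(u_\gamma,v) = \tilde B(E_\gamma,v(\gamma))$ for every $v$. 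Matching the two expressions identifies $i_{T_\gamma}\omega_G$ with $\pm dl_\gamma$, with the sign determined by the chosen orientation of the twist deformation. The main obstacle is not the strategy but the bookkeeping: constructing the twist cocycle rigorously in the non-separating case, and threading all the normalizations -- the $\tfrac14$ in $\tilde B$, the unit-speed convention for $E_\gamma$, the intersection signs, and the $\SL_2$-lift -- so that the final constant comes out to exactly $-1$, matching the statement $T_\gamma = -X_{l_\gamma}$ rather than some scalar multiple.
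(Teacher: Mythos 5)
The paper does not actually prove Theorem \ref{eree}: it is quoted from Wolpert (\cite{W1}, \cite{W2}, \cite{W3}) as an ingredient, so there is no in-paper argument to compare yours against. What you have sketched is the standard cohomological proof in the style of Goldman's \emph{Invariant functions on Lie groups and Hamiltonian flows of surface group representations}: represent the infinitesimal twist by the cocycle supported on $\gamma$ built from the unit translation Killing field $E_\gamma\in\slR$, identify its class as Poincar\'e dual to the cycle $E_\gamma\cdot\gamma$, compute $dl_\gamma(v)=\tilde B\bigl(E_\gamma,v(\gamma)\bigr)$ from the trace, and let the cup--product/cap--product localization do the rest. That outline is correct, and it fits the spirit of this paper better than Wolpert's original route, which works directly on Teichm\"uller space (harmonic Beltrami differentials, Poincar\'e series, and an explicit analysis of the Fenchel--Nielsen deformation) rather than on the character variety; the cohomological version generalizes immediately to other invariant functions and other groups, which is exactly what the complexified Theorem \ref{main3} exploits, while Wolpert's version yields the finer geometric formulas such as Theorem \ref{wolfor}. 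Two caveats. First, the step you rightly flag as ``bookkeeping'' is where the actual content of the precise statement $T_\gamma=-X_{l_\gamma}$ (rather than $T_\gamma=c\,X_{l_\gamma}$) lives: with $E_\gamma$ normalized so that $\exp(tE_\gamma)$ translates by hyperbolic distance $t$ along the axis, one finds $\frac{d}{dt}\operatorname{tr}\rho_t(\gamma)=2\sinh(l_\gamma/2)\,\tilde B\bigl(E_\gamma,v(\gamma)\bigr)$ while $\operatorname{tr}\rho(\gamma)=2\cosh(l_\gamma/2)$ gives $\frac{d}{dt}\operatorname{tr}=\sinh(l_\gamma/2)\,\dot l_\gamma$, so the factors of $2$ and the normalization of $\tilde B=\frac14 B$ must be tracked to the end; a proof that stops before this verification only establishes proportionality. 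Second, your cocycle formula $u_\gamma(\delta)=\sum_p\varepsilon_p\,\mathrm{Ad}_{\rho(\delta_p)}E_\gamma$ needs the conjugates chosen consistently with the lifts of $\gamma$ through each intersection point for the cocycle identity to hold in the non-separating case; this is exactly the fundamental-domain surgery you allude to, and it is worth writing out once rather than asserting.
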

In other words, \begin{equation}\frac{\partial}{\partial \tau_\gamma} = -X_{l_\gamma}~.\end{equation}

\begin{theorem}[Wolpert]\label{wolfor}
Let $\gamma$ and $\gamma'$ be distinct nontrivial free homotopy classes of simple closed curves on $S$. Then at any point $X\in\FS$,
\begin{equation}{\omega_G}\left(\frac{\partial}{\partial \tau_\gamma},\frac{\partial}{\partial \tau_{\gamma'}}\right) = \sum_{p\in (\gamma^X \cap {\gamma'}^X)}
\cos \theta_p~,\end{equation} where $\theta_p$ is the angle between the geodesics $\gamma^X$ and ${\gamma'}^X$ at $p$.
\end{theorem}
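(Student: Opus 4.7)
The plan is to reduce the symplectic pairing to a first-order length variation via Wolpert's Hamiltonian identification in Theorem~\ref{eree}, then compute that variation by a direct cut-and-reglue geometric argument. Combining the defining relation $i_{X_f}\omega_G = df$ with $\partial/\partial\tau_\gamma = -X_{l_\gamma}$ yields
\begin{equation*}
\omega_G\left(\frac{\partial}{\partial \tau_\gamma}, \frac{\partial}{\partial \tau_{\gamma'}}\right) = dl_{\gamma'}\left(\frac{\partial}{\partial \tau_\gamma}\right) = \frac{d}{dt}\bigg|_{t=0} l_{\gamma'}\bigl(\mathrm{tw}_\gamma^t(X)\bigr),
\end{equation*}
so the theorem reduces to proving the infinitesimal Fenchel--Nielsen length formula $\frac{d}{dt}|_{t=0}\, l_{\gamma'}(\mathrm{tw}_\gamma^t(X)) = \sum_{p} \cos\theta_p$.

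Next I would realize the twist deformation geometrically as a surgery: cut $X$ along $\gamma^X$ and reglue after a relative slide of signed length $t$ along $\gamma^X$. Under this operation the closed geodesic ${\gamma'}^X$ becomes a piecewise geodesic loop $c_t$ in $\mathrm{tw}_\gamma^t(X)$, freely homotopic to $\gamma'$ and broken at each intersection point $p\in\gamma^X\cap{\gamma'}^X$: at each such $p$, the incoming and outgoing arcs of ${\gamma'}^X$ are reconnected via a segment of length $t$ along $\gamma^X$. The first variation formula for arc length then gives $l_{\gamma'}(\mathrm{tw}_\gamma^t(X)) = \mathrm{length}(c_t) + O(t^2)$, since straightening a piecewise geodesic curve that is $C^0$-close (up to reparametrization) to a smooth closed geodesic costs only second-order length.

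The local contribution at each break point $p$ is computed by lifting to $\HH^2$: near a lift $\tilde p$, one has two half-geodesic pieces of $\widetilde{\gamma'}$ joined by a segment of length $t$ along the lift of $\gamma$ meeting $\widetilde{\gamma'}$ at angle $\theta_p$. A routine hyperbolic trigonometric expansion shows that the length of this broken path exceeds that of the undeformed piece of $\widetilde{\gamma'}$ by $t\cos\theta_p + O(t^2)$; the first-order behaviour is controlled by the Euclidean inner product in the tangent space at $\tilde p$, where the inserted displacement of magnitude $t$ along $\gamma$ has projection $t\cos\theta_p$ onto the direction of $\gamma'$. Summing over the finitely many intersection points and letting $t\to 0$ yields the formula.

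The main obstacle is bookkeeping signs and orientations: the twist flow requires a choice of direction for the slide, and $\cos\theta_p$ is a signed quantity depending on the oriented framing of $\gamma$, $\gamma'$ and $S$. With a consistent convention the computation goes through, and the antisymmetry of $\omega_G$ is encoded in a corresponding antisymmetry of the oriented angle data (so the apparent $\gamma\leftrightarrow\gamma'$ symmetry of the sum conceals a hidden sign flip). The remaining inputs --- the $O(t^2)$ smoothing estimate, uniform in the intersection points by compactness of $S$, and the local trigonometric computation --- are standard.
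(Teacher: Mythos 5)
First, a point of comparison: the paper does not prove this statement at all --- it is quoted from Wolpert's papers \cite{W1}, \cite{W2}, \cite{W3} as background for the later Darboux-coordinate discussion --- so you are supplying an argument where the paper supplies a citation. Your overall strategy is the classical one, and your first reduction is correct: combining $i_{X_f}\omega_G = df$ with Theorem \ref{eree} to rewrite the pairing as $dl_{\gamma'}\bigl(\partial/\partial\tau_\gamma\bigr)$ legitimately reduces the theorem to Wolpert's twist--length cosine formula, and there is no circularity since Theorem \ref{eree} is an independent prior statement.

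The gap is in the geometric step. Your comparison curve $c_t$ --- the arcs of ${\gamma'}^X$ reconnected at each $p\in\gamma^X\cap{\gamma'}^X$ by an inserted segment of length $t$ along $\gamma^X$ --- has length exactly $l_{\gamma'}(X)+|t|\cdot\#(\gamma^X\cap{\gamma'}^X)$, so the claimed estimate $l_{\gamma'}(\mathrm{tw}_\gamma^t(X))=\mathrm{length}(c_t)+O(t^2)$ would force the derivative of $l_{\gamma'}$ to be $\pm\,\#(\gamma^X\cap{\gamma'}^X)$ (and to fail to exist at $t=0$), not $\sum_p\cos\theta_p$. The error is that $C^0$-closeness is not the right hypothesis for the second-order straightening estimate: that estimate requires the exterior angles at the corners to be $O(t)$, whereas $c_t$ has corners of definite angle $\theta_p$ independent of $t$, so straightening it costs \emph{first}-order length --- and that first-order loss, roughly $t(1-\cos\theta_p)$ per crossing, is exactly the discrepancy. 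Your subsequent local computation (projecting the displacement of magnitude $t$ along $\gamma$ onto the tangent direction of $\gamma'$, giving $t\cos\theta_p$) is the correct first-variation statement, but it computes the length of the straightened chord between displaced intersection points, not the length of $c_t$; as written, the two halves of your argument concern different curves. The standard repair is either to take as comparison curve the piecewise geodesic whose arcs join consecutive \emph{displaced} intersection points (its corners are $\pi+O(t)$, so the $O(t^2)$ smoothing lemma genuinely applies), or, more cleanly, to observe that the geodesic representative of $\gamma'$ in $\mathrm{tw}_\gamma^t(X)$ is itself a broken geodesic in $X$ cut along $\gamma^X$ with jumps of size $t$ at the crossings, and to apply the first variation of length of a geodesic segment with moving endpoints; differentiability in $t$ then comes for free from the analyticity of the twisted holonomy $\rho_t(\gamma')$. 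Your remarks on sign and orientation conventions are apt and do need the care you indicate.
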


A direct consequence of these two theorems is:
\begin{theorem}
If $\alpha$ is a pants decomposition of $S$, then Fenchel-Nielsen length functions $l_{\alpha_i}$ define an integrable Hamiltonian system. The Hamiltonian
$\R^N$-action associated to this system is the twist flow $\mathrm{tw}_\gamma$.
\end{theorem}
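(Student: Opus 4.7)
The plan is to derive this theorem as an essentially formal consequence of the two preceding Wolpert theorems, once we unpack what ``integrable Hamiltonian system'' requires: (a) the functions $l_{\alpha_i}$ must Poisson-commute pairwise, (b) the map $l_\alpha = (l_{\alpha_1},\dots,l_{\alpha_N}) : \FS \to (\R_{>0})^N$ must be regular with $N = \frac{1}{2}\dim_\R \FS = 3g-3$, and (c) the $\R^N$-action generated by the Hamiltonian vector fields $-X_{l_{\alpha_i}}$ must be identified with the twist flow $\mathrm{tw}_\alpha$.

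For the Poisson commutativity, I would combine Theorem \ref{eree} and Theorem \ref{wolfor} as follows. By Theorem \ref{eree}, $X_{l_{\alpha_i}} = -\frac{\partial}{\partial \tau_{\alpha_i}}$, so
\begin{equation*}
\{l_{\alpha_i},l_{\alpha_j}\} = \omega_G(X_{l_{\alpha_i}},X_{l_{\alpha_j}}) = \omega_G\!\left(\frac{\partial}{\partial \tau_{\alpha_i}},\frac{\partial}{\partial \tau_{\alpha_j}}\right)~.
\end{equation*}
By Theorem \ref{wolfor}, this equals $\sum_{p \in \alpha_i^X \cap \alpha_j^X} \cos\theta_p$. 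The key observation is that, by the very definition of a pants decomposition, for $i\neq j$ the classes $\alpha_i$ and $\alpha_j$ admit disjoint representatives; since the hyperbolic geodesic is the unique length-minimizing representative in each free homotopy class, the geodesic representatives $\alpha_i^X$ and $\alpha_j^X$ are likewise disjoint. Hence the sum is empty and $\{l_{\alpha_i},l_{\alpha_j}\} = 0$.

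For regularity and dimension counting, I would appeal to Theorem \ref{FN}, which asserts that $(l_\alpha,\tau_\alpha)$ are global coordinates on $\FS$. In particular, the differentials $dl_{\alpha_1},\dots,dl_{\alpha_N}$ are pointwise linearly independent, so $l_\alpha$ is a submersion; combined with $N = 3g-3 = \frac{1}{2}\dim_\R \FS$, this shows that $l_\alpha$ defines a Lagrangian fibration (Poisson commutativity gives isotropy, and the dimensions match). This completes the verification that the $l_{\alpha_i}$ form a completely integrable system.

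Finally, for the identification of the Hamiltonian action, I would again invoke Theorem \ref{eree}: the Hamiltonian vector field $-X_{l_{\alpha_i}}$ equals $\frac{\partial}{\partial \tau_{\alpha_i}}$, whose flow is by definition the twist flow $\mathrm{tw}_{\alpha_i}$. The commuting $\R^N$-action generated by the vector fields $-X_{l_{\alpha_i}}$ is therefore precisely $\mathrm{tw}_\alpha = (\mathrm{tw}_{\alpha_1},\dots,\mathrm{tw}_{\alpha_N})$, which is known to be transitive on the fibers of $l_\alpha$ by the discussion preceding Theorem \ref{FN}. No step here presents a real obstacle: the substantive work has already been done in establishing Wolpert's formulas, and this theorem is really an organizational corollary. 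The only point requiring a moment of care is the disjointness argument for geodesic representatives of distinct pants curves, which is standard but worth stating explicitly.
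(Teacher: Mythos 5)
Your proof is correct and follows exactly the route the paper intends: the paper states this theorem with no proof beyond the remark that it is ``a direct consequence'' of Wolpert's two preceding theorems, and your elaboration (Poisson commutativity from the cosine formula plus disjointness of geodesic representatives of distinct pants curves, regularity from the Fenchel--Nielsen coordinate theorem, and identification of the Hamiltonian action via $\frac{\partial}{\partial \tau_\gamma} = -X_{l_\gamma}$) is precisely the argument being alluded to. The only minor quibble is that disjointness of the geodesic representatives is more properly justified by the fact that closed geodesics realize the geometric intersection number than by length-minimization, but this is a standard point and does not affect correctness.
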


Wolpert also shows that 
\begin{propo}[Wolpert]
If $\alpha$ is a pants decomposition of $S$, then for any $i,j \in \{1,\dots,N\}$
\begin{equation}{\omega_G}\left(\frac{\partial}{\partial l_{\alpha_i}},\frac{\partial}{\partial l_{\alpha_j}}\right) = 0~.\end{equation}
\end{propo}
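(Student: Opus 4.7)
The plan is to determine $\omega_G$ in Fenchel-Nielsen coordinates block by block. From Theorem \ref{eree}, the identity $X_{l_{\alpha_j}} = -\partial/\partial\tau_{\alpha_j}$ together with the defining relation $i_{X_{l_{\alpha_j}}}\omega_G = dl_{\alpha_j}$ yields $\omega_G(\partial/\partial l_{\alpha_i}, \partial/\partial\tau_{\alpha_j}) = \delta_{ij}$. From Theorem \ref{wolfor} and the fact that distinct curves in a pants decomposition are disjoint, $\omega_G(\partial/\partial\tau_{\alpha_i}, \partial/\partial\tau_{\alpha_j}) = 0$ whenever $i \neq j$ (and trivially otherwise). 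Since the twist flows are Hamiltonian hence symplectic, $\mathcal{L}_{\partial/\partial\tau_{\alpha_k}}\omega_G = 0$, and it follows that all coefficients of $\omega_G$ in FN coordinates depend only on the length variables.

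Writing
\begin{equation*}
\omega_G = \sum_i dl_{\alpha_i}\wedge d\tau_{\alpha_i} + \tfrac{1}{2}\sum_{i,j}A_{ij}(l)\,dl_{\alpha_i}\wedge dl_{\alpha_j},
\end{equation*}
where $A_{ij} = \omega_G(\partial/\partial l_{\alpha_i}, \partial/\partial l_{\alpha_j})$ is antisymmetric and depends only on lengths, the closedness $d\omega_G = 0$ forces the horizontal 2-form $\tfrac{1}{2}\sum A_{ij}\,dl_{\alpha_i}\wedge dl_{\alpha_j}$ to be closed on length space, which by itself does not imply $A \equiv 0$. A direct computation from $i_{X_{\tau_{\alpha_i}}}\omega_G = d\tau_{\alpha_i}$ yields $X_{\tau_{\alpha_i}} = \partial/\partial l_{\alpha_i} + \sum_m A_{im}\,\partial/\partial\tau_{\alpha_m}$, from which $\{\tau_{\alpha_i}, \tau_{\alpha_j}\} = \omega_G(X_{\tau_{\alpha_i}}, X_{\tau_{\alpha_j}}) = -A_{ij}$. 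The proposition is therefore equivalent to the vanishing of all twist-twist Poisson brackets $\{\tau_{\alpha_i}, \tau_{\alpha_j}\}$ for $i \neq j$.

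The main obstacle is to establish this vanishing for disjoint pants curves: unlike the pairing $\omega_G(\partial/\partial\tau_{\alpha_i}, \partial/\partial\tau_{\alpha_j})$ which is $0$ by the pointwise cosine sum of Theorem \ref{wolfor}, the bracket $\{\tau_{\alpha_i}, \tau_{\alpha_j}\}$ is genuine additional geometric input. Following Wolpert, one would prove it by explicit calculation in the hyperbolic geometry of a pair of pants, exploiting the fact that twist deformations along two disjoint boundary components of a single pair of pants decouple in the required symplectic sense. Alternatively, one invokes a non-compact Arnold-Liouville argument: the Poisson-commuting lengths $l_{\alpha_i}$ form a moment map for a free Hamiltonian $\R^N$-action by twists, so Darboux coordinates $(l,\tilde{\tau})$ exist with $\tilde{\tau}_i = \tau_i + h_i(l)$, and the geometric content of the proposition is that the Fenchel-Nielsen section agrees with the action-angle section up to a gradient, forcing $h_i \equiv 0$.
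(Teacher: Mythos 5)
Your formal reductions are correct and cleanly carried out: Theorem \ref{eree} gives $\omega_G(\partial/\partial l_{\alpha_i},\partial/\partial\tau_{\alpha_j})=\delta_{ij}$, the twist flows being symplectic does kill the $\tau$-dependence of the coefficients, and the identity $\{\tau_{\alpha_i},\tau_{\alpha_j}\}=-A_{ij}$ checks out. But at the end of all this you have only shown that the proposition is \emph{equivalent} to the vanishing of the twist--twist brackets (equivalently of the $A_{ij}$), which you yourself identify as genuine geometric input --- and then you never supply that input. The first route you gesture at (``explicit calculation in the hyperbolic geometry of a pair of pants'') is not carried out, and the phrase ``decouple in the required symplectic sense'' is a restatement of what is to be proved. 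The second route is circular: Arnold--Liouville produces \emph{some} angle coordinates $\tilde\tau_i=\tau_i+h_i(l)$, and with your normal form one computes $A_{ij}=\partial h_i/\partial l_j-\partial h_j/\partial l_i$, so the proposition is precisely the assertion that the $1$-form $\sum_i h_i\,dl_i$ is closed. Nothing ``forces $h_i\equiv 0$'', and nothing needs to; what must be shown is that $h$ is a gradient, about which Arnold--Liouville says nothing. So the crux of the statement is never proved.

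For what it is worth, the paper does not prove this proposition either --- it is quoted from Wolpert --- so you should not feel obliged to reproduce his length-function computations; but the standard way to close exactly the gap you have isolated is a symmetry argument that fits directly onto your normal form. For each pants decomposition there are hyperbolic surfaces (glue the pairs of pants ``without twisting'') admitting an orientation-reversing isometry preserving each $\alpha_i$; the induced involution $R$ of $\TS$ is an anti-holomorphic isometry for the Weil--Petersson metric, so $R^*\omega_{WP}=-\omega_{WP}$, while $l_{\alpha_i}\circ R=l_{\alpha_i}$ and $\tau_{\alpha_i}\circ R=-\tau_{\alpha_i}$ for the symmetric choice of section. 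Pulling back your expression for $\omega_G$ by $R$, the part $\sum_i dl_{\alpha_i}\wedge d\tau_{\alpha_i}$ changes sign as required, whereas the horizontal part $\tfrac12\sum_{i,j}A_{ij}(l)\,dl_{\alpha_i}\wedge dl_{\alpha_j}$ is $R$-invariant; hence $A_{ij}=-A_{ij}$ on the fixed locus, and since you have already shown that $A_{ij}$ depends only on the lengths --- and the fixed locus surjects onto length space --- it vanishes identically. With that one additional ingredient your argument becomes a complete proof.
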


It follows that we are in the best possible situation:
\begin{theorem}[Wolpert]\label{FNdarb}
Let $\alpha$ be a pants decomposition of $S$. Fenchel-Nielsen length and twist parameters associated to $\alpha$
are respectively action and angle variables for 
the integrable Hamiltonian system defined by the functions $l_{\alpha_i}$. In particular, Fenchel-Nielsen coordinates are Darboux
coordinates for the symplectic structure: \begin{equation}\omega_G = \sum_{i=1}^N dl_{\alpha_i} \wedge d\tau_{\alpha_i}~.\end{equation}
\end{theorem}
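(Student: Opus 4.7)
The plan is to assemble the theorem from the three ingredients established just above: Wolpert's identity $\partial/\partial\tau_\gamma = -X_{l_\gamma}$ (Theorem \ref{eree}), Wolpert's cosine formula (Theorem \ref{wolfor}), and the vanishing $\omega_G(\partial/\partial l_{\alpha_i},\partial/\partial l_{\alpha_j})=0$ (the Proposition). Together with Theorem \ref{FN}, which provides $(l_\alpha,\tau_\alpha)$ as a global smooth chart on $\FS$, these tell us everything we need about $\omega_G$ evaluated on the associated coordinate frame.

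The core of the proof is the computation of the three types of pairings in the basis $\{\partial/\partial l_{\alpha_i},\partial/\partial \tau_{\alpha_j}\}$ of $T\FS$ dual to $\{dl_{\alpha_i},d\tau_{\alpha_j}\}$. First, $\omega_G(\partial/\partial l_{\alpha_i},\partial/\partial l_{\alpha_j})=0$ by the Proposition. Second, for $i\neq j$ the curves $\alpha_i$ and $\alpha_j$ admit disjoint representatives (they belong to a pants decomposition), and a standard fact in hyperbolic surface theory then guarantees that their geodesic representatives $\alpha_i^X$ and $\alpha_j^X$ are also disjoint at every $X\in\FS$; Theorem \ref{wolfor} thus yields $\omega_G(\partial/\partial\tau_{\alpha_i},\partial/\partial\tau_{\alpha_j})=0$, and the case $i=j$ is automatic by antisymmetry. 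Third, invoking Theorem \ref{eree} to write $\partial/\partial\tau_{\alpha_j}=-X_{l_{\alpha_j}}$ together with the defining identity $i_{X_{l_{\alpha_j}}}\omega_G=dl_{\alpha_j}$ gives
\begin{equation*}
\omega_G\!\left(\frac{\partial}{\partial l_{\alpha_i}},\frac{\partial}{\partial\tau_{\alpha_j}}\right)
= -\,\omega_G\!\left(X_{l_{\alpha_j}},\frac{\partial}{\partial l_{\alpha_i}}\right)\cdot(-1)
= dl_{\alpha_j}\!\left(\frac{\partial}{\partial l_{\alpha_i}}\right) = \delta_{ij}.
\end{equation*}

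Expanding $\omega_G$ in the coordinate frame associated with $(l_\alpha,\tau_\alpha)$ then gives exactly $\omega_G=\sum_{i=1}^N dl_{\alpha_i}\wedge d\tau_{\alpha_i}$, which is the Darboux identity. The action/angle statement is then a direct consequence: the $l_{\alpha_i}$ Poisson-commute (by the first computation), their Hamiltonian flows generate the freely transitive $\R^N$-action $\mathrm{tw}_\alpha$ in the fibers of $l_\alpha$ (by Theorem \ref{eree}), and the Darboux formula above is precisely the action-angle normal form, so the $(l_{\alpha_i},\tau_{\alpha_i})$ are action-angle variables for this completely integrable Hamiltonian system.

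The main obstacle is purely a bookkeeping one: keeping the sign conventions consistent between the Hamiltonian convention $i_{X_f}\omega=df$ used in the excerpt, Wolpert's convention $\partial/\partial\tau_\gamma=-X_{l_\gamma}$, and the orientation of the wedge products. Once these signs are fixed once and for all, no further geometric input beyond Wolpert's three theorems is needed, and the statement follows by a purely linear-algebraic assembly in the Fenchel-Nielsen frame.
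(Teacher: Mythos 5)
Your proof is correct and follows exactly the route the paper intends: the paper states Theorem \ref{FNdarb} as a direct consequence of Theorems \ref{eree} and \ref{wolfor} and the Poisson-commutation of the length functions, without writing out the frame computation, and your assembly of the three pairings in the Fenchel--Nielsen coordinate frame (with the signs matching the conventions $i_{X_f}\omega = df$ and $\partial/\partial\tau_\gamma = -X_{l_\gamma}$) is precisely that omitted verification.
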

It is remarkable in particular that this does not depend on the choice of the pants decomposition $\alpha$.

\subsection{Complex Fenchel-Nielsen coordinates}

Kourouniotis (in \cite{K3}, see also \cite{K1} and \cite{K2}) and Tan (in \cite{tan}) introduced a system of global holomorphic coordinates 
$(l^\C,\tau^\C) : \QFS \rightarrow \C^{N} \times \C^N$ that can be thought of as a complexification of Fenchel-Nielsen coordinates on the Fuchsian slice $\FS$.
We outline this construction and refer to \cite{K3}, \cite{tan} and also \cite{series} for details.

\subsubsection*{Complex distance and displacement in hyperbolic space}\label{reret}

Let $\alpha$ and $\beta$ be two geodesics in the hyperbolic space $\HH^3$.
The \emph{complex distance} between $\alpha$ and $\beta$ is the complex number $\sigma = \sigma(\alpha,\beta)$ (defined modulo $2i\pi\Z$) 
such that $\mathrm{Re}(\sigma)$ is the hyperbolic distance between
$\alpha$ and $\beta$ and $\mathrm{Im}(\sigma)$ is the angle between them (meaning the angle between the two planes 
containing their common perpendicular and either $\alpha$ or $\beta$). In the upper half-space model $\HH^3 = \C \times \R^*_+$,
after applying an isometry so that $\alpha$ has endpoints $(u,-u)$ and $\beta$ has endpoints $(p,-p)$ (where $u, p \in \C\mathbf{P}^1$),
$\sigma$ is determined by $e^\sigma u = p$. Note that one has to be careful about orientations and sign to define $\sigma$ unambiguously, see
\cite{K3} and \cite{series} for details.

Let $f$ be a non-parabolic isometry of $\HH^3$ different from the identity, and $\beta$ a geodesic perpendicular to the axis of $f$.
The \emph{complex displacement} of $f$ is the complex distance $\varphi$ between $\beta$ and $f(\beta)$. If $f$ is represented by a matrix $A \in \SL_2(\C)$,
the complex displacement of $f$ is given by \begin{equation}2 \cosh \left(\frac{\varphi}{2}\right) = \mathrm{tr}(A)~.\end{equation}
The complex displacement and oriented axis of a non-parabolic
isometry determine it uniquely.

\subsubsection*{Right-angled hexagons and pair of pants in hyperbolic space}

An (oriented skew) \emph{right-angled} hexagon in $\HH^3$ is a cyclically ordered set of six oriented geodesics $\alpha_k$ indexed
by $k \in \Z/6\Z$, such that $\alpha_k$ intersects $\alpha_{k+1}$ orthogonally. Define the complex length of the ``side'' $\alpha_k$
by $\sigma_k=\sigma(\alpha_{k-1},\alpha_{k+1})$.

\begin{propo}The following relations are showed in \cite{fenchel}:\\
Sine rule:
\begin{equation}\frac{\sinh \sigma_1}{\sinh \sigma_4} = \frac{\sinh \sigma_3}{\sinh \sigma_6} = \frac{\sinh \sigma_5}{\sinh \sigma_2}\end{equation}
Cosine rule: \begin{equation}\cosh \sigma_n = \frac{\cosh \sigma_{n+3}-\cosh \sigma_{n+1}\cosh \sigma_{n-1}}{\sinh \sigma_{n+1}\sinh \sigma_{n-1}}\end{equation}
\end{propo}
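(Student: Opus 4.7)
The strategy is to reduce both identities to pure $\SL_2(\C)$-algebra via the standard ``line-matrix'' (or half-turn) representation of oriented geodesics in $\HH^3$. To each oriented geodesic $\alpha$ one attaches a trace-zero matrix $J_\alpha\in\SL_2(\C)$ with $J_\alpha^2=-I$ (unique up to the sign pinned down by the orientation), whose image in $\PSL_2(\C)$ is the half-turn about $\alpha$. The crucial dictionary, which I would establish first in normal form in the upper half-space model, says that for two oriented geodesics $\alpha,\beta$ of complex distance $\sigma=\sigma(\alpha,\beta)$ along their oriented common perpendicular $\gamma$,
\begin{equation*}
J_\alpha J_\beta \;=\; -\cosh(\sigma)\, I \,+\, \sinh(\sigma)\, J_\gamma,
\end{equation*}
the special case $\alpha\perp\beta$ reducing to $\{J_\alpha,J_\beta\}=0$ (since $\cosh(i\pi/2)=0$).

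Writing $J_k:=J_{\alpha_k}$, the right-angle hypothesis becomes $J_k J_{k+1}+J_{k+1}J_k=0$ for every $k\in\Z/6\Z$, and applying the dictionary to the pair $(\alpha_{k-1},\alpha_{k+1})$ (whose oriented common perpendicular is, by definition of $\sigma_k$, the side $\alpha_k$ itself) yields the fundamental identity
\begin{equation*}
J_{k-1} J_{k+1} \;=\; -\cosh(\sigma_k)\, I \,+\, \sinh(\sigma_k)\, J_k.
\end{equation*}
All subsequent arguments are manipulations in the $\slC$-algebra generated by the six anticommuting pairs $(J_k,J_{k+1})$.

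With this setup, both rules come out of the closing relation for the hexagon, which (after sign bookkeeping) reads $J_1 J_2 J_3 J_4 J_5 J_6 = \pm I$. I would rewrite this closing relation by grouping factors three at a time: using the fundamental identity on $J_{k-1}J_{k+1}$ and the anticommutation of consecutive $J_k$'s, each block $J_{n-1}J_n J_{n+1}$ reduces to an $\slC$-linear combination of $I$ and $J_n$ with coefficients $\sinh\sigma_n,\cosh\sigma_n$. Equating the resulting two expressions for $I$ and projecting onto the $I$-component and onto the $J_n$-component respectively yields the cosine rule (isolating $\cosh\sigma_n$ against $\cosh\sigma_{n+3}$ and the $\cosh\sigma_{n\pm 1}\sinh\sigma_{n\pm 1}$ products) and the sine rule (reading off that the three ratios $\sinh\sigma_1/\sinh\sigma_4$, $\sinh\sigma_3/\sinh\sigma_6$, $\sinh\sigma_5/\sinh\sigma_2$ are each equal to the common scalar appearing in the closing identity).

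The main obstacle is not conceptual but the careful orientation bookkeeping: each of the primitive identities above, as well as the closing relation, is sensitive to the orientations chosen on the six sides $\alpha_k$ and on the implicit common perpendiculars, and a consistent global choice must be made for the signs in the final formulas to match the stated ones rather than their negatives. This is precisely the content that occupies a substantial portion of the algebraic setup in \cite{fenchel}; once that normalization is fixed, both rules are short $\slC$-computations of the type just outlined.
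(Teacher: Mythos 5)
First, a remark on context: the paper does not prove this proposition at all --- it is quoted directly from \cite{fenchel} --- so the relevant comparison is with Fenchel's own derivation. Your general framework (half-turn/line matrices in $\SL_2(\C)$ together with the product dictionary expressing $J_\alpha J_\beta$ in terms of $\cosh\sigma$, $\sinh\sigma$ and a line matrix of the common perpendicular) is indeed the calculus Fenchel uses, so the approach is the right one. However, the identity on which you make everything hinge is false: the product $J_1J_2J_3J_4J_5J_6$ of the half-turns about the six sides taken in cyclic order is \emph{not} $\pm I$. This is already visible for a planar right-angled hexagon lying in a totally geodesic $\HH^2\subset\HH^3$: each $J_k$ restricts to that plane as the reflection in the side $\alpha_k$, so $(J_1J_2)(J_3J_4)(J_5J_6)$ restricts to the composition of the rotations by $\pi$ about the alternate vertices $v_{12},v_{34},v_{56}$; the product of the first two is a hyperbolic translation by $2\,d(v_{12},v_{34})$, and composing with the third half-turn gives a nontrivial isometry. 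Since half-turns are unambiguous elements of $\PSL_2(\C)$, no choice of $\SL_2$-lifts or orientations can repair this. Your block reduction $J_{n-1}J_nJ_{n+1}=\cosh\sigma_n\,J_n\pm i\sinh\sigma_n\,I$ is correct, but the resulting product $(J_1J_2J_3)(J_4J_5J_6)$ then contains $J_2J_5$, i.e.\ the complex distance between the \emph{opposite} sides $\alpha_2$ and $\alpha_5$ --- a quantity that is not among the $\sigma_k$ and is precisely what your (false) closing relation was supposed to eliminate.

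The closing relation that actually works is the tautological one built from half-turns about \emph{alternate} sides: with $A=J_6J_2$, $B=J_2J_4$, $C=J_4J_6$ one has $ABC=J_6J_2^2J_4^2J_6=J_6^2=-I$. The common perpendiculars of $(\alpha_6,\alpha_2)$, $(\alpha_2,\alpha_4)$, $(\alpha_4,\alpha_6)$ are the sides $\alpha_1,\alpha_3,\alpha_5$, so the dictionary expands $A,B,C$ in $\cosh\sigma_1,\sinh\sigma_1$, etc.; writing $AB=-C^{-1}$ and separating the scalar from the trace-free part gives the cosine rule (with $\sigma_{n+3}$ entering through $C^{-1}$), and pairing the trace-free part against the line matrices via the trace form gives the sine rule. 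One further correction to your dictionary: with $J_\gamma^2=-I$ the formula $J_\alpha J_\beta=-\cosh\sigma\,I+\sinh\sigma\,J_\gamma$ is inconsistent, since the right-hand side has determinant $\cosh^2\sigma+\sinh^2\sigma\neq 1$; the trace-free part of $J_\alpha J_\beta$ is $\sinh\sigma$ times a matrix $K_\gamma$ with $K_\gamma^2=+I$ (determinant $-1$, equal to $\pm iJ_\gamma$), and it is these $K$'s that must be carried through the expansion. That is a normalization issue rather than mere sign bookkeeping, but it is routine once the closing relation is the correct one.
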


Using these formulas, one shows that assigning complex lengths on every other side determines a unique right-angled hexagon in $\HH^3$ up to
(possibly orientation-reversing) isometry. In \cite{K3} and \cite{tan}, it is showed the the construction of a hyperbolic pair of pants 
by gluing two right-angled hexagons can be extended to $\HH^3$. Such a pair of pants is thus uniquely determined by the complex lengths of its boundary
components. In terms of holonomy (\cite{K3}):
\begin{propo}
Let $P$ be a topological pair of pants and $\sigma_1$, $\sigma_2$, $\sigma_3 \in \C_+$ ({\it i.e}. with $\mathrm{Re}(\sigma_i) > 0)$. 
There is a unique representation up to conjugation $$\rho : \pi_1(P) = \langle c_1,c_2,c_3 ~|~ c_1c_2c_3 = 1 \rangle \to \PSL_2(\C)$$
such that $\mathrm{tr}(\rho(c_i)) = -2 \cosh \sigma_i$.
\end{propo}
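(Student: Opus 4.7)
The plan is to prove existence and uniqueness separately, relying on the right-angled hexagon construction set up just above and on classical facts about $\SL_2(\C)$-characters of the free group $\pi_1(P)\cong F_2$.

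For existence, the first step is to invoke the uniqueness of right-angled hexagons in $\HH^3$ with prescribed alternate side-lengths (from \cite{fenchel}, as cited) to construct, from the data $(\sigma_1,\sigma_2,\sigma_3)$, a right-angled hexagon whose three alternate sides have appropriately prescribed complex lengths. Gluing two isometric, oppositely oriented copies of this hexagon along the remaining three sides and developing the resulting hyperbolic pair of pants into $\HH^3$ produces a representation $\rho:\pi_1(P)\to\PSL_2(\C)$; by construction, each $\rho(c_i)$ is the loxodromic isometry whose axis is the $i$-th boundary geodesic, and its complex displacement is determined by $\sigma_i$. Applying the formula $2\cosh(\varphi/2)=\mathrm{tr}(A)$ recalled in the previous paragraph and choosing the correct $\SL_2(\C)$-lift of each $\rho(c_i)$ yields $\mathrm{tr}(\rho(c_i))=-2\cosh\sigma_i$. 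The fact that these three sign choices can be made compatibly is tied to the identity $c_1c_2c_3=1$ in $\pi_1(P)$, which lifts to $A_1A_2A_3=-I$ in $\SL_2(\C)$ for a pair of pants — exactly the sign convention that forces the three traces to have the prescribed minus sign.

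For uniqueness, the hypothesis $\mathrm{Re}(\sigma_i)>0$ guarantees that each $\rho(c_i)$ is loxodromic, in particular non-trivial. A direct argument then shows $\rho$ is irreducible: a common fixed point on $\CP$ for all three $\rho(c_i)$ would force two of their axes to coincide, which is incompatible with the non-degenerate hexagonal data (and more concretely with the cosine rule above, which implies the three axes are pairwise distinct geodesics sharing no endpoint on $\partial_\infty\HH^3$). Since $\pi_1(P)$ is free on $c_1,c_2$ with $c_3=(c_1c_2)^{-1}$, the classical Fricke-Vogt theorem asserts that an irreducible representation $F_2\to\SL_2(\C)$ is determined up to conjugation by the three traces $\mathrm{tr}(\rho(c_1)),\mathrm{tr}(\rho(c_2)),\mathrm{tr}(\rho(c_1c_2))$, all of which are prescribed. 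The descent to $\PSL_2(\C)$ is automatic: by the description of $\XS$ in section \ref{charvar}, the character $\gamma\mapsto \mathrm{tr}^2(\rho(\gamma))$ separates conjugacy classes of irreducible $\PSL_2(\C)$-representations, and the assigned data $-2\cosh\sigma_i$ determines $\mathrm{tr}^2(\rho(c_i))$.

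The part I expect to be trickiest is the sign and factor bookkeeping in existence: carefully matching the hexagonal side lengths to the complex translation lengths of the boundary loops $c_i$, and then tracking the minus sign through a consistent $\SL_2(\C)$-lift of $\rho$ compatible with $c_1c_2c_3=1$. Once these conventions are pinned down, existence reduces to the geometric hexagon construction already assembled in the paper, and uniqueness is the purely algebraic Fricke theorem.
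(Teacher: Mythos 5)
The paper itself offers no proof of this proposition: it is quoted from Kourouniotis \cite{K3} and Tan \cite{tan} as the holonomy-theoretic restatement of the hexagon-gluing construction described immediately above it. Your outline --- existence by developing the double of a right-angled hexagon with prescribed alternate complex side-lengths, uniqueness by the Fricke--Vogt theorem that an irreducible representation of $F_2$ into $\SL_2(\C)$ is determined up to conjugation by the traces of $c_1$, $c_2$, $c_1c_2$ --- is the standard route and is consistent with the paper's framing. On the bookkeeping you flag: with the paper's normalization $2\cosh(\varphi/2)=\mathrm{tr}(A)$, the condition $\mathrm{tr}(\rho(c_i))=-2\cosh\sigma_i$ forces the boundary geodesic of $c_i$ to have complex displacement $2\sigma_i$ (mod $2\pi i$), so it is the hexagon's alternate sides, not the boundary curves, that carry the lengths $\sigma_i$; the minus sign is then fixed by the lift satisfying $A_1A_2A_3=-I$.

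The one genuine gap is the irreducibility step in your uniqueness argument. You assert that the hexagonal data is non-degenerate, so the three axes are pairwise distinct and $\rho$ is irreducible; but this is not true for all $(\sigma_1,\sigma_2,\sigma_3)\in(\C_+)^3$. A representation of $F_2$ into $\SL_2(\C)$ with trace triple $(x,y,z)$ is reducible exactly when $x^2+y^2+z^2-xyz=4$, and this locus meets the prescribed region: taking $\sigma_3=\sigma_1+\sigma_2+i\pi$ gives $z=-2\cosh\sigma_3=2\cosh(\sigma_1+\sigma_2)$, and the identity $\cosh^2 a+\cosh^2 b+\cosh^2(a+b)-2\cosh a\cosh b\cosh(a+b)=1$ shows $x^2+y^2+z^2-xyz=4$ while all $\mathrm{Re}\,\sigma_i>0$. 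For such triples no irreducible representation with those traces exists, every representation with those traces is reducible, and uniqueness up to conjugation genuinely fails (diagonal versus non-semisimple upper-triangular representations are not conjugate). This imprecision is arguably inherited from the proposition as quoted, but your proof cannot close as written: the Fricke argument only applies once you have actually verified $x^2+y^2+z^2-xyz\neq 4$ (equivalently, that the hexagon does not degenerate), either by restricting the parameters or by an explicit computation from the cosine rule.
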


\subsubsection*{Complex lengths and complex twisting in the quasi-Fuchsian space}

Let $Z \in \QFS$ be a quasi-Fuchsian structure on $S$ and $\rho : \pi_1(S) \to \PSL_2(\C)$ its holonomy representation. For any nontrivial
free homotopy class of simple closed curves $\gamma$, define the \emph{complex length} of $\gamma$ as the complex displacement of the
hyperbolic isometry $\rho(\gamma)$. This defines a holomorphic function $l_\gamma^\C : \QFS \to \C_+$ \label{complendef}.
In the quasi-Fuchsian $3$-manifold $M$, $\rho(\gamma)$ corresponds to a geodesic of complex length
$l^\C_{\gamma}$, {\it i.e.} of hyperbolic length $\mathrm{Re}(l^\C_{\gamma})$ and torsion $\mathrm{Im}(l^\C_{\gamma})$.
It is easy to see that if $Z$ is a Fuchsian
structure, then $l_\gamma^\C(Z) = l_\gamma(Z)$. If $\alpha = (\alpha_1, \dots, \alpha_N)$ is a pants decomposition of $S$, we call
\begin{equation}l_\alpha^\C = (l_{\alpha_1}^\C, \dots, l_{\alpha_N}^\C) : \QFS \to (\C_+)^N\end{equation} the 
\emph{complex Fenchel-Nielsen length parameters}.

As a consequence of the previous discussion, if the complex lengths $l_{\alpha_1}^\C, \dots, l_{\alpha_N}^\C$ are fixed, a quasi-Fuchsian
structure on $S$ is determined by how the gluings of pair of pants occur along their common boundaries. Analogously to the Fuchsian case, this is prescribed 
by a complex parameter $\tau_{\alpha_i}^\C$, that we will call a complex twist parameter \label{comptwistdef}, that describes both the amount of twisting 
(by $\mathrm{Re}(\tau_{\alpha_i}^\C)$) and the amount of bending (by $\mathrm{Im}(\tau_{\alpha_i}^\C)$)
before gluing. $\tau_{\alpha_i}^\C$ can be more or less well defined as the complex distance between two adequate geodesics in $\HH^3$, but the definition is clearer
in terms of the effect of complex-twisting by $\tau_{\alpha_i}^\C$ on the holonomy of the glued pairs of pants (see \cite{K3}, \cite{Gsl2}).

As in the Fuchsian case, it is the complex twist flow $\mathrm{tw}_\gamma^\C$ along a simple closed curve $\gamma$ that is well-defined
rather than the twist parameter $\tau_{\alpha_i}^\C$ itself, although the complex twist vector field $\dfrac{\partial}{\partial \tau_\gamma^\C}$ is well-defined.
Let us mention this flow is called \emph{bending} by Kourouniotis and corresponds to
(or is a generalization of) what other authors have called \emph{quakebends} or \emph{complex earthquakes} discussed by Epstein-Marden \cite{EP}, 
Goldman \cite{Gsl2}, McMullen \cite{mcmullenearthquakes}, Series \cite{series} among others. It is not hard to see that starting from a Fuchsian structure $Z$,
complex twisting by $t = t_1+i t_2\in\C$ is described as the composition of twisting by $t_1$ on $\FS$ and then \emph{projective grafting}
 by $t_2$ (see e.g. \cite{dumas}
for a presentation of projective grafting).

Choosing a holomorphic section to $l_\alpha^\C$ determines \emph{complex twist coordinates}
$\tau_{\alpha}^\C = (\tau_{\alpha_1}^\C, \dots, \tau_{\alpha_N}^\C) : \QFS \to \C^N$. We will call $(l_{\alpha}^\C,\tau_{\alpha}^\C)$ complex Fenchel-Nielsen
coordinates. The conclusion of our discussion is the theorem:
\begin{theorem}[Kourouniotis, Tan]
Complex Fenchel-Nielsen coordinates $(l_{\alpha}^\C,\tau_{\alpha}^\C)$ are global holomorphic coordinates on $\QFS$. 
They restrict to the classical
Fenchel-Nielsen coordinates $(l_{\alpha},\tau_{\alpha})$ on the Fuchsian slice $\FS$.
\end{theorem}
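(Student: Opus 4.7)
The plan is to produce an explicit inverse of $(l_\alpha^\C, \tau_\alpha^\C) : \QFS \to (\C_+)^N \times \C^N$ via a pants-reconstruction procedure, then verify holomorphicity in both directions. The assertion about the Fuchsian slice will drop out from the constructions.

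First I would verify that each $l_\gamma^\C$ is a holomorphic function on $\QFS$. This follows from the holomorphicity of the holonomy map $\hol : \QFS \to \XSG$ (section \ref{holonomy}) combined with the formula $2\cosh(l_\gamma^\C/2) = \pm \mathrm{tr}(\rho(\gamma))$: on $\QFS$ the holonomy of any essential simple closed curve is loxodromic, so $\mathrm{tr}(\rho(\gamma)) \notin [-2,2]$ and a holomorphic branch of $\mathrm{arccosh}$ picks out $l_\gamma^\C$ as a holomorphic function of a regular function on the character variety. Next I would define the complex twist flow $\mathrm{tw}_\gamma^\C$ directly on the level of holonomy representations: after cutting along the simple closed curve $\gamma$, premultiply (respectively postmultiply) the holonomy of loops crossing $\gamma$ by the element of the one-parameter subgroup having the same axis as $\rho(\gamma)$ and complex displacement $\tau^\C$. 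This is visibly holomorphic in $\tau^\C$, reduces to the usual Fenchel-Nielsen twist for $\tau^\C \in \R$, and to Thurston's projective grafting for $\tau^\C \in i\R_{>0}$. In particular each complex twist vector field $\partial/\partial\tau_\gamma^\C$ is holomorphic on $\QFS$.

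To build the inverse of $(l_\alpha^\C, \tau_\alpha^\C)$, given target data $(\ell_i, t_i) \in (\C_+)^N \times \C^N$, I would first invoke the existence-uniqueness proposition for hyperbolic pairs of pants to produce, for each of the $M = 2g-2$ topological pants $P_k$ in the decomposition, a representation $\rho_k : \pi_1(P_k) \to \PSL_2(\C)$ realizing the prescribed complex boundary lengths (unique up to conjugation). I would then amalgamate these into a representation $\rho : \pi_1(S) \to \PSL_2(\C)$ by gluing adjacent pants along their common boundary curve using the prescribed complex twist parameter $t_i$, which is realized concretely as the complex distance between suitable perpendicular geodesics in $\HH^3$. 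By construction $(l_\alpha^\C, \tau_\alpha^\C)$ returns $(\ell_i, t_i)$, and uniqueness in the pants reconstruction and in the gluing gives a set-theoretic inverse, hence injectivity of $(l_\alpha^\C, \tau_\alpha^\C)$. The restriction statement is then automatic: on a Fuchsian structure the holonomy is conjugate into $\PSL_2(\R)$, forcing $l_\gamma^\C$ to be real and positive, and real twist parameters produce purely Fuchsian gluings reproducing the classical Fenchel-Nielsen coordinates.

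The main obstacle is showing that the reconstructed representation $\rho$ actually lies in $\QFS$ on the entire target $(\C_+)^N \times \C^N$, rather than merely defining some representation into $\PSL_2(\C)$ that might fail to be discrete, faithful, or quasi-Fuchsian. Near the Fuchsian locus this is clear by openness of $\QFS$ inside the character variety, but for globality one needs the delicate estimates of \cite{K3} and \cite{tan}, controlling the hyperbolic and bending data of the reconstructed manifold along arbitrary simple closed curves as the complex parameters vary. Once bijectivity onto $\QFS$ is established, holomorphicity of $(l_\alpha^\C, \tau_\alpha^\C)$ and of its inverse follows from the holomorphicity of the constituent ingredients via the inverse function theorem, with Jacobian nondegeneracy verified on the totally real Fuchsian slice by Wolpert's real Fenchel-Nielsen theory.
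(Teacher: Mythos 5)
First, a point of order: the paper does not actually prove this theorem --- it is stated as the conclusion of an expository outline and attributed to Kourouniotis \cite{K3} and Tan \cite{tan}, to whom the reader is referred for details. Your sketch follows the same general route as those references (holomorphicity of $l_\gamma^\C$ via traces of loxodromic holonomies, reconstruction of pants representations from complex boundary lengths, amalgamation with a complex twist along each gluing curve), and those individual ingredients are correct, as is the deduction of the restriction statement on $\FS$.

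There is, however, a genuine error in how you organize the argument. You propose to build a two-sided inverse defined on the entire model space $(\C_+)^N\times\C^N$, and you identify as ``the main obstacle'' the task of showing that the reconstructed representation is quasi-Fuchsian for \emph{every} choice of data there. That statement is false, so no estimates will rescue it: unlike the real case, the complex Fenchel--Nielsen coordinates are not surjective onto $(\C_+)^N\times\C^N$. Already pure bending of a Fuchsian structure (real lengths, purely imaginary twist parameters) exits $\QFS$ once the bending angles are large enough; the bent plane fails to embed and the resulting representation need not even be discrete. ``Global holomorphic coordinates'' here means a biholomorphism of $\QFS$ onto an open \emph{proper} subset of $\C^{2N}$, so what must be proved --- and what \cite{K3} and \cite{tan} actually prove --- is injectivity and immersivity of $(l_\alpha^\C,\tau_\alpha^\C)$ on all of $\QFS$, not surjectivity of the reconstruction. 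Relatedly, checking nondegeneracy of the Jacobian only on the Fuchsian slice via Wolpert's theory yields a local biholomorphism merely on some neighborhood of $\FS$: the Jacobian determinant is holomorphic, and nonvanishing on a maximal totally real submanifold does not propagate (analytic continuation forces identically vanishing functions to vanish everywhere; it does not force nonvanishing ones to stay nonzero). The global statements on $\QFS$ are precisely the nontrivial content that must be imported from the cited works.
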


\subsection{Platis' symplectic structure}

In \cite{platis}, Platis develops a complex version of Wolpert's theory on the quasi-Fuchsian space, here are some of his results.

First there is a complex version of Wolpert's formula \ref{wolfor}:
\begin{theorem}
There exists a complex symplectic structure $\omega_P$ on $\QFS$ such that if $\gamma$ and $\gamma'$ are distinct nontrivial free homotopy classes of simple closed 
curves on $S$, then at any point $Z\in\QFS$ with holonomy $\rho$
\begin{equation}{\omega_P}\left(\frac{\partial}{\partial \tau_\gamma^\C},\frac{\partial}{\partial \tau_{\gamma'}^\C}\right) = \sum_{p\in (\gamma \cap {\gamma'})}
\cosh \sigma_p~,\end{equation} where $\sigma_p$ is the complex distance between the geodesics $\rho(\gamma)$ and $\rho(\gamma')$.
\end{theorem}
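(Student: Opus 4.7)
The plan is to take $\omega_P$ to be the Goldman complex symplectic form $\omega_G$ itself (pulled back from $\XS$ via holonomy) and to derive the formula by analytic continuation from Wolpert's real formula (Theorem \ref{wolfor}) along the Fuchsian slice. Concretely, I would proceed in four steps.

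\emph{Step 1: setting up $\omega_P$.} Set $\omega_P := \omega_G$ on $\QFS$. Since $\hol : \CPS \to \XS$ is a local biholomorphism and $\omega_G$ is holomorphic on the smooth locus of $\XS$, this is a holomorphic complex symplectic form on $\QFS$. Its restriction to $\FS$ agrees with $\omega_{WP}$ by Goldman's theorem $(\sigma_{\mathcal{F}})^*\omega_G = \omega_{WP}$ of section \ref{fuchhol}.

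\emph{Step 2: both sides are holomorphic functions on $\QFS$.} By construction the complex Fenchel-Nielsen coordinates $(l_\alpha^\C,\tau_\alpha^\C)$ are global holomorphic coordinates, so the vector fields $\partial/\partial\tau_\gamma^\C$ are holomorphic vector fields on $\QFS$, and thus the left-hand side $\omega_G(\partial/\partial\tau_\gamma^\C,\partial/\partial\tau_{\gamma'}^\C)$ is a holomorphic function of $Z$. For the right-hand side, the topological intersection $\gamma\cap\gamma'$ on $S$ is a finite combinatorial datum independent of $Z$; for each intersection point $p$, the corresponding pair of lifted axes of $\rho(\gamma)$ and $\rho(\gamma')$ in $\HH^3$ is determined by fixed elements of $\pi_1(S)$, and the complex distance $\sigma_p$ between these two axes can be written as a rational (algebraic) function of the matrix entries of the two holonomy elements, via the trace identities recalled in section \ref{reret} together with the classical formula relating $\cosh\sigma$ to traces of two loxodromic elements (compare the cosine rule above). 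Since $\hol$ is holomorphic, each $\cosh\sigma_p$ is a holomorphic function on $\QFS$, and so is their finite sum.

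\emph{Step 3: agreement on the Fuchsian slice.} On $\FS$, the complex twist vector fields $\partial/\partial\tau_\gamma^\C$ restrict to the real twist vector fields $\partial/\partial\tau_\gamma$ (this is built into the definition of complex twisting, which reduces on $\FS$ to ordinary Fenchel-Nielsen twisting along $\mathrm{Re}(\tau_\gamma^\C)$). Moreover, when both geodesics lie in a common hyperbolic plane $\HH^2\subset\HH^3$, their common perpendicular degenerates to the intersection point, the real distance vanishes, and the complex distance reduces to $\sigma_p = i\theta_p$, so $\cosh\sigma_p=\cos\theta_p$. Combined with $\omega_G|_{\FS}=\omega_{WP}$ and Wolpert's formula (Theorem \ref{wolfor}), this shows that both sides of the claimed identity are equal as real-analytic functions on $\FS$.

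\emph{Step 4: analytic continuation.} As noted in sections \ref{fqf} and \ref{fuchhol}, $\sigma_{\mathcal{F}}:\TS\to\CPS$ is a maximal totally real analytic embedding, so $\FS$ is a maximal totally real submanifold of the connected complex manifold $\QFS$. Two holomorphic functions on $\QFS$ that agree on $\FS$ must coincide everywhere, which yields the formula.

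The main obstacle in this plan is Step~2, namely writing each summand $\cosh\sigma_p$ explicitly as a holomorphic function of the holonomy. One has to identify, in a $Z$-independent combinatorial manner, the pair of elements of $\pi_1(S)$ whose axes meet at the lift of $p$, and then express the complex distance between these two axes through a trace-polynomial identity in $\SL_2(\C)$. Once that algebraic expression is in hand, holomorphicity and the reduction to $\cos\theta_p$ on $\FS$ are both immediate, and the rest of the argument is pure analytic continuation.
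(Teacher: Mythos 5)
The paper does not prove this statement at all: it is quoted from Platis \cite{platis} as part of the survey in section 5.3, and the only thing the paper itself establishes about $\omega_P$ is Corollary \ref{za} ($\omega_P=\omega_G$ on $\QFS$), obtained via the Darboux--coordinate Theorem \ref{main3}. So there is no internal proof to compare yours against line by line. What I can say is that your route --- take $\omega_P:=\omega_G$, verify the identity on the Fuchsian slice using Wolpert's Theorem \ref{wolfor}, and propagate it by analytic continuation off the maximal totally real submanifold $\FS\subset\QFS$ --- is exactly the method the paper deploys for Theorem \ref{main3} and Corollary \ref{za}, and it is sound in outline. Steps 1, 3 and 4 are fine: at a Fuchsian point the holomorphic field $\partial/\partial\tau_\gamma^\C$ is the $(1,0)$-part of the real twist field, and since $\omega_G$ is of type $(2,0)$ its value on $(1,0)$-parts equals its value on the underlying real vectors, so Goldman's ${(\sigma_{\cal F})}^*\omega_G=\omega_{WP}$, Wolpert's formula, and $\cosh(i\theta_p)=\cos\theta_p$ give equality on $\FS$; the continuation step is the function version of the argument in section \ref{ancont}, applied on the connected manifold $\QFS$.

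The one place where real work remains is the one you flag yourself, Step 2, and it should be made explicit because it is where the content of Platis' theorem actually lives. You need two things. First, a $Z$-independent labelling of the index set: take $\gamma,\gamma'$ in minimal position; each intersection point $p$ determines, up to the diagonal action of $\pi_1(S)$, a pair of conjugates of $\gamma$ and $\gamma'$ whose axes under $\rho$ are the two geodesics in question, and this combinatorial pairing is constant on $\QFS$ because only the marking is involved. Second, the trace identity for the complex distance $\sigma$ between the axes of two loxodromics $A,B\in\SL_2(\C)$, which up to sign conventions reads
\begin{equation*}
\cosh\sigma \;=\; \frac{2\,\mathrm{tr}(AB)-\mathrm{tr}(A)\,\mathrm{tr}(B)}{\sqrt{\mathrm{tr}^2(A)-4}\,\sqrt{\mathrm{tr}^2(B)-4}}\,;
\end{equation*}
to make the right-hand side a single-valued holomorphic function on $\QFS$ you must use that holonomies of projective structures lift to $\SL_2(\C)$, that $\mathrm{tr}^2-4$ never vanishes on $\QFS$ (every $\rho(\gamma)$ is loxodromic there), and that $\QFS$ is simply connected (it is a cell), so the square roots admit global holomorphic branches normalized on $\FS$. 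With those two points supplied, both sides of the identity are holomorphic functions agreeing on $\FS$, and your Step 4 closes the argument. In short: correct strategy, fully in the spirit of the paper's own analytic-continuation results, with one concrete lemma (the trace formula and its branch bookkeeping) still to be written down.
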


He also shows the complex analogous of theorem \ref{eree} in the complex symplectic manifold $(\QFS,\omega_P)$:
\begin{theorem}
Let $\gamma$ be any nontrivial free homotopy class of simple closed curves on $S$. The complex flow of the Hamiltonian vector field $-X_{l_\gamma^\C}$ is precisely
the complex twist flow $\mathrm{tw}_\gamma^\C$.
\end{theorem}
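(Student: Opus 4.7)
The plan is to deduce this from the two prior results of section \ref{compsst}: Theorem \ref{main3}, asserting that complex Fenchel-Nielsen coordinates are Darboux for $\omega_G$ on $\QFS$, and Corollary \ref{za}, identifying Platis' form $\omega_P$ with $\omega_G$. With these in hand, the theorem reduces to a one-line symplectic computation; no further analysis is required.

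First I would extend $\gamma$ to a pants decomposition $\alpha = (\alpha_1, \dots, \alpha_N)$ of $S$ with $\alpha_1 = \gamma$. Theorem \ref{main3} applied to $\alpha$ then gives
\begin{equation*}
\omega_G = \sum_{i=1}^N dl_{\alpha_i}^\C \wedge d\tau_{\alpha_i}^\C
\end{equation*}
globally on $\QFS$. Contracting with the holomorphic vector field $\partial/\partial \tau_\gamma^\C$ immediately yields $i_{\partial/\partial \tau_\gamma^\C}\,\omega_G = -dl_\gamma^\C$, hence by the defining relation of the Hamiltonian vector field,
\begin{equation*}
\frac{\partial}{\partial \tau_\gamma^\C} = -X_{l_\gamma^\C}
\end{equation*}
with respect to $\omega_G$, and therefore also with respect to $\omega_P$ by Corollary \ref{za}. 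The complex flow of the left-hand side is by construction the complex twist flow $\mathrm{tw}_\gamma^\C$, so integrating gives the claim.

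Alternatively, one can argue by direct analytic continuation in the spirit of section \ref{compsst}, avoiding explicit use of Theorem \ref{main3}. Both vector fields $\partial/\partial \tau_\gamma^\C$ and $-X_{l_\gamma^\C}$ (with respect to $\omega_P$) are holomorphic on $\QFS$; on the Fuchsian slice $\FS$, which embeds in $\QFS$ as a maximal totally real analytic submanifold (see section \ref{fqf}), they restrict respectively to $\partial/\partial \tau_\gamma$ and to the Hamiltonian vector field $-X_{l_\gamma}$ for $\omega_P|_{\FS}$. Wolpert's Theorem \ref{eree} identifies these two real vector fields on $\FS$, and uniqueness of holomorphic extension from a maximal totally real submanifold forces agreement on all of $\QFS$. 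The only genuinely delicate point in this second route is checking that $\omega_P$ restricts correctly to the Weil-Petersson form (up to the appropriate normalization) on $\FS$, so that Wolpert's result actually applies at the Fuchsian locus; once this is granted, the rest is a formal consequence of analytic continuation.
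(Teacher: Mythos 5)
Your first argument is correct and is essentially the paper's own derivation: the statement is quoted from Platis, and the paper recovers it as an immediate consequence of Theorem \ref{main3} (contract the Darboux expression $\omega_G=\sum_i dl_{\alpha_i}^\C\wedge d\tau_{\alpha_i}^\C$ with the coordinate field $\partial/\partial\tau_\gamma^\C$ for a pants decomposition containing $\gamma$) together with Corollary \ref{za}. Your alternative route via analytic continuation is not the one taken and is where the genuine subtleties sit --- beyond checking that $\omega_P$ restricts to $\omega_{WP}$ on $\FS$, one must also verify that the complex Hamiltonian field $X_{l_\gamma^\C}$ is tangent to the totally real slice $\FS$ before it can be compared with Wolpert's real Hamiltonian field there --- so it should be treated as a sketch rather than a proof.
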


As in the Fuchsian case, it follows from these two theorems that complex Fenchel-Nielsen length functions associated to a pants decomposition define
a complex Hamiltonian integrable system. Furthermore, he proves that the striking theorem \ref{FNdarb} is still true in its complex version on $(\QFS,\omega_P)$:
\begin{theorem}
If $\alpha$ is any pants decomposition of $S$, complex Fenchel-Nielsen coordinates are Darboux
coordinates for the complex symplectic structure $\omega_P$: \begin{equation}\omega_P = \sum_{i=1}^N dl_{\alpha_i}^\C \wedge d\tau_{\alpha_i}^\C~.\end{equation}
\end{theorem}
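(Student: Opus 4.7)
The plan is to use an analytic continuation argument analogous to the author's strategy for Theorem \ref{main3}, seeded by Wolpert's real Darboux Theorem \ref{FNdarb}. Set
\begin{equation*}
\Omega := \sum_{i=1}^N dl_{\alpha_i}^\C \wedge d\tau_{\alpha_i}^\C.
\end{equation*}
Since the complex Fenchel-Nielsen functions $l_{\alpha_i}^\C$ and $\tau_{\alpha_i}^\C$ are holomorphic on the connected complex manifold $\QFS$ (by the Kourouniotis-Tan theorem), $\Omega$ is a holomorphic $2$-form. Platis' form $\omega_P$ is also holomorphic by construction, so $\omega_P - \Omega$ is holomorphic. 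The Fuchsian slice $\FS$ is a maximal totally real analytic submanifold of $\QFS$ (see section \ref{fuchhol}), so by the standard analytic continuation principle it will suffice to show that $\omega_P - \Omega$ pulls back to zero on $\FS$.

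On $\FS$, complex Fenchel-Nielsen parameters specialize tautologically to the classical ones: $l_{\alpha_i}^\C$ becomes real-valued and equals $l_{\alpha_i}$ (no torsion in the Fuchsian case), while $\tau_{\alpha_i}^\C$ equals $\tau_{\alpha_i}$ (no bending). Wolpert's Theorem \ref{FNdarb} then gives $\Omega|_{\FS} = \sum dl_{\alpha_i} \wedge d\tau_{\alpha_i} = \omega_{WP}$. For $\omega_P$, I would read $\omega_P|_{\FS} = \omega_{WP}$ directly from the defining cosh-formula: in the Fuchsian case the axes $\rho(\gamma)$ and $\rho(\gamma')$ lie in a common totally geodesic plane $\HH^2 \subset \HH^3$, so the complex distance between them is purely imaginary, $\sigma_p = i\theta_p$, whence $\cosh \sigma_p = \cos \theta_p$. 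Platis' cosh formula thus reduces to Wolpert's cosine formula (Theorem \ref{wolfor}) on $\FS$; combined with the fact that the complex twist vector fields $\partial/\partial\tau_\gamma^\C$ restrict to the classical $\partial/\partial\tau_\gamma$ along $\FS$, this pins down the restriction of $\omega_P$ as a real symplectic form on $\FS$, and it coincides with $\omega_{WP}$.

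The hard part will be making the analytic continuation step rigorous for a holomorphic $2$-form: unlike for functions, one needs the vanishing of the pullback of $\omega_P - \Omega$ to $\FS$ to control its values on pairs of tangent vectors to $\QFS$ along $\FS$, not merely on pairs tangent to $\FS$. The resolution is that the complexification $T\FS \otimes_\R \C$ spans all of $T\QFS|_{\FS}$ precisely because $\FS$ is maximal totally real, so a holomorphic form is determined by $\C$-bilinear extension of its restriction to the real tangent bundle of $\FS$; equivalently, in local holomorphic coordinates on $\QFS$ adapted to $\FS = \{z \in \R^n\} \subset \C^n$, the coefficients of a holomorphic $2$-form vanish identically iff they vanish on $\R^n$. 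This is essentially the analytic continuation lemma the author invokes in section \ref{ancont}, and once it is in hand the proof concludes at once.
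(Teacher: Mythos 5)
Your argument is essentially sound, but note that the paper does not actually prove this statement: it is quoted as a theorem of Platis from \cite{platis}, and the paper's own logic runs the other way around --- Theorem \ref{main3} establishes $\omega_G=\sum_i dl_{\alpha_i}^\C\wedge d\tau_{\alpha_i}^\C$ by analytic continuation seeded with Goldman's identity $(\sigma_{\cal F})^*\omega_G=\omega_{WP}$, and Corollary \ref{za} then combines that with Platis' theorem (the very statement at hand) to conclude $\omega_P=\omega_G$. What you propose is an independent proof of Platis' theorem itself, using the same continuation machinery (a closed $(2,0)$-form whose pullback to the maximal totally real analytic slice $\FS$ vanishes must vanish on all of the connected manifold $\QFS$) but seeded differently: you restrict Platis' $\cosh$-formula to the Fuchsian slice, where the axes are coplanar so $\sigma_p=i\theta_p$ and $\cosh\sigma_p=\cos\theta_p$ recovers Wolpert's cosine formula, and you match this against $\iota^*\bigl(\sum_i dl_{\alpha_i}^\C\wedge d\tau_{\alpha_i}^\C\bigr)=\sum_i dl_{\alpha_i}\wedge d\tau_{\alpha_i}=\omega_{WP}$, which uses the Kourouniotis--Tan statement that the complex coordinates restrict to the classical ones on $\FS$. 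This is a legitimate and rather elegant route; together with Theorem \ref{main3} it would also reprove Corollary \ref{za} without citing Platis' Darboux theorem as an input.

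Two points are elided and should be made explicit. First, knowing that $\iota^*\omega_P$ and $\omega_{WP}$ agree on all pairs $\left(\partial/\partial\tau_\gamma,\partial/\partial\tau_{\gamma'}\right)$ only ``pins down'' $\iota^*\omega_P$ if the twist vector fields associated to simple closed curves span $T_X\FS$ at every point $X$; by Wolpert's duality $\partial/\partial\tau_\gamma=-X_{l_\gamma}$ this is equivalent to the differentials $dl_\gamma$ spanning the cotangent space, which is true and standard but is a genuine input that must be cited. Second, the claim that $\partial/\partial\tau_\gamma^\C$ along $\FS$ is the $(1,0)$-part of the classical twist field --- so that evaluating the $(2,0)$-form $\omega_P$ on the complex twist vectors computes the pullback $\iota^*\omega_P$ on the real ones --- deserves a sentence of justification; it follows from the description of complex twisting by $t_1+it_2$ as twisting by $t_1$ followed by grafting by $t_2$. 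With those two items supplied, the proof is complete.
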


\section{Comparing symplectic structures}\label{compsst}

\subsection{Analytic continuation}\label{ancont}

We are going to show the following proposition, which implies that two complex symplectic structures agree on $\CPS$ if and only if they agree
in restriction to tangent vectors to the Fuchsian slice $\FS$:

\begin{propo}\label{puit}
 Let $\omega$ be a closed $(2,0)$-form on $\CPS$ and $\sigma_{\cal F} : \TS \to \CPS$ be the Fuchsian section (as in (\ref{fufuch})).
If ${\sigma_{\cal F}}^*\omega$ vanishes
on $\TS$, then $\omega$ vanishes on $\CPS$.
\end{propo}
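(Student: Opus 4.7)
The plan is to argue by analytic continuation, exploiting the fact, stated earlier in the excerpt, that the Fuchsian section $\sigma_{\cal F}$ is a maximal totally real real-analytic embedding of $\TS$ into $\CPS$.

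First I would upgrade ``closed $(2,0)$-form'' to ``holomorphic $(2,0)$-form''. Decomposing $d\omega = \partial\omega + \bar\partial\omega$ and using that $\partial\omega$ has type $(3,0)$ while $\bar\partial\omega$ has type $(2,1)$, both summands must vanish separately. In particular $\bar\partial\omega = 0$, so in any local holomorphic chart $(z_1,\dots,z_n)$ on $\CPS$ (with $n = 6g-6$) one may write $\omega = \sum_{i<j} f_{ij}(z)\, dz_i \wedge dz_j$ with $f_{ij}$ holomorphic.

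Next, fix $X \in \TS$ and choose real-analytic coordinates $(x_1,\dots,x_n)$ near $X$ together with holomorphic coordinates $(z_1,\dots,z_n)$ on $\CPS$ near $\sigma_{\cal F}(X)$. In these coordinates, $\sigma_{\cal F}$ is a real-analytic map $x \mapsto z(x)$ whose complex Jacobian $(\partial z_i/\partial x_k)$ is invertible at every point, because $\sigma_{\cal F}$ is totally real and its image has the maximal possible real dimension. Real-analyticity then lets us complexify: $\sigma_{\cal F}$ extends to a holomorphic map $\tilde{\sigma}_{\cal F}: U \to \CPS$ from an open neighborhood $U$ of $\TS$ in its complexification, and invertibility of the Jacobian makes $\tilde{\sigma}_{\cal F}$ a local biholomorphism after shrinking $U$, with image covering an open neighborhood of the Fuchsian slice $\FS$.

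Now pull $\omega$ back by $\tilde{\sigma}_{\cal F}$. The result $\tilde{\sigma}_{\cal F}^*\omega$ is a holomorphic $(2,0)$-form on $U$ whose restriction to the real locus $\TS \subset U$ agrees with $\sigma_{\cal F}^*\omega \equiv 0$. In a local holomorphic chart $(\zeta_1,\dots,\zeta_n)$ of $U$ adapted to $\TS$ (so that $\TS$ reads locally as $\{\mathrm{Im}\,\zeta = 0\}$), write $\tilde{\sigma}_{\cal F}^*\omega = \sum_{i<j} g_{ij}(\zeta)\, d\zeta_i \wedge d\zeta_j$: the vanishing on $\TS$ forces $g_{ij}(\zeta) = 0$ for $\zeta \in \R^n$, and the identity principle for holomorphic functions of several variables then gives $g_{ij} \equiv 0$ on $U$. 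Hence $\omega$ vanishes on the open set $\tilde{\sigma}_{\cal F}(U) \subset \CPS$. Finally, since $\omega$ is a holomorphic section of $\Lambda^{2,0} T^*\CPS$, vanishing on a nonempty open subset of the connected complex manifold $\CPS$, the identity principle forces $\omega \equiv 0$ everywhere on $\CPS$.

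The only substantive point is the local complexification of $\sigma_{\cal F}$ to the biholomorphism $\tilde{\sigma}_{\cal F}$, which is available precisely because the Fuchsian section is simultaneously real-analytic and maximal totally real; once that is in hand, everything else is a routine application of the identity theorem.
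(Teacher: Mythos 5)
Your proof is correct and follows essentially the same route as the paper: both arguments rest on the fact that the Fuchsian slice is a maximal totally real analytic submanifold, use a local holomorphic complexification/adapted chart in which it reads as $\R^n\subset\C^n$, observe that a closed $(2,0)$-form has holomorphic coefficients whose vanishing on the real locus is forced by the vanishing of the pullback, and conclude by the identity principle. The only cosmetic difference is that the paper factors the argument through a separate lemma on holomorphic functions vanishing on a totally real submanifold, whereas you absorb that step into the complexified section $\tilde{\sigma}_{\cal F}$.
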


The proof of this proposition is based on analytic continuation. In order to use this argument, we recall a few definitions and show some elementary facts
regarding totally real submanifolds of complex manifolds.

\begin{defin}
Let $M$ be a complex manifold and $N \subset M$ be a real submanifold. $N$ is called \emph{totally real} if the following holds:
\begin{equation}\forall x\in N,~~ T_xN \cap JT_xN = \{0\}~,\end{equation}
where $J$ is the almost complex structure on $M$.  \label{almcs}
\end{defin}

If moreover, $N$ has maximal dimension $\dim_\R N = \dim_\C M$, we say that $N$ is a \emph{maximal totally real submanifold} of $M$. There are
several characterizations of maximal totally real analytic submanifolds, seemingly stronger than the definition, as in the following:

\begin{propo}\label{atrs}
Let $M$ be a complex manifold of dimension $n$ and $N \subset M$ be a real submanifold. The following are equivalent:
\begin{itemize}
 \item{(i)} $N$ is a maximal totally real analytic submanifold of $M$.
 \item{(ii)} $N \subset M$ locally looks like $\R^n \subset \C^n$. More precisely: for any $x\in N$, there is a holomorphic chart $z : U \rightarrow V$ 
 where $U$ is an open set in $M$ containing $x$ and $V$ is an open set in $\C^n$, such that $z(U\cap N) = V \cap \R^n$.
 \item{(iii)} There is an antiholomorphic involution $\chi : M' \rightarrow M'$ where $M'$ is a neighborhood of $N$ in $M$, such that $N$ is the set 
 of fixed points of $\chi$.
\end{itemize}
\end{propo}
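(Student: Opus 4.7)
The plan is to close the circle of implications via (ii) $\Rightarrow$ (i), (ii) $\Rightarrow$ (iii), (i) $\Rightarrow$ (ii), and (iii) $\Rightarrow$ (i). The direction (ii) $\Rightarrow$ (i) is immediate from the definition: in a chart $z : U \to V$ with $z(U\cap N) = V \cap \R^n$, the tangent space to $N$ at any point corresponds to $\R^n \subset \C^n$ and $J$ to multiplication by $i$, so $T_xN \cap JT_xN$ is identified with $\R^n \cap i\R^n = \{0\}$, and $N$ inherits a real analytic atlas from the chart. For (ii) $\Rightarrow$ (iii), I would define $\chi$ locally in each such chart as complex conjugation and then glue: if $\psi : V_1 \to V_2$ is a biholomorphism between open subsets of $\C^n$ sending $V_1 \cap \R^n$ into $V_2 \cap \R^n$, then $z \mapsto \overline{\psi(\bar z)}$ is holomorphic and agrees with $\psi$ on $V_1 \cap \R^n$, so by the identity principle $\overline{\psi(\bar z)} = \psi(z)$ throughout a neighborhood of $V_1 \cap \R^n$. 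This Schwarz-reflection argument shows that $\psi$ intertwines the two local conjugations, so they patch into a well-defined antiholomorphic involution on a neighborhood $M'$ of $N$ whose fixed locus is exactly $N$.

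For (i) $\Rightarrow$ (ii), near a point $x\in N$ I would pick a real analytic parametrization $\phi : U_0 \subset \R^n \to N \subset M$ with $\phi(0)=x$. Writing $\phi$ in a local holomorphic chart of $M$ around $x$, its components are convergent power series in the real variables; substituting complex variables for the real ones yields a holomorphic extension $\Phi : \widetilde{U_0} \subset \C^n \to M$ on some complex neighborhood of $U_0$. The differential $d\Phi_0 : \C^n \to T_x M$ is the unique $\C$-linear extension of $d\phi_0 : \R^n \to T_xN \subset T_xM$, and the maximal totally real condition says exactly that $T_xN$ is a real form of the complex vector space $T_xM$, which is equivalent to $d\Phi_0$ being a $\C$-linear isomorphism. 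The holomorphic inverse function theorem then produces a local biholomorphism $z = \Phi^{-1}$ carrying a neighborhood of $x$ in $M$ onto an open set in $\C^n$ and mapping $N$ into $\R^n$.

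For (iii) $\Rightarrow$ (i), since $\chi$ is antiholomorphic it is in particular real analytic, so its fixed locus $N$ is a real analytic subset of $M$. At $x \in N$ the differential $d\chi_x$ is a $\C$-antilinear involution of $T_xM$, i.e. $d\chi_x \circ J = -J \circ d\chi_x$ and $(d\chi_x)^2 = \mathrm{id}$. Hence $T_xM$ splits as $E_+ \oplus E_-$ into the $\pm 1$-eigenspaces of $d\chi_x$, and since $J$ interchanges $E_+$ and $E_-$, we get $\dim_\R E_+ = \dim_\R E_- = n$. The implicit function theorem applied to $\chi - \mathrm{id}$ exhibits $N$ near $x$ as a real analytic submanifold with $T_xN = E_+$, and then $JT_xN = E_-$ intersects $T_xN$ only at $0$, which is precisely (i). The main obstacle is the complexification step in (i) $\Rightarrow$ (ii): one must invoke the standard fact that a real analytic map extends uniquely to a holomorphic map on a complex neighborhood of its domain, and recognize that the ``maximal totally real'' hypothesis is precisely the condition which makes the extended differential a $\C$-linear isomorphism.
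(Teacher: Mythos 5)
Your proof is correct and follows essentially the same route as the paper: the key direction (i) $\Rightarrow$ (ii) is established in both cases by complexifying a real-analytic parametrization of $N$ (substituting complex for real variables in its power series) and applying the holomorphic inverse function theorem, and you rightly make explicit the point the paper glosses over, namely that the maximal totally real hypothesis is exactly what makes $d\Phi_0$ a $\C$-linear isomorphism. The paper dismisses the remaining implications as ``fairly easy''; your Schwarz-reflection gluing for (ii) $\Rightarrow$ (iii) and the eigenspace decomposition of the antilinear involution $d\chi_x$ for (iii) $\Rightarrow$ (i) are correct ways of supplying them.
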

If $N$ satisfies one (equivalently all) of these conditions, $M$ is said to be a \emph{complexification} of $N$. Let us mention that any
real-analytic manifold can be complexified.

\begin{proof}
It is fairly easy to see that both $(ii)$ and $(iii)$ imply $(i)$, and that in fact $(ii)$ and $(iii)$ are equivalent. Let us show that $(i)$ implies $(ii)$.
Using holomorphic charts, it is clearly enough to prove this in the case where $N$ is a maximal totally real analytic submanifold of $\C^n$. Let $m\in N \subset \C^n$, 
there is a real-analytic parametrization $\varphi : D \to N$, where $D$ is a small open disk centered at the origin in $\R^n$, such that $\varphi(0) = m$ and 
$d \varphi(0) \neq 0$. The map $\varphi$ is given by a convergent power series $\varphi(x) = \sum_{|\alpha| = n} a_\alpha x^\alpha$ for all $x\in D$, 
where the sum is taken over all multi-indices $\alpha$ of length $n$, and the $a_\alpha$ are coefficients in $\C^n$. In order to extend $\varphi$ to a holomorphic map
$\Phi : D' \to M$ where $D'$ is the disk in $\C^n$ such that $D = D'\cap \R^n$, we can just replace $x\in D$ by $z \in D'$ in the expression of $\varphi$:
define $\Phi(z) = \sum_{|\alpha| = n} a_\alpha z^\alpha$. This power series converges in $D'$ because it has the same radius of convergence as its real counterpart.
Moreover, if $D'$ is small enough, $\Phi$ is a biholomorphism onto its image because $d\Phi(0) = d\varphi(0) \neq 0$. This shows that $(i)$ implies $(ii)$ (just
take the chart given by $\Phi^{-1}$)\footnote{Note
that the simplicity of this proofs relies on a little trick: the actual complexification of $\varphi$ is a map $\varphi : D'
\rightarrow \C^{2n}$ (and not $\C^n$), where $a_\alpha$ is seen as a real vector in $\C^n$.}.
\end{proof}

Keeping in mind that we want to consider the Fuchsian slice in $\CPS$, we make this last general observation on totally real submanifolds:

\begin{propo}
Let $V$ be a complex manifold. The diagonal $\Delta$ in $V \times \overline{V}$\footnote{$\overline{V}$ denotes the manifold
$V$ equipped with the opposite complex structure.} is a maximal totally real analytic submanifold.
\end{propo}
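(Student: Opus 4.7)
The plan is to invoke characterization (iii) of Proposition \ref{atrs}: it suffices to produce an antiholomorphic involution of $V \times \overline{V}$ (or of some neighborhood of $\Delta$) whose fixed-point set is exactly the diagonal. Since the dimension count $\dim_\R \Delta = \dim_\R V = \dim_\C(V\times \overline V)$ is immediate, this is the only thing to check.

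The natural candidate is the swap map
\begin{equation*}
\chi : V \times \overline V \longrightarrow V \times \overline V, \qquad \chi(v,w) = (w,v).
\end{equation*}
It is obviously a smooth involution, and its fixed-point set is exactly $\Delta$, so everything reduces to verifying that $\chi$ is antiholomorphic. This is where the distinction between $V$ and $\overline V$ becomes essential: the almost complex structure on the product is $J_{V\times\overline V} = (J,-J)$, while the differential of $\chi$ at a diagonal point $(v,v)$ is simply the swap of the two tangent factors of $T_v V \oplus T_v V$. A one-line computation then shows that swap and $(J,-J)$ anticommute, which is precisely the statement that $d\chi$ is $\C$-antilinear, i.e. that $\chi$ is antiholomorphic.

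Applying Proposition \ref{atrs}, $\Delta$ is a maximal totally real analytic submanifold of $V \times \overline V$, as desired. The only subtlety here is conceptual rather than computational: one has to correctly track the sign flip of the complex structure in the second factor, which is exactly what turns the swap (a biholomorphism of $V\times V$) into an antiholomorphism of $V\times \overline V$. Once this bookkeeping is in place there is no obstacle.
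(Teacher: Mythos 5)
Your proof is correct and is essentially identical to the paper's: both invoke characterization $(iii)$ of Proposition \ref{atrs} with the swap involution $\chi(x,y)=(y,x)$, which is antiholomorphic precisely because the second factor carries the opposite complex structure. The only cosmetic remark is that the antilinearity of $d\chi$ should be checked at every point of $V\times\overline V$, not just on the diagonal, but the computation is verbatim the same everywhere.
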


\begin{proof}
This is a direct consequence of characterization $(iii)$ in the previous proposition: just take the antiholomorphic involution $\chi : V \times \bar{V} \to
V \times \bar{V}$ defined by $\chi(x,y) = (y,x)$.
\end{proof}

An immediate application of this is that the Fuchsian slice $\FS$ is a maximal totally real analytic submanifold of $\CPS$ (as was already
pointed out in section \ref{fuchhol}): it is the image of the diagonal of $\TS \times {\cal T}(\overline{S})$\footnote{
Recall that ${\cal T}(\overline{S})$ is canonically identified with $\overline{\TS}$.} by the holomorphic embedding $\beta^+$
 (see section \ref{fqf}).
Another way to see this is that the quasi-Fuchsian space $\QFS = \mathrm{Im}(\beta^+) \subset \CPS$ is equipped with a canonical antiholomorphic involution,
which justs consists in ``turning a quasi-Fuchsian $3$-manifold upside down'', and $\FS$ is the set of fixed points of this involution.

Now, we prove this first elementary analytic continuation theorem:

\begin{propo}
Let $M$ be a connected complex manifold and $N \subset M$ be a maximal totally real submanifold. If $f : M \to \C$ is a holomorphic function that
vanishes on $N$, then $f$ vanishes on $M$.
\end{propo}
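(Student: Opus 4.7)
The plan is to leverage the local model provided by Proposition \ref{atrs}(ii), which says that near any point of $N$ the pair $(M,N)$ looks holomorphically like $(\C^n,\R^n)$, and then apply the classical identity principle for holomorphic functions.

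First I would pick any point $x_0 \in N$ and choose a holomorphic chart $z : U \to V \subset \C^n$ around $x_0$ satisfying $z(U \cap N) = V \cap \R^n$. Pulling back via this chart, the function $g := f \circ z^{-1} : V \to \C$ is holomorphic and vanishes on $V \cap \R^n$. On any polydisk $D \subset V$ centered at $z(x_0)$, $g$ expands as a convergent power series $g(w) = \sum_\alpha a_\alpha (w - z(x_0))^\alpha$. The restriction of $g$ to $D \cap \R^n$ is precisely the same series evaluated at real arguments, and a real-analytic function that vanishes identically on an open subset of $\R^n$ has all its Taylor coefficients equal to zero. Hence every $a_\alpha$ is zero, so $g \equiv 0$ on $D$, and consequently $f$ vanishes on a neighborhood of $x_0$ in $M$.

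Now define
\begin{equation*}
W := \{ x \in M : f \text{ vanishes on some open neighborhood of } x \}~.
\end{equation*}
By the previous step, $W$ contains $N$ (assumed nonempty, else the statement is vacuous), so $W$ is a nonempty open subset of $M$. It remains to show $W$ is closed. Given $x \in \overline{W}$, choose a connected open neighborhood $\Omega$ of $x$ in $M$; then $\Omega \cap W$ is a nonempty open subset of $\Omega$ on which $f$ vanishes. The standard identity principle for holomorphic functions on the connected complex manifold $\Omega$ then forces $f \equiv 0$ on $\Omega$, so $x \in W$. Thus $W$ is clopen and nonempty in the connected manifold $M$, hence $W = M$ and $f$ vanishes identically.

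There is no real obstacle in this argument; the only subtle points are to invoke the correct local model (characterization (ii) of Proposition \ref{atrs}, which requires the \emph{analytic} hypothesis on $N$ — tacit here since in later applications $N$ will be the Fuchsian slice, a real-analytic submanifold) and to observe that a convergent complex power series whose restriction to $\R^n$ is identically zero has all coefficients equal to zero.
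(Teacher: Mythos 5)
Your argument is internally sound, but it proves a slightly weaker statement than the one asserted. The proposition, as written, assumes only that $N$ is a maximal totally real \emph{submanifold} --- the word ``analytic'' does not appear in the hypothesis --- whereas your proof rests entirely on the local normal form $z(U\cap N)=V\cap\R^n$ of Proposition \ref{atrs}(ii), which the paper establishes only for \emph{analytic} maximal totally real submanifolds (and which genuinely requires analyticity: a merely smooth totally real submanifold, e.g.\ the graph of a non-analytic function $\R^n\to\R^n$ with small derivative inside $\C^n$, admits no such chart, since a biholomorphism would force the parametrization to be real-analytic). You do flag this yourself, calling the analytic hypothesis ``tacit,'' but the paper explicitly goes the other way: it remarks that the chart argument would dispose of the analytic case immediately, and then deliberately produces a proof that avoids analyticity altogether.

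The paper's route is worth internalizing because it is both more general and quite clean: from $f|_N\equiv 0$ one gets $(df)|_{TN}=0$; the splitting $T_xM=T_xN\oplus JT_xN$ at points of $N$, combined with $\C$-linearity of $df$ (holomorphicity), forces $df$ to vanish at every point of $N$; hence each $\partial f/\partial z_k$ is again a holomorphic function vanishing on $N$, and by induction all partial derivatives of $f$ of every order vanish along $N$, so the Taylor series of $f$ at a point of $N$ is zero and $f$ vanishes near $N$. Your concluding clopen/connectedness argument for globalizing is fine and matches the paper's appeal to the identity theorem. For the application in the paper (the Fuchsian slice, which is real-analytic) your version suffices, but as a proof of the proposition as stated it has a genuine gap unless you either add the analyticity hypothesis or replace the chart step by the derivative-propagation argument above.
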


\begin{proof}
By the identity theorem for holomorphic functions, it is enough to show that $f$ vanishes on a small open neighborhood $U$ of some point $x\in N$.
If $N$ is analytic, this is an straightforward consequence of characterization $(ii)$ in Proposition \ref{atrs}. Let us produce a proof that does not assume
analyticity of $N$. Since the restriction $f_{|N}$ vanishes identically, we have ${(df)}_{|TN} = 0$. Using the fact that $T_x M = T_xN \oplus J T_x N$
for all $x\in N$ and the holomorphicity of $f$, it is easy to derive that $df$ vanishes at all points of $N$. In particular, if 
$z=(z_k)_{1\leqslant k \leqslant n} : U \to \C^n$ is
a holomorphic chart, the partial derivatives $\dfrac{\partial f}{\partial z_k}$ vanish on $N$. But those are again holomorphic functions, so we can use
the same argument: their partial derivatives must vanish on $N$. By an obvious induction, we see that all partial derivatives of $f$ (at any order)
vanish at points of $N$. Since $f$ is holomorphic, this implies that $f=0$.
\end{proof}

We can now finally prove:
\begin{propo}
Let $M$ be a connected complex manifold and $\sigma : N \rightarrow M$ be a maximal totally real embedding. If $\omega$ is a closed $(2,0)$-form 
on $M$ such that $\sigma^* \omega = 0$, then $\omega = 0$.
\end{propo}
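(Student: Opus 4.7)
The plan is to exploit two features of the problem: a closed $(2,0)$-form on a complex manifold is automatically holomorphic, and at any point of a maximal totally real submanifold, the value of a $(2,0)$-form is entirely determined by its restriction to that submanifold's tangent space. Combining these two observations reduces the statement to the scalar case treated in the previous proposition.

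First I would verify that $\omega$ is holomorphic. Writing $d\omega = \partial \omega + \bar\partial \omega$, the two summands have types $(3,0)$ and $(2,1)$ respectively, so $d\omega = 0$ forces $\bar\partial \omega = 0$. In any holomorphic chart $(U, z_1, \dots, z_n)$, one therefore has an expansion $\omega = \sum_{i<j} f_{ij}\, dz_i \wedge dz_j$ with $f_{ij}$ holomorphic on $U$. Next I would show that $\omega$ vanishes pointwise on $\sigma(N)$. Let $x \in \sigma(N)$ and set $T = T_x \sigma(N)$; by maximal total reality, $T_x M = T \oplus JT$. Since every $(1,0)$-form satisfies $\alpha \circ J = i\alpha$, a direct computation gives $\omega(Ju, v) = \omega(u, Jv) = i\, \omega(u, v)$ and $\omega(Ju, Jv) = -\omega(u, v)$ for all real tangent vectors $u, v$ at $x$. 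Expanding $\omega(u_1 + Ju_2, v_1 + Jv_2)$ bilinearly for $u_1, u_2, v_1, v_2 \in T$, every resulting term is a scalar multiple of $\omega$ evaluated on a pair drawn from $T$, and all such values vanish by the hypothesis $\sigma^*\omega = 0$. Hence $\omega_x = 0$.

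Finally I would globalize. In any holomorphic chart $U$ meeting $\sigma(N)$, each coefficient $f_{ij}$ is a holomorphic function on $U$ vanishing on the maximal totally real submanifold $\sigma(N) \cap U$ of $U$, so the previous proposition applied to $U$ yields $f_{ij} \equiv 0$. This shows that $\omega$ vanishes on an open neighborhood of $\sigma(N)$ in $M$; since $\omega$ is holomorphic and $M$ is connected, the identity principle applied chart by chart extends the vanishing to all of $M$. The main conceptual step is the type calculation at Step 2: once one sees that the $(2,0)$ hypothesis together with maximality of the totally real subspace forces pointwise vanishing of $\omega$ on $\sigma(N)$, the rest of the argument is a componentwise application of the previous scalar result.
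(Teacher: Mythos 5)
Your proof is correct and follows essentially the same route as the paper: use the type decomposition of $d\omega$ to see that a closed $(2,0)$-form is holomorphic, deduce pointwise vanishing along the totally real submanifold from $T_xM = T \oplus JT$, and then apply the preceding scalar proposition to the holomorphic coefficients in charts together with the identity principle. The only difference is that you spell out the bilinear expansion showing $\omega_x = 0$, a step the paper leaves as ``easy to see.''
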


\begin{proof}
We can assume that $N \subset M$, so the hypothesis is that $\omega_{|TN} = 0$. Since $T_xM = T_xN \oplus JT_xN$ for any $x\in N$ and $\omega$ is of type $(2,0)$,
it is easy to see that $\omega$ vanishes at points of $N$. Now, recall that a closed $(2,0)$-form  is holomorphic.
Let $z=(z_k)_{1\leqslant k \leqslant n} : U \to \C^n$ be a holomorphic chart in a neighborhood of a point $x\in N$, $\omega$ has an expression
of the form $\omega = \sum_{j,k} f_{jk} dz_j \wedge dz_k$ where $f_{jk}$ are holomorphic
functions on $U$. Since $\omega$ vanishes at points of $N$, the functions $f_{jk}$ vanish on $U \cap N$, and we derive from the previous proposition that they
actually vanish on $U$. We thus have $\omega_{|U} = 0$, and it follows once again from the identity theorem (taken in charts) that $\omega$ vanishes on $M$.
\end{proof}

An immediate consequence of this, together with (\ref{wgfsss}), is that a complex symplectic structure on $\CPS$ agrees with the standard complex
symplectic structure if and only if it induces the Weil-Petersson K\"ahler form on the Fuchsian slice:

\begin{theorem}\label{bilibili}
Let $\omega$ be a complex symplectic structure on $\CPS$. Then $\omega = \omega_G$ if and only if ${(\sigma_{\cal F})}^* \omega = \omega_{WP}$.
\end{theorem}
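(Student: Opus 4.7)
The proof should be a direct application of the analytic continuation machinery just developed, together with Goldman's theorem stated in equation~(\ref{wgfsss}). The plan is to reduce the statement to the previous proposition by considering the difference $\omega - \omega_G$.

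First, the ``only if'' direction is immediate: if $\omega = \omega_G$, then $(\sigma_{\cal F})^*\omega = (\sigma_{\cal F})^*\omega_G = \omega_{WP}$ by Goldman's theorem.

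For the ``if'' direction, I would proceed as follows. Assume $(\sigma_{\cal F})^*\omega = \omega_{WP}$, and set $\eta := \omega - \omega_G$. Since both $\omega$ and $\omega_G$ are complex symplectic forms on $\CPS$, they are in particular closed holomorphic $2$-forms, i.e.\ closed $(2,0)$-forms; hence so is $\eta$. Applying Goldman's identity $(\sigma_{\cal F})^*\omega_G = \omega_{WP}$ together with our hypothesis, we get
\begin{equation*}
(\sigma_{\cal F})^*\eta = (\sigma_{\cal F})^*\omega - (\sigma_{\cal F})^*\omega_G = \omega_{WP} - \omega_{WP} = 0~.
\end{equation*}
Now $\sigma_{\cal F} : \TS \to \CPS$ is a maximal totally real analytic embedding (as observed in the discussion following Proposition~\ref{atrs}, since $\FS$ is the image of the diagonal of $\TS \times \mathcal{T}(\overline{S})$ under the holomorphic embedding $\beta^+$). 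The previous proposition then applies directly to $\eta$ and yields $\eta = 0$, that is $\omega = \omega_G$.

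There is essentially no obstacle: the work has all been done in the preceding paragraphs. The only thing worth double-checking is that a complex symplectic structure on $\CPS$ is indeed a closed $(2,0)$-form --- but this is built into the very notion of complex symplectic structure used throughout the paper. Thus the proof amounts to four lines combining Goldman's theorem with the analytic continuation proposition.
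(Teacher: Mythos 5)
Your proof is correct and is essentially identical to the paper's: both reduce to applying the analytic continuation proposition to the closed $(2,0)$-form $\omega - \omega_G$ and invoking Goldman's identity $(\sigma_{\cal F})^*\omega_G = \omega_{WP}$. Nothing is missing.
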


\begin{proof}
By the previous proposition, $\omega = \omega_G$ if and only if $(\sigma_{\cal F})^*(\omega-\omega_G) = 0$. But by (\ref{wgfsss}),
$\sigma_{\cal F}^*\omega_G = \omega_{WP}$, therefore $(\sigma_{\cal F})^*(\omega-\omega_G) = (\sigma_{\cal F})^*\omega - \omega_{WP}$.
\end{proof}

\subsection{The affine cotangent symplectic structures}

Recall (see section $\ref{affid}$) that any section $\sigma :\TS \to \CPS$ determines an affine identification $\tau^{\sigma} : \CPS \stackrel{\sim}{\to} T^*\TS$ and thus
a complex-valued non-degenerate $2$-form $\omega^{\sigma} = ({\tau^{\sigma}})^*\omega$ on $\CPS$. $\omega^{\sigma}$ is a complex symplectic
structure on $\CPS$ if and only if $\sigma$ is a holomorphic section to $p$. We will now answer the question: for which holomorphic sections $\sigma$
does $\omega^{\sigma}$ agree with the standard symplectic structure $\omega_G$?

As a direct consequence of theorem \ref{bilibili}, together with Proposition \ref{craca}, we show:

\begin{theorem}\label{malauvent}
Let $\sigma : \TS \rightarrow \CPS$ be a section to $p$. Then $\omega^\sigma$ agrees with the standard complex symplectic structure $\omega_G$ on $\CPS$
if and only if $\sigma_{\cal F} - \sigma$ is a primitive for the Weil-Petersson metric on $\TS$:
\begin{equation}\omega^{\sigma} = \omega_G ~ \Leftrightarrow ~ d(\sigma_{\cal F} - \sigma) = \omega_{WP}~.\end{equation}
More generally, if $c$ is some complex constant,
\begin{equation}\omega^{\sigma} = c \omega_G ~ \Leftrightarrow ~ d(\sigma_{\cal F} - \sigma) = c \omega_{WP}~.\end{equation}
\end{theorem}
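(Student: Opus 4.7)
The plan is to combine Proposition \ref{craca} with Theorem \ref{bilibili} in a direct computation: Proposition \ref{craca} relates $\omega^\sigma$ to $\omega^{\sigma_{\cal F}}$ via the affine difference $\sigma_{\cal F} - \sigma$, while Theorem \ref{bilibili} reduces the question of equality with $\omega_G$ to a question about the pullback to the Fuchsian slice. Pulling the first identity back by $\sigma_{\cal F}$ should exactly express $(\sigma_{\cal F})^*\omega^\sigma$ in terms of $d(\sigma_{\cal F} - \sigma)$.

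More precisely, I would first apply Proposition \ref{craca} with $\sigma_1 = \sigma$ and $\sigma_2 = \sigma_{\cal F}$ to obtain
\begin{equation*}
\omega^{\sigma_{\cal F}} - \omega^\sigma = -p^*d(\sigma_{\cal F} - \sigma)~.
\end{equation*}
Next I would observe that $(\sigma_{\cal F})^*\omega^{\sigma_{\cal F}} = 0$: by construction $\tau^{\sigma_{\cal F}} \circ \sigma_{\cal F}$ is the zero section $s_0$ of $T^*\TS$, which is a Lagrangian embedding (as recalled in section \ref{compsympcot}), so $(\sigma_{\cal F})^*\omega^{\sigma_{\cal F}} = s_0^*\omega_{\mathrm{can}} = 0$. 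Since $p\circ\sigma_{\cal F} = \mathrm{id}_{\TS}$, pulling the displayed equation back by $\sigma_{\cal F}$ yields
\begin{equation*}
(\sigma_{\cal F})^*\omega^\sigma = d(\sigma_{\cal F} - \sigma)~.
\end{equation*}

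From here the conclusion is immediate. By Theorem \ref{bilibili}, we have $\omega^\sigma = \omega_G$ if and only if $(\sigma_{\cal F})^*\omega^\sigma = \omega_{WP}$, and by the computation above this is equivalent to $d(\sigma_{\cal F} - \sigma) = \omega_{WP}$. The same argument, applied to $\omega^\sigma / c$ when $c \neq 0$ (or directly via the analytic continuation proposition of section \ref{ancont} combined with Goldman's identity ${(\sigma_{\cal F})}^*\omega_G = \omega_{WP}$), gives the more general form $\omega^\sigma = c\omega_G \Leftrightarrow d(\sigma_{\cal F} - \sigma) = c\omega_{WP}$.

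There is no serious obstacle here; the theorem is really the \emph{synthesis} of Proposition \ref{craca} (a purely formal cotangent-bundle computation), the Lagrangianity of the zero section, and Theorem \ref{bilibili} (itself resting on the analytic continuation argument and Goldman's identity on the Fuchsian slice). The only mild point to be careful about is that Theorem \ref{bilibili} is stated for complex symplectic structures, so one should note that the statement is only meaningful when $\omega^\sigma$ is of type $(2,0)$, i.e.\ when $\sigma$ is holomorphic; for non-holomorphic $\sigma$ the equality $\omega^\sigma = c\omega_G$ forces $\omega^\sigma$ to be $(2,0)$ anyway, so no generality is lost.
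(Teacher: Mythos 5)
Your proposal is correct and follows essentially the same route as the paper: both apply Proposition \ref{craca} with the pair $(\sigma,\sigma_{\cal F})$, pull back by $\sigma_{\cal F}$ using $p\circ\sigma_{\cal F}=\mathrm{id}$ and the Lagrangianity of the zero section to get $(\sigma_{\cal F})^*\omega^\sigma = d(\sigma_{\cal F}-\sigma)$, and then conclude via Theorem \ref{bilibili}. Your remark on the type-$(2,0)$ hypothesis for non-holomorphic $\sigma$ is a sensible precision that the paper itself glosses over.
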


\begin{proof}
By Theorem \ref{bilibili}, $\omega^\sigma = \omega_G$ if and only if ${(\sigma_{\cal F})}^* \omega^\sigma = \omega_{WP}$.
However, it follows from Proposition \ref{craca} that
\begin{eqnarray*}
{(\sigma_{\cal F})}^* \omega^\sigma &=& {(\sigma_{\cal F})}^*\left[\omega^\sigma_{\cal F} -p^*d(\sigma - \sigma_{\cal F})\right]\\
&=& {(\sigma_{\cal F})}^*((\tau^{\sigma_{\cal F}})^* \omega_{\mathrm{can}}) - {(\sigma_{\cal F})}^*(p^*d(\sigma - \sigma_{\cal F}))\\
&=& {(\tau^{\sigma_{\cal F}} \circ \sigma_{\cal F})}^* \omega_{\mathrm{can}} - {(p \circ \sigma)}^*d(\sigma - \sigma_{\cal F})\\
&=& {s_0}^* \omega_{\mathrm{can}} - \mathrm{id}^*d(\sigma - \sigma_{\cal F})\\
&=& -d(\sigma - \sigma_{\cal F})~.
\end{eqnarray*}
Let us make a couple of comments on this calculation: recall that $\omega_{\mathrm{can}}$ denotes the canonical symplectic 
structure on $T^*\TS$; $\tau^{\sigma} \circ \sigma
= s_0$ is the characterization of $\tau^\sigma$; and ${s_0}^*\omega = 0$ because the zero section $s_0$ is a Lagrangian in $T^*\TS$ (see 
section \ref{compsympcot}). Of course, the proof of the apparently more general second statement is just the same. 
\end{proof}

Now, McMullen proved in \cite{mcmullenkahler} the following theorem:
\begin{theorem}[McMullen \cite{mcmullenkahler}]\label{mcm}
If $\sigma$ is any Bers section, then \begin{equation}d(\sigma_{\cal F} - \sigma) = -i\omega_{WP}~.\end{equation}
\end{theorem}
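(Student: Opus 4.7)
The plan is to prove the identity by decomposing $d(\sigma_{\cal F} - \sigma)$ with respect to the Hodge bidegree on $\TS$. Set $\alpha := \sigma_{\cal F} - \sigma$: since $\CPS \to \TS$ is an affine bundle modeled on the holomorphic cotangent bundle, $\alpha$ is a smooth section of $T^*\TS$, equivalently a smooth $(1,0)$-form on $\TS$. Hence $d\alpha$ splits as $\partial\alpha + \bar\partial\alpha$, of bidegrees $(2,0)$ and $(1,1)$, and since the target $-i\omega_{WP}$ is a purely imaginary $(1,1)$-form, the problem decomposes into showing $(A)$ $\partial\alpha = 0$ and $(B)$ $\bar\partial\alpha = -i\omega_{WP}$. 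A useful preliminary reduction stems from the holomorphicity of the simultaneous uniformization $\beta$: any two Bers sections $\sigma_Y, \sigma_{Y'}$ (indexed by $Y \in {\cal T}(\overline{S})$) differ by a holomorphic $(1,0)$-form on $\TS$, so $\bar\partial\alpha$ is independent of the choice of $Y$. When computing it at a given $X_0 \in \TS$ I may therefore conveniently take $Y = \overline{X_0}$, which by the Fuchsian identity $\sigma_{\cal F}(X) = \sigma_{\overline{X}}(X)$ yields $\alpha(X_0) = 0$ and simplifies all differentiations.

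For part $(A)$, the vanishing of $\partial\alpha_Y$ for every $Y$ is equivalent, by taking differences, to the closedness of $\sigma_{Y_1} - \sigma_{Y_0}$ as a holomorphic $(1,0)$-form on $\TS$ for every pair $Y_0, Y_1 \in {\cal T}(\overline{S})$. This is the content of \emph{quasi-Fuchsian reciprocity}, of which Theorem \ref{qfrcp} is a generalization: the symmetry of the Hessian of the Bers embedding. I would prove it by expressing both mixed partial derivatives of $\beta^+$ as integrals of Beltrami differentials against holomorphic quadratic differentials via the Ahlfors-Weill section, then invoking the manifest symmetry of the resulting reproducing kernel (the quasi-Fuchsian Bergman kernel).

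For part $(B)$, evaluate $\bar\partial\alpha$ at $X_0$ with $Y = \overline{X_0}$. Because $\alpha(X_0) = 0$, this reduces to the antiholomorphic derivative of $\beta^+$ in the Bers parameter, namely $\partial_Y\beta^+(X_0, Y)|_{Y = \overline{X_0}}$ applied to an antiholomorphic tangent vector and paired with a tangent vector $\nu \in T_{X_0}\TS$. Ahlfors' variational formula for the Schwarzian of the Bers embedding, applied at the Fuchsian point, identifies this pairing as an integral of a Beltrami differential against a holomorphic quadratic differential, which after the required conjugations simplifies to $-i$ times the Weil-Petersson inner product. The principal obstacle is precisely this final computation: tracking the correct normalizations (the sign, the factor of $i$, and the factor $-\tfrac{1}{4}$ from the definition of the Weil-Petersson pairing) through the chain of Schwarzian derivatives and infinitesimal quasiconformal deformations.
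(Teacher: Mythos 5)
The first thing to say is that the paper does not prove this statement: it is imported verbatim from McMullen and used as the external input from which Theorem \ref{main1} is deduced via Theorem \ref{malauvent} (a generalization is later \emph{recovered} as Corollary \ref{coui}, but from the Takhtajan--Teo identity, not independently). So your proposal must be measured against McMullen's own argument, and in outline it reproduces that argument faithfully: the bidegree splitting of $d(\sigma_{\cal F}-\sigma_Y)$, the observation that two Bers sections differ by a holomorphic $(1,0)$-form, reciprocity for the holomorphic part, and the evaluation at $Y=\overline{X_0}$ using $\sigma_{\cal F}(X)=\sigma_{\overline{X}}(X)$ to reduce the $(1,1)$ part to the derivative of the Bers embedding at the Fuchsian point. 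This is the right route.

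There is, however, one genuine logical gap in part $(A)$ as you state it. The claimed equivalence between ``$\partial\alpha_Y=0$ for every $Y$'' and ``$\sigma_{Y_1}-\sigma_{Y_0}$ is closed for every pair'' holds only in the direction you do not need: closedness of all differences (which is what quasi-Fuchsian reciprocity delivers) shows that $\partial\alpha_Y$ -- indeed all of $d\alpha_Y$ -- is \emph{independent of} $Y$, not that the common value is zero. The missing step is to run the $Y=\overline{X_0}$ evaluation on the $(2,0)$ part as well: writing $\alpha_{\overline{X_0}}(X)=\beta^+(X,\overline{X})-\beta^+(X,\overline{X_0})$ and using $\alpha_{\overline{X_0}}(X_0)=0$, the derivative of $\alpha_{\overline{X_0}}$ at $X_0$ reduces to the differential of $X\mapsto\beta^+(X_0,\overline{X})$, which is antiholomorphic (a holomorphic map precomposed with $X\mapsto\overline{X}$) and therefore annihilates $T^{1,0}_{X_0}\TS$; hence $d\alpha_{\overline{X_0}}(X_0)$ is automatically of type $(1,1)$ and the $(2,0)$ part vanishes. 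Once this is said, your $(A)$ and $(B)$ collapse into a single computation: reciprocity gives $Y$-independence of $d\alpha_Y$, and the antidiagonal evaluation computes it. Beyond this, be aware that the two substantive inputs -- the reproducing-kernel proof of reciprocity and, above all, the normalization that produces exactly $-i\left<\cdot,\cdot\right>_{WP}$ rather than $i$ or $-2i$ -- are only gestured at; for this particular theorem the constant \emph{is} the content (compare the paper's footnote on the constants in Kawai's theorem), so what you have is a correct skeleton of McMullen's proof rather than a proof.
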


Using this result, we eventually obtain as a corollary of Theorem \ref{malauvent}:
\begin{theorem}\label{main1}
If $\sigma : \TS \rightarrow \CPS$ is any Bers section, then \begin{equation}\omega^\sigma = -i \omega_G~.\end{equation}
\end{theorem}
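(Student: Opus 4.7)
The plan is to deduce Theorem \ref{main1} directly as a corollary of the two main tools already assembled just before the statement: the characterization in Theorem \ref{malauvent} and McMullen's identity in Theorem \ref{mcm}. No new computation on $\CPS$ should be needed; the whole content of the result is the conjunction of these two inputs.

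First I would recall the ``more general'' form of Theorem \ref{malauvent}: for any section $\sigma : \TS \to \CPS$ and any complex constant $c$,
\begin{equation*}
\omega^{\sigma} = c\,\omega_G \;\Longleftrightarrow\; d(\sigma_{\cal F} - \sigma) = c\,\omega_{WP}.
\end{equation*}
This in turn rests on the analytic continuation principle (Theorem \ref{bilibili}), Goldman's identity $\sigma_{\cal F}^*\omega_G = \omega_{WP}$, and the variation formula of Proposition \ref{craca} applied to the pair $(\sigma_{\cal F},\sigma)$, evaluated against the fact that the zero section of $T^*\TS$ is Lagrangian for $\omega_{\mathrm{can}}$. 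All of these are established in the excerpt, so I would simply invoke the equivalence without redoing the chain of pullbacks.

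Next, I would take $\sigma$ to be an arbitrary Bers section and apply McMullen's Theorem \ref{mcm}, which asserts
\begin{equation*}
d(\sigma_{\cal F} - \sigma) = -i\,\omega_{WP}.
\end{equation*}
Plugging $c = -i$ into the equivalence from Theorem \ref{malauvent} yields $\omega^\sigma = -i\,\omega_G$, which is precisely the statement of Theorem \ref{main1}.

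The only conceptual point worth flagging is that there is no real obstacle here: the hard work has already been done. The genuine mathematical content sits in Theorem \ref{bilibili} (analytic continuation from the Fuchsian slice, using that $\FS$ is maximally totally real in $\CPS$) and in McMullen's primitive formula, and both are imported. If anything, the step requiring a small amount of care is making sure the sign and factor conventions used for $\omega_G$, $\omega_{WP}$, $\omega_{\mathrm{can}}$, and the Schwarzian identification $\tau^\sigma$ are the same on both sides of the equivalence in Theorem \ref{malauvent}; once those conventions are fixed consistently (as in the calculation ending with $-d(\sigma-\sigma_{\cal F})$ in the proof of \ref{malauvent}), the constant $-i$ transfers with no modification.
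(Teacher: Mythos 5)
Your proposal is correct and matches the paper's argument exactly: the paper obtains Theorem \ref{main1} as an immediate corollary of the general form of Theorem \ref{malauvent} (with $c=-i$) combined with McMullen's Theorem \ref{mcm}, with no additional computation. Your remark about checking the sign conventions in the chain leading to $-d(\sigma-\sigma_{\cal F})$ is apt but already handled in the proof of Theorem \ref{malauvent}.
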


In particular, we can deduce from this identification the following properties:
\begin{coro}\label{tropo}
Consider the space $\CPS$ equipped with its standard symplectic structure $\omega_G$. Then
\begin{enumerate}
 \item The canonical projection $p : \CPS \to \TS$ is a Lagrangian fibration.
 \item Bers slices are the leaves of a Lagrangian foliation of the quasi-Fuchsian space $\QFS$.
\end{enumerate}
\end{coro}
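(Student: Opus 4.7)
The plan is to deduce both parts of the corollary almost mechanically from Theorem \ref{main1}, using the general facts about cotangent symplectic structures already established in Section \ref{affid}. The key observation is that once $\omega_G$ has been identified with $-i\omega^\sigma$ for a Bers section $\sigma$ (up to a nonzero scalar, which does not affect isotropy or Lagrangianity), every statement of the form ``$L$ is Lagrangian for $\omega^\sigma$'' transfers automatically to a statement about $\omega_G$, since isotropy and maximality of dimension are both preserved under multiplication of the form by a nonzero constant.

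For part (1), I would fix an arbitrary Bers section $\sigma : \TS \to \CPS$. By the last proposition of Section \ref{affid}, the projection $p : \CPS \to \TS$ is a Lagrangian fibration for the affine cotangent symplectic form $\omega^\sigma$: the fibers $P(X) = p^{-1}(X)$ are identified via $\tau^\sigma$ with the fibers of $\pi : T^*\TS \to \TS$, which are Lagrangian in $(T^*\TS, \omega_{\mathrm{can}})$. Now Theorem \ref{main1} asserts $\omega^\sigma = -i\omega_G$, so the same fibers are Lagrangian for $\omega_G$. Since this holds fiberwise and $p$ is surjective with $\dim_\C P(X) = \tfrac{1}{2}\dim_\C \CPS$, $p$ is a Lagrangian fibration in the sense required.

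For part (2), I would again invoke the last proposition of Section \ref{affid}, which says that any section $\sigma : \TS \to \CPS$ is a Lagrangian embedding for $\omega^\sigma$. Taking $\sigma$ to be a Bers section and applying Theorem \ref{main1}, the Bers slice $\sigma(\TS)$ is Lagrangian for $\omega_G$. The foliation statement is then simply the content of Bers' simultaneous uniformization theorem as recalled in Section \ref{fqf}: as $X^-$ ranges over ${\cal T}(\overline S)$, the Bers sections $\sigma_{X^-}$ sweep out $\QFS$, and their images form one of the two transverse holomorphic foliations of $\QFS$ by copies of $\TS$ described there. Every leaf is Lagrangian by the argument above, which yields the claimed Lagrangian foliation.

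There is no real obstacle here: the whole point of Theorem \ref{main1} is to make such statements immediate. The only point that deserves a line of care is checking that ``Lagrangian'' is indeed preserved when passing from $\omega^\sigma$ to $\omega_G = i\omega^\sigma$, but this is clear since the two forms have the same isotropic subspaces and the dimension condition is intrinsic to the submanifold. One could also note that part (1) can alternatively be proved directly from Proposition \ref{vertg} without invoking Theorem \ref{main1}, since that proposition shows the kernel of $\omega^\sigma$ restricted to vertical vectors coincides with the vertical tangent space itself; but going through Theorem \ref{main1} keeps the exposition uniform and makes part (2) immediate as well.
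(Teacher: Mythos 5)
Your proof is correct and matches the paper's intended argument: the corollary is stated there as an immediate consequence of Theorem \ref{main1} combined with the proposition at the end of Section \ref{affid} (that $p$ is a Lagrangian fibration and any section is a Lagrangian embedding for its own $\omega^\sigma$), which is exactly the route you take. Your remark that multiplying the symplectic form by the nonzero constant $-i$ preserves isotropy and the dimension condition is the only point needing care, and you handle it correctly.
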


We also derive an explicit expression of $\omega_G(u,v)$ when $u$ is a vertical tangent vector (by Proposition \ref{vertg}):
\begin{coro}\label{ikik}
Let $u$, $v$ be tangent vectors at $Z \in \CPS$ such that $u$ is vertical, {\it i.e.}
$p_* u= 0$. Then  \begin{equation}\omega_G(u,v) = i \langle u,p_*v \rangle~.\end{equation}
\end{coro}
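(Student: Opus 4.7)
The plan is to derive this corollary as a direct consequence of Theorem \ref{main1} combined with Proposition \ref{vertg}. Both ingredients are already in place, so the argument is essentially a one-line calculation rather than a substantive new proof.

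First, I would fix a Bers section $\sigma : \TS \to \CPS$ (any choice will do, and existence is guaranteed by the discussion in section \ref{fqf}). By Proposition \ref{vertg}, which gives the explicit expression for the cotangent-type symplectic structure $\omega^\sigma$ applied to a vertical vector, we have
\begin{equation*}
\omega^\sigma(u,v) = \langle u, p_*v \rangle
\end{equation*}
whenever $p_* u = 0$. The key observation is that the right-hand side does not involve $\sigma$ at all; this independence on the section was already noted right after Proposition \ref{vertg}, and it is consistent with Proposition \ref{craca} since the difference $\omega^{\sigma_2} - \omega^{\sigma_1} = -p^* d(\sigma_2 - \sigma_1)$ is a horizontal 2-form and therefore vanishes on any pair containing a vertical vector.

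Next I would invoke Theorem \ref{main1}, which identifies $\omega^\sigma = -i\,\omega_G$ for any Bers section $\sigma$. Substituting this into the identity above gives $-i\,\omega_G(u,v) = \langle u, p_*v \rangle$, and multiplying both sides by $i$ yields the claimed formula $\omega_G(u,v) = i\langle u, p_*v \rangle$.

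There is essentially no obstacle here, since everything rests on results already established earlier in the paper. The only minor subtlety worth a parenthetical remark is the compatibility of the identification $T_Z P(X) = Q(X) = T_X^*\TS$ used to view the vertical vector $u$ as a covector paired with $p_* v \in T_X\TS$; this is the same identification used in Proposition \ref{vertg} and requires no further justification.
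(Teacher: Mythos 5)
Your proposal is correct and follows exactly the route the paper intends: the corollary is stated as a direct consequence of Proposition \ref{vertg} (which gives $\omega^\sigma(u,v)=\langle u,p_*v\rangle$ for vertical $u$, independently of $\sigma$) combined with Theorem \ref{main1} ($\omega^\sigma=-i\omega_G$ for a Bers section). The sign bookkeeping is right and no further argument is needed.
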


Looking back at Proposition \ref{craca}, we also get the expression of the $2$-form $\omega^{\sigma_{\cal F}}$
obtained under the Fuchsian identification:

\begin{coro}\label{fghh}
Let $\sigma_{\cal F} : \TS \to \CPS$ be the Fuchsian section. Then
\begin{equation}\omega^{\sigma_{\cal F}} = -i(\omega_G - p^* \omega_{WP})~.\end{equation}
\end{coro}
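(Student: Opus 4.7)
The plan is to apply Proposition \ref{craca} with the two sections $\sigma_1 = \sigma$, a Bers section, and $\sigma_2 = \sigma_{\cal F}$, the Fuchsian section, and then substitute in the two main results already established.

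First I would write the identity given by Proposition \ref{craca}:
\begin{equation*}
\omega^{\sigma_{\cal F}} - \omega^{\sigma} = -p^*d(\sigma_{\cal F} - \sigma)~.
\end{equation*}
Next I would invoke McMullen's theorem (Theorem \ref{mcm}) to replace $d(\sigma_{\cal F} - \sigma)$ with $-i\omega_{WP}$, which transforms the right-hand side into $ip^*\omega_{WP}$. Then I would use Theorem \ref{main1} to replace $\omega^{\sigma}$ on the left with $-i\omega_G$, and rearrange to obtain
\begin{equation*}
\omega^{\sigma_{\cal F}} = -i\omega_G + ip^*\omega_{WP} = -i(\omega_G - p^*\omega_{WP})~.
\end{equation*}

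There is essentially no obstacle here: the corollary is a formal bookkeeping consequence of the three ingredients (the affine-difference formula, Theorem \ref{main1}, and McMullen's identity), all of which are stated earlier. The only thing to be mildly careful about is a sign in the application of Proposition \ref{craca} when the two sections are swapped, but the formula is symmetric in the sense that $\sigma_2-\sigma_1$ changes sign and so does the overall expression in the expected way.
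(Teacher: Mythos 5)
Your proof is correct and is exactly the argument the paper intends: the corollary is presented as an immediate consequence of Proposition \ref{craca} combined with Theorem \ref{main1} and McMullen's identity $d(\sigma_{\cal F}-\sigma)=-i\omega_{WP}$, with the signs working out just as you computed.
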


It should not come as a surprise that we see from this equality that $\omega^{\sigma_{\cal F}}$ vanishes on the Fuchsian slice.
Notice the equality between real symplectic structures: 
\begin{equation}\mathrm{Re}(\omega^{\sigma_{\cal F}}) = \mathrm{Im}(\omega_G) \label{mpmp}\end{equation}
and that $\mathrm{Re}(\omega^{\sigma_{\cal F}})$ is (half) the real canonical symplectic structure on $T^*\TS$ pulled back by the Fuchsian identification.

Finally, we note that McMullen's \emph{quasi-Fuchsian reciprocity} theorem showed in \cite{mcmullenkahler}
can be seen as a consequence of Theorem \ref{main1}. We will give a precise statement and proof of a generalized version of this 
theorem in the setting of convex cocompact $3$-manifolds (Theorem \ref{qfrcp}).

\subsubsection*{Generalizations in the setting of convex cocompact hyperbolic $3$-manifolds}

Let $\hat{M}$ be a compact $3$-manifold as in section \ref{cphyp}. We will use here the same notations as in \ref{cphyp}. Recall that we have defined
there a canonical holomorphic section $\beta : {\cal T}(\partial \hat{M}) \to {\cal CP}(\partial \hat{M})$.

McMullen and Takhtajan-Teo gave generalized versions of quasi-Fuchsian reciprocity, which they called \emph{Kleinian reciprocity}, notably in 
\cite{mcmullenkahler} (Appendix) and \cite{takteo}. In particular, {\bf Theorem 6.3} in \cite{takteo} says the following:

\newtheorem*{papaga}{Theorem}
\begin{papaga}
Let $\sigma_{\cal F} : {\cal T}(\partial \hat{M}) \to {\cal CP}(\partial \hat{M})$ denote the Fuchsian section. Then
$$d(\sigma_{\cal F} - \beta) = -i \omega_G~.$$
\end{papaga}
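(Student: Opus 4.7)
The plan is to compute $d(\sigma_{\cal F} - \beta)$ directly, decomposing the 2-form along the product structure ${\cal T}(\partial\hat M) = \prod_k {\cal T}(S_k)$ and ${\cal CP}(\partial\hat M) = \prod_k {\cal CP}(S_k)$. Writing $\alpha := \sigma_{\cal F} - \beta$, this is a 1-form on ${\cal T}(\partial\hat M)$ whose $k$-th component at $(X_1,\ldots,X_N)$ lies in $Q(X_k) = T^*_{X_k}{\cal T}(S_k)$, namely $\alpha_k = \sigma_{\cal F}(X_k) - \pr_k\!\circ\beta(X_1,\ldots,X_N)$. The key structural observation is that $\sigma_{\cal F}$ is diagonal (its $k$-th component depends only on $X_k$) while $\beta$ has nontrivial cross-dependence. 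I would evaluate $d\alpha$ on pairs of tangent vectors $u_j \in T_{X_j}{\cal T}(S_j)$ and $v_k \in T_{X_k}{\cal T}(S_k)$, treating the cases $j = k$ (diagonal) and $j \neq k$ (off-diagonal) separately.

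For the diagonal case $j = k$: fixing $X_i$ for $i\neq k$ and varying only $X_k$ reduces $\pr_k\!\circ\beta\circ\iota_{(X_i)}$ to the generalized Bers section $\sigma_{(X_i)} : {\cal T}(S_k)\to{\cal CP}(S_k)$. By Corollary \ref{coui}, $d(\sigma_{\cal F} - \sigma_{(X_i)}) = -i\omega_{WP}$ on ${\cal T}(S_k)$, so the $(k,k)$-block of $d\alpha$ is exactly $-i\omega_{WP}$. For the off-diagonal case $j\neq k$: extending $u_j, v_k$ to commuting local coordinate fields and using that $\sigma_{\cal F}$ contributes nothing off-diagonal, a short computation gives
\begin{equation*}
d\alpha(u_j,v_k) \;=\; -\langle D_{X_j}f_{(X_i),k}(u_j),\,v_k\rangle + \langle u_j,\,D_{X_k}f_{(X_{i'}),j}(v_k)\rangle,
\end{equation*}
where $f_{(X_i),k}$ and $f_{(X_{i'}),j}$ are the two reciprocal generalized Bers embeddings at $(X_1,\ldots,X_N)$. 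Theorem \ref{qfrcp} is precisely the statement that their derivatives are dual maps, so these two pairings coincide and $d\alpha(u_j,v_k) = 0$.

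Assembling the two cases, $d\alpha$ is block-diagonal on ${\cal T}(\partial\hat M)$ with diagonal blocks $-i\omega_{WP}$; since the Weil-Petersson form on the product ${\cal T}(\partial\hat M)$ is itself block-diagonal (the Weil-Petersson inner product splits as a direct sum over boundary components), this assembles to $-i\omega_{WP}$ on the whole product, which is $-i\omega_G$ under Goldman's identification (applied componentwise to the Fuchsian slice). The main obstacle I anticipate is purely notational: one must handle cleanly the identification of the vertical tangent spaces of $p : {\cal CP}(\partial\hat M) \to {\cal T}(\partial\hat M)$ with $T^*{\cal T}(\partial\hat M) = \bigoplus_k Q(X_k)$, and make sure the duality pairing in Theorem \ref{qfrcp} matches the antisymmetrization appearing in $d\alpha$ term by term. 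Getting the signs and dualities consistent --- particularly in the mixed partials of $\beta$ seen as a 1-form with values in the cotangent bundle --- is the one delicate point; once set up, both cases reduce to invoking the stated results.
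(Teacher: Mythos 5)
Your block decomposition is a faithful unpacking of what the theorem asserts: the diagonal $(k,k)$ blocks of $d(\sigma_{\cal F}-\beta)$ are exactly the content of Corollary \ref{coui} for the generalized Bers sections $\sigma_{(X_i)}$, and the vanishing of the off-diagonal $(j,k)$ blocks is exactly the duality of Theorem \ref{qfrcp}; your formula for $d\alpha(u_j,v_k)$ and its matching with the reciprocity pairing are correct. The off-diagonal half of your argument is legitimate as far as logical dependencies go, since the paper proves Theorem \ref{qfrcp} independently of the present statement (from Theorem \ref{lagemb}, i.e.\ the Poincar\'e duality argument, together with Corollary \ref{ikik}, which rests only on the quasi-Fuchsian Theorem \ref{main1} and McMullen's Theorem \ref{mcm}).

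The genuine gap is in the diagonal half: Corollary \ref{coui} is not available to you. In the paper it is \emph{deduced from} the statement you are trying to prove, via Theorem \ref{puuff} and Theorem \ref{main2}, so invoking it here is circular. McMullen's Theorem \ref{mcm} only covers $M=S\times\R$, where $\sigma_{(X_i)}$ is an honest Bers section; for a general convex cocompact $\hat M$ with several boundary components, the identity $d(\sigma_{\cal F}-\sigma_{(X_i)})=-i\omega_{WP}$ on each factor is precisely the new analytic content of the theorem, and there is no soft way to obtain it from the symplectic formalism of this paper. Indeed the paper does not prove this statement at all: it is imported as Theorem 6.3 of Takhtajan--Teo \cite{takteo}, whose proof is a second-variation computation generalizing McMullen's. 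Your argument therefore correctly reduces the theorem to ``Corollary \ref{coui} plus Kleinian reciprocity,'' but leaves unproved exactly the part that requires a proof.
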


Since our theorem \ref{malauvent} above does not assume that $S$ is connected, we obtain:
\begin{theorem}\label{puuff}
Let $\omega_G$ be the standard complex symplectic structure on ${\cal CP}(\partial \hat{M})$ and $\omega^\beta = {(\tau^\beta)}^* \omega$ 
be the complex symplectic structure obtained
by the identification $\tau^\beta : {\cal CP}(\partial \hat{M}) \stackrel{\sim}{\to} T^*{\cal T}(\partial \hat{M})$ as in section \ref{affid}.
Then
\begin{equation}\omega^\beta = -i\omega_G~. \label{ratatac}\end{equation}
\end{theorem}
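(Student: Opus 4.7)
The plan is to reduce Theorem \ref{puuff} to an application of Theorem \ref{malauvent} (in a slightly more general form), using Takhtajan-Teo's generalized reciprocity as the input. In other words, I expect Theorem \ref{puuff} to follow as a mild corollary, exactly parallel to the way Theorem \ref{main1} follows from Theorem \ref{malauvent} via McMullen's Theorem \ref{mcm}.

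First I would check that Theorem \ref{malauvent}, together with all its ingredients (Proposition \ref{craca}, the analytic continuation Theorem \ref{bilibili}, and Goldman's identity $(\sigma_{\cal F})^*\omega_G = \omega_{WP}$), extends verbatim to the case where the ``surface'' $\partial \hat{M}$ is a disjoint union $S_1 \sqcup \dots \sqcup S_N$ of closed oriented surfaces of genus at least $2$. The affine structure of $p : {\cal CP}(\partial \hat{M}) \to {\cal T}(\partial \hat{M})$ factors through the product decomposition, the Fuchsian section $\sigma_{\cal F}$ acts componentwise and remains a maximal totally real analytic embedding (the diagonal in ${\cal T}(\partial \hat{M}) \times {\cal T}(\overline{\partial \hat{M}})$ is again a maximal totally real analytic submanifold), and both $\omega_G$ and $\omega_{WP}$ decompose as direct sums over the boundary components. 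Every step in the proof of Theorem \ref{malauvent}, in particular the key computation ${(\sigma_{\cal F})}^* \omega^\sigma = -d(\sigma - \sigma_{\cal F})$ coming out of Proposition \ref{craca} and the Lagrangian character of the zero section, is purely formal and survives this direct sum. The conclusion is: for every $c \in \C$ and every section $\sigma : {\cal T}(\partial \hat{M}) \to {\cal CP}(\partial \hat{M})$,
\[
\omega^\sigma = c\,\omega_G \quad \Longleftrightarrow \quad d(\sigma_{\cal F} - \sigma) = c\,\omega_{WP}.
\]

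Second, I would apply this generalized statement to $\sigma = \beta$ with $c = -i$. The hypothesis reduces to the identity $d(\sigma_{\cal F} - \beta) = -i\,\omega_{WP}$ on ${\cal T}(\partial \hat{M})$, which is exactly the Takhtajan-Teo theorem quoted just before the statement of Theorem \ref{puuff}. This immediately produces $\omega^\beta = -i\,\omega_G$, which is \eqref{ratatac}.

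The only genuinely delicate point I anticipate is the bookkeeping in the first step: one must verify that none of the ingredients in Theorem \ref{malauvent} secretly uses connectedness of $S$. In practice this is harmless because the whole picture splits as a finite direct sum of the corresponding pictures for each $S_k$, but it is worth a careful pass to confirm, in particular, that the analytic continuation argument (Proposition on vanishing of closed $(2,0)$-forms along a maximal totally real submanifold) applies to the connected complex manifold ${\cal CP}(\partial \hat{M}) = \prod_k {\cal CP}(S_k)$ along its maximal totally real analytic submanifold ${\cal F}(\partial \hat{M}) = \prod_k {\cal F}(S_k)$; this is immediate since the criterion in Proposition \ref{atrs} is preserved by products.
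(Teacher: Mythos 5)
Your proposal is correct and is essentially identical to the paper's own proof: the author likewise invokes Takhtajan--Teo's identity $d(\sigma_{\cal F}-\beta)=-i\,\omega_{WP}$ and notes that Theorem \ref{malauvent} (and its ingredients) never used connectedness of $S$, so it applies directly to $\partial\hat{M}=S_1\sqcup\dots\sqcup S_N$ with $c=-i$ and $\sigma=\beta$. Your extra care in checking that the product/direct-sum structure preserves every step is exactly the (implicit) content of the paper's one-line justification.
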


A first immediate corollary is that we recover Theorem \ref{lagemb}: $\beta$ is a Lagrangian embedding.

Another consequence of this theorem and of the fact that
the projections $p_k : {\cal CP}(S_k) \to {\cal T}(S_k)$ are Lagrangian (Theorem \ref{tropo}) is a generalization of Theorem \ref{main1}:
\begin{theorem}
Let $\sigma : \TS \to \CPS$ be a generalized Bers section (see section \ref{cphyp}). 
Then \begin{equation}\omega^\sigma = -i \omega_G~.\end{equation} \label{main2}
\end{theorem}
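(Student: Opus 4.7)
My plan is to reduce Theorem \ref{main2} to Theorem \ref{malauvent} (applied to the single closed surface $S_j$) by pulling back the Takhtajan--Teo identity that was already used to establish Theorem \ref{puuff}.

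Since $\sigma := \sigma_{(X_i)}$ is a holomorphic section of $p_j : {\cal CP}(S_j) \to {\cal T}(S_j)$, Theorem \ref{malauvent} (with constant $c=-i$) reduces the desired equality $\omega^\sigma = -i\,\omega_G$ to the $1$-form identity
\begin{equation*}
d\bigl(\sigma_{\cal F}^{(j)} - \sigma\bigr) \;=\; -i\,\omega_{WP}^{(j)} \qquad \text{on } {\cal T}(S_j),
\end{equation*}
where $\sigma_{\cal F}^{(j)}$ denotes the Fuchsian section of $p_j$ and $\omega_{WP}^{(j)}$ the Weil-Petersson K\"ahler form on ${\cal T}(S_j)$. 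The key input is the Takhtajan--Teo identity quoted before Theorem \ref{puuff}, namely
\begin{equation*}
d(\sigma_{\cal F} - \beta) \;=\; -i\,\omega_{WP} \qquad \text{on } {\cal T}(\partial\hat M),
\end{equation*}
where $\sigma_{\cal F}$ is the product Fuchsian section, $\beta$ the simultaneous uniformization section, and $\omega_{WP} = \sum_k (\pr_k')^*\omega_{WP}^{(k)}$ the direct-sum Weil-Petersson form, with $\pr_k' : {\cal T}(\partial\hat M) \to {\cal T}(S_k)$ the canonical projection.

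I then pull back this identity by the holomorphic embedding $\iota_{(X_i)} : {\cal T}(S_j) \hookrightarrow {\cal T}(\partial\hat M)$ defined in (\ref{caninj}). The two sides transform as follows. Since $\pr_k'\circ\iota_{(X_i)}$ is constant for $k\neq j$ and is the identity for $k = j$, the sum structure of $\omega_{WP}$ immediately gives $\iota_{(X_i)}^*\omega_{WP} = \omega_{WP}^{(j)}$. For the $1$-form $\sigma_{\cal F} - \beta$, evaluation on $\mu\in T_X{\cal T}(S_j)$ pairs the cotangent vector $(\sigma_{\cal F}-\beta)(\iota_{(X_i)}(X)) \in T^*_{\iota_{(X_i)}(X)}{\cal T}(\partial\hat M)$ with $d\iota_{(X_i)}(\mu)$; but the latter has only a $j^{\text{th}}$ nonzero component (equal to $\mu$), so only the $j^{\text{th}}$ slot of the cotangent vector contributes, which is precisely $\sigma_{\cal F}^{(j)}(X) - \sigma(X)$ by definition of a generalized Bers section. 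Hence $\iota_{(X_i)}^*(\sigma_{\cal F} - \beta) = \sigma_{\cal F}^{(j)} - \sigma$ as $1$-forms on ${\cal T}(S_j)$, and applying $d$ to this and using the pullback of the Takhtajan--Teo identity yields the desired relation, concluding via Theorem \ref{malauvent}.

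The substantive content is Takhtajan--Teo (and indirectly Theorem \ref{main1}, as the author warns in the footnote), both already used to prove Theorem \ref{puuff}; the main obstacle here is simply the bookkeeping of the affine difference of sections under the product decomposition ${\cal T}(\partial\hat M) = \prod_k {\cal T}(S_k)$, which is routine once one carefully unpacks what $\sigma_{\cal F} - \beta$ means as a $1$-form on a product.
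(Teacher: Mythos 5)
Your proof is correct, but it runs the reduction in the opposite direction from the paper. The paper first establishes Theorem \ref{puuff} ($\omega^\beta = -i\omega_G$ on ${\cal CP}(\partial\hat M)$) and then proves Theorem \ref{main2} by pulling that $2$-form identity back along the embedding $\tilde\iota_{(Z_i)} : {\cal CP}(S_j) \to {\cal CP}(\partial\hat M)$ obtained by freezing projective structures $Z_i \in P(X_i)$ on the other boundary components; the cross terms for $k \neq j$ are killed by observing that the fibers of $T^*{\cal T}(S_k)$ are Lagrangian for $\omega^{(k)}$ and that $p_k$ is a Lagrangian fibration for $\omega_G^{(k)}$ --- the latter being Corollary \ref{tropo}, hence the indirect dependence on Theorem \ref{main1} flagged in the introduction. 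The paper then deduces Corollary \ref{coui} from Theorem \ref{main2}. You instead pull back the Takhtajan--Teo $1$-form identity at the level of the base ${\cal T}(\partial\hat M)$ along $\iota_{(X_i)}$, which proves Corollary \ref{coui} first (the cross terms vanish for the trivial reason that $d\iota_{(X_i)}(\mu)$ has only a $j^{\mathrm{th}}$ component, so no Lagrangian input is needed), and then invoke Theorem \ref{malauvent} with $c=-i$ on the single surface $S_j$. Both arguments are sound and rest on the same two pillars (Takhtajan--Teo and the analytic-continuation equivalence of Theorem \ref{malauvent}); yours is marginally more economical within the paper's logical structure, since it bypasses Corollary \ref{tropo} and hence the indirect appeal to Theorem \ref{main1}. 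One incidental point: the paper's displayed statement of the Takhtajan--Teo theorem reads $d(\sigma_{\cal F}-\beta) = -i\omega_G$, which must be a typo for $-i\omega_{WP}$ (the left-hand side is a $2$-form on ${\cal T}(\partial\hat M)$, and this is the form the identity must take for the paper's own derivation of Theorem \ref{puuff} to go through); you have silently and correctly read it that way.
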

\begin{proof}
By definition, $\sigma$ is map defined by $\sigma = f_{(X_i),j}$ as in section \ref{cphyp}, where $S = S_j$ and $X_i$ is a fixed point in ${\cal T}(S_i)$
for $i \neq j$. Recall that $$\omega_G = (\pr_1)^* {\omega_G}^{(1)} + \dots + (\pr_N)^* {\omega_G}^{(N)}$$ where ${\omega_G}^{(k)}$ is the standard
complex symplectic structure on ${\cal CP}(S_k)$, and similarly $$\omega = (\pr_1)^* {\omega}^{(1)} + \dots + (\pr_N)^* {\omega}^{(N)}\footnote{
Be wary that in this equality, $\pr_k$ stands for the $k^{\textrm{th}}$ projection map $T^*{\cal T}(\partial \hat{M}) \rightarrow
T^*{\cal T}(S_k)$ (whereas it stood for the $k^{\textrm{th}}$ projection map ${\cal CP}(\partial \hat{M}) \rightarrow
{\cal CP}(S_k)$ in the previous equality). We apologize for these (slightly) misleading notations.}$$
where ${\omega}^{(k)}$ is the canonical complex symplectic structure on $T^*{\cal T}(S_k)$. The equality (\ref{ratatac}) can thus be rewritten:
$${(\pr_1 \circ \tau^\beta)}^* {\omega}^{(1)} + \dots + {(\pr_N \circ \tau^\beta)}^* {\omega}^{(N)} = 
-i\left[(\pr_1)^* {\omega_G}^{(1)} + \dots + (\pr_N)^* {\omega_G}^{(N)}\right]~.$$
Fix $Z_i \in P(X_i)$ for $i \neq j$ and let us pull back this equality on ${\cal CP}(S_j)$ by the map
$$\tilde{\iota_{(Z_i)}} : 
\begin{array}{ccl}
{\cal CP}(S_j) & \rightarrow & {\cal CP}(\partial \hat{M})\\
  Z & \mapsto & (Z_1, \dots, Z_{j-1},Z,Z_{j+1}, \dots, X_N)~.
\end{array}$$
For $k \neq j$, the map $\pr_k \circ \tau^\beta \circ \tilde{\iota}_{(Z_i)}$ takes values in the fiber ${T_{X_k}}^*{\cal T}(S_k)$, so that
${({\tilde{\iota}}_{(Z_i)})}^*\left({(\pr_k \circ \tau^\beta)}^* {\omega}^{(k)}\right) = 0$.
Similarly, the map $\pr_k \circ \tilde{\iota}_{(Z_i)}$ maps into the fiber $P(X_k) \subset {\cal CP}(S_k)$, so that
${({\tilde{\iota}}_{(Z_i)})}^*\left((\pr_k)^* {\omega_G}^{(k)}\right) = 0$ because $p_k$ is a Lagrangian fibration.
For $k=j$, $\pr_k \circ \tau^\beta \circ \tilde{\iota}_{(Z_i)}$ is the map $\tau^\sigma : {\cal CP}(S_j) \to T^*{\cal T}(S_j)$ and 
$\pr_k \circ \tilde{\iota}_{(Z_i)}$ is the identity in ${\cal CP}(S_j)$. We therefore obtain the desired equality 
$(\tau^\sigma)^* \omega^{(j)} = -i {\omega_G}^{(j)}$.
\end{proof}

An immediate corollary of this is:
\begin{coro}
Generalized Bers sections $\TS \to \CPS$ are Lagrangian embeddings.
\end{coro}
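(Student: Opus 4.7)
The plan is to derive this corollary as an essentially immediate consequence of Theorem \ref{main2} combined with the fact that the zero section of a cotangent bundle is Lagrangian. Let $\sigma : \TS \to \CPS$ be a generalized Bers section. First I would observe that $\sigma$ is automatically an embedding: as a section to the projection $p$, it is injective and its differential is injective (since $p \circ \sigma = \mathrm{id}$ implies $p_* \circ d\sigma = \mathrm{id}$). The dimension condition for a Lagrangian is also immediate, since $\dim_\C \CPS = 6g-6 = 2 \dim_\C \TS$ (this was noted in section \ref{pif}).

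The substantive content is the isotropy, i.e.\ $\sigma^* \omega_G = 0$. Here I would chain together three observations. By Theorem \ref{main2}, $\omega^\sigma = -i\omega_G$, so
\begin{equation*}
\sigma^* \omega_G = i\, \sigma^* \omega^\sigma = i\, \sigma^* (\tau^\sigma)^* \omega_{\mathrm{can}} = i\, (\tau^\sigma \circ \sigma)^* \omega_{\mathrm{can}}~.
\end{equation*}
By the very characterization of $\tau^\sigma$ recalled in diagram (\ref{tautau}), $\tau^\sigma \circ \sigma$ is the zero section $s_0 : \TS \hookrightarrow T^*\TS$. Finally, since $s_0$ is a Lagrangian embedding of $\TS$ in $T^*\TS$ (this is a standard property of the canonical symplectic form recalled at the end of section \ref{compsympcot}), we get $s_0^* \omega_{\mathrm{can}} = 0$, hence $\sigma^* \omega_G = 0$.

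There is essentially no obstacle here — the content of the corollary has already been absorbed into Theorem \ref{main2}. The only thing to be careful about is the bookkeeping in the case of disconnected boundary: $\TS$ above should be read as ${\cal T}(S_j)$ for the appropriate boundary component $S_j$, $\omega_G$ stands for the symplectic form ${\omega_G}^{(j)}$ on ${\cal CP}(S_j)$, and the identification $\tau^\sigma : {\cal CP}(S_j) \stackrel{\sim}{\to} T^*{\cal T}(S_j)$ uses $\sigma$ as zero section exactly as in section \ref{affid}. With those conventions the computation above goes through verbatim.
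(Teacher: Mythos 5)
Your proof is correct and follows exactly the route the paper intends: the corollary is stated as an immediate consequence of Theorem \ref{main2}, since $\sigma$ is by construction the zero section for the identification $\tau^\sigma$ and the zero section of $T^*\mathcal{T}(S_j)$ is Lagrangian for $\omega_{\mathrm{can}}$ (equivalently, the paper already noted in section \ref{affid} that any section $\sigma$ is Lagrangian for $\omega^\sigma$). The chain $\sigma^*\omega_G = i\,(\tau^\sigma\circ\sigma)^*\omega_{\mathrm{can}} = i\,s_0^*\omega_{\mathrm{can}} = 0$ and the bookkeeping for disconnected boundary are both handled correctly.
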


Another corollary of Theorem \ref{main2} and Theorem \ref{malauvent} is a generalization of McMullen's Theorem \ref{mcm}:
\begin{coro}\label{coui}
Let $\sigma : \TS \to \CPS$ be a generalized Bers section. Then \begin{equation}d(\sigma_{\cal F} - \sigma) = -i\omega_{WP}~.\end{equation}
\end{coro}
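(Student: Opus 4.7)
The plan is to derive this as an immediate consequence of Theorem \ref{main2} combined with the general characterization established in Theorem \ref{malauvent}. Recall that Theorem \ref{malauvent} gives the equivalence $\omega^\sigma = c\,\omega_G \Leftrightarrow d(\sigma_{\cal F}-\sigma) = c\,\omega_{WP}$ for any complex constant $c$ and any section $\sigma : \TS \to \CPS$. So once we know the left-hand side for $c = -i$, the right-hand side follows for free.

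More explicitly, I would first invoke Theorem \ref{main2}, which has just been established: for any generalized Bers section $\sigma$, we have $\omega^\sigma = -i\,\omega_G$. Then, applying the ``$c$-version'' of Theorem \ref{malauvent} with $c = -i$, this identity is equivalent to $d(\sigma_{\cal F} - \sigma) = -i\,\omega_{WP}$, which is exactly the conclusion. No further computation is required.

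The only subtlety to check is that Theorem \ref{malauvent} really does apply in this generalized setting: it was stated on $\CPS$ for $S$ connected, but its proof rests only on Proposition \ref{craca} (valid for arbitrary sections) and the analytic continuation argument of Theorem \ref{bilibili} together with Goldman's identity $(\sigma_{\cal F})^*\omega_G = \omega_{WP}$, all of which extend verbatim to the disconnected case $\partial \hat{M} = S_1 \sqcup \cdots \sqcup S_N$, since the Fuchsian section, the Weil-Petersson form, and the standard symplectic structure all decompose as direct sums over the boundary components. Hence the characterization $\omega^\sigma = c\,\omega_G \Leftrightarrow d(\sigma_{\cal F}-\sigma) = c\,\omega_{WP}$ holds in the generalized context as well, and the corollary follows.

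There is no real obstacle here: the work has been done in Theorem \ref{main2}, and the corollary is essentially a restatement through the equivalence of Theorem \ref{malauvent}. In this sense, Corollary \ref{coui} is the natural generalization of McMullen's Theorem \ref{mcm} to the convex cocompact setting, obtained by combining Takhtajan-Teo's ``Kleinian reciprocity'' (which furnishes Theorem \ref{main2}) with our symplectic comparison formula.
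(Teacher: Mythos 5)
Your proof is correct and is exactly the paper's argument: Corollary \ref{coui} is presented there precisely as a consequence of Theorem \ref{main2} combined with the ``$c$-version'' of Theorem \ref{malauvent} applied with $c=-i$. The extra care you take about disconnected boundaries is not actually needed for this corollary, since a generalized Bers section is by definition a section over a single connected component $S=S_j$, so Theorem \ref{malauvent} applies verbatim.
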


Finally, we show a generalized version of McMullen's ``quasi-Fuchsian reciprocity''.
To this end, we introduce the notion of ``reciprocal generalized Bers embeddings'': with the notations of section \ref{cphyp}, 
we say that $f : {\cal T}(S_j) \to {\cal CP}(S_k)$
and $g : {\cal T}(S_k) \to {\cal CP}(S_j)$ are \emph{reciprocal generalized Bers embeddings} if $f = f_{(X_i)_{i \neq j}, k}$ and $g = f_{(X_i)_{i \neq k}, j}$
for some fixed $X = (X_1, \dots, X_N) \in  {\cal T}(\partial \hat{M})$. Since $f$ and $g$ take values in affine spaces, one can consider 
their derivatives:
\begin{eqnarray}
D_{X_j}f : T_{X_j}{\cal T}(S_j) \to T_{X_k}^*{\cal T}(S_k)\\
D_{X_k}g : T_{X_k}{\cal T}(S_k) \to T_{X_j}^*{\cal T}(S_j)
\end{eqnarray}
\begin{theorem}\label{qfrcp}
Let $f : {\cal T}(S_j) \to {\cal CP}(S_k)$
and $g : {\cal T}(S_k) \to {\cal CP}(S_j)$ be reciprocal generalized Bers embeddings as above. Then $D_{X_j}f$ and $D_{X_k}g$ are dual maps.
In other words, for any $\mu\in T_{X_j}{\cal T}(S_j)$ and $\nu\in T_{X_k}{\cal T}(S_k)$,
\begin{equation} \langle D_{X_j}f(\mu),\nu \rangle = \langle \mu,D_{X_k}g(\nu) \rangle~.\end{equation}
\end{theorem}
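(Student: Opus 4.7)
The plan is to deduce the reciprocity from the fact that the simultaneous uniformization section $\beta:{\cal T}(\partial\hat M)\to {\cal CP}(\partial\hat M)$ is Lagrangian with respect to $\omega_G$. This is a consequence of Theorem \ref{puuff} (and was observed directly in Theorem \ref{lagemb}). Writing out $\beta^*\omega_G=0$ applied to two carefully chosen tangent vectors will produce exactly the bilinear identity we want.

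Concretely, given $\mu\in T_{X_j}{\cal T}(S_j)$ and $\nu\in T_{X_k}{\cal T}(S_k)$, I would form the tangent vectors $\tilde\mu,\tilde\nu\in T_X{\cal T}(\partial\hat M)$ obtained from the injections $\iota_{(X_i)_{i\ne j}}$ and $\iota_{(X_i)_{i\ne k}}$ of (\ref{caninj}): $\tilde\mu$ has $\mu$ in the $j$-th slot and $0$ elsewhere, $\tilde\nu$ has $\nu$ in the $k$-th slot and $0$ elsewhere. Lagrangianity of $\beta$ says
\begin{equation*}
0\;=\;\omega_G\bigl(D\beta(\tilde\mu),D\beta(\tilde\nu)\bigr)\;=\;\sum_{i=1}^{N}\omega_G^{(i)}\bigl(D(\pr_i\circ\beta)(\tilde\mu),\,D(\pr_i\circ\beta)(\tilde\nu)\bigr).
\end{equation*}
The key observation is the type of each of the $2N$ vectors appearing in this sum. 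By construction, $D(\pr_k\circ\beta)(\tilde\mu)=D_{X_j}f(\mu)$ and this vector lies in the fiber $P(X_k)$, hence is vertical and identifies with an element of $Q(X_k)=T_{X_k}^*{\cal T}(S_k)$; symmetrically $D(\pr_j\circ\beta)(\tilde\nu)=D_{X_k}g(\nu)\in Q(X_j)$. On the other hand $D(\pr_j\circ\beta)(\tilde\mu)$ is the derivative of the generalized Bers section on $S_j$ applied to $\mu$, so projects to $\mu$ under $(p_j)_*$; likewise $D(\pr_k\circ\beta)(\tilde\nu)$ projects to $\nu$ under $(p_k)_*$. For indices $i\notin\{j,k\}$, both factors are vertical.

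Now apply Corollary \ref{ikik} (the explicit formula $\omega_G(u,v)=i\langle u,p_*v\rangle$ when $u$ is vertical) slot by slot. The slots $i\ne j,k$ contribute $0$, since two vertical vectors pair trivially. In the $k$-slot one gets $i\langle D_{X_j}f(\mu),\nu\rangle$, and in the $j$-slot, after swapping arguments (introducing a minus sign), one gets $-i\langle D_{X_k}g(\nu),\mu\rangle$. The vanishing of the total sum yields
\begin{equation*}
\langle D_{X_j}f(\mu),\nu\rangle\;=\;\langle \mu,D_{X_k}g(\nu)\rangle,
\end{equation*}
which is the claimed reciprocity.

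The only non-formal step is the bookkeeping in the middle paragraph: one must carefully distinguish which of the four ``mixed'' tangent vectors are vertical (and therefore identify canonically with cotangent vectors to Teichm\"uller space) and which are horizontal lifts by a generalized Bers section. Once these identifications are nailed down, Corollary \ref{ikik} converts every nonzero term into a duality pairing, and the Lagrangian relation $\beta^*\omega_G=0$ supplies exactly the equality between the two pairings. I expect the sign in the $j$-slot (coming from the antisymmetry of $\omega_G^{(j)}$) to be the only subtle point.
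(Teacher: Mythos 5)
Your proposal is correct and follows essentially the same route as the paper: build the two tangent vectors supported in the $j$-th and $k$-th slots, expand the Lagrangian condition $\beta^*\omega_G=0$ componentwise, and convert the $j$- and $k$-components into duality pairings via Corollary \ref{ikik} (with the sign from antisymmetry in the $j$-slot). The bookkeeping of which images are vertical and which project onto $\mu$ or $\nu$ is exactly the paper's argument, so there is nothing to add.
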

\begin{proof}
The fact that $\beta : {\cal T}(\partial \hat{M}) \to {\cal CP}(\partial \hat{M})$ is Lagrangian is written
\begin{equation*}\beta^* \omega_G = {(\pr_1 \circ \beta)}^* {\omega_G}^{(1)} + \dots + {(\pr_N \circ \beta)}^* {\omega_G}^{(N)} = 0~,\end{equation*}
where ${\omega_G}^{(i)}$ is the standard
complex symplectic structure on the component ${\cal CP}(S_i)$. Let $\mu\in T_{X_j}{\cal T}(S_j)$ and $\nu\in T_{X_k}{\cal T}(S_k)$, and define 
$U,V \in T_X{\cal T}(\partial \hat{M})$ by 
$$U_i = \left\{\begin{array}{cl}
         0 & \textrm{ for } i \neq j\\
         \mu & \textrm{ for } i = j
        \end{array}\right.$$
and
$$V_i = \left\{\begin{array}{cl}
         0 & \textrm{ for } i \neq k\\
         \nu & \textrm{ for } i = k
        \end{array}\right.~.$$
Note that ${(\pr_i \circ \beta)}_* U$ is vertical in ${\cal CP}(S_i)$ whenever $i\neq j$ 
(resp.  ${(\pr_i \circ \beta)}_* V$ is vertical in ${\cal CP}(S_i)$ whenever $i\neq k$). 
Since the fibers of ${\cal CP}(S_i)$ are isotropic (see Theorem \ref{tropo}),
it follows that $\left({(\pr_i \circ \beta)}^* {\omega_G}^{(i)}\right)(U,V) = 0$ whenever $i \not\in \{j,k\}$. For $i=k$, ${(\pr_i \circ \beta)}_* U = D_{X_j}f(\mu)$
is still vertical and ${(p_k)}_*\left({(\pr_i \circ \beta)}_*V\right) = \nu$ so we can derive from Theorem \ref{ikik} that 
$\left({(\pr_k \circ \beta)}^* {\omega_G}^{(k)}\right)(U,V) = i \langle D_{X_j}f(\mu), \nu \rangle $. 
Similarly, for $i=j$ we have
$$\left({(\pr_j \circ \beta)}^* {\omega_G}^{(j)}\right)(U,V) = - \left({(\pr_j \circ \beta)}^* {\omega_G}^{(j)}\right)(V,U)
= -i \langle D_{X_k}g(\nu), \mu \rangle~.$$
In the end, $0 = \left(\beta^*\omega_G\right)(U,V) = i \langle D_{X_j}f(\mu), \nu \rangle -i \langle D_{X_k}g(\nu), \mu \rangle$ as desired.
\end{proof}

Note that Corollary \ref{coui} is not an immediate consequence of ``Kleinian reciprocity'': the proof requires Lagrangian information.
On the other hand, Steven Kerckhoff pointed out to me that Theorem \ref{qfrcp} can be derived from Kleinian reciprocity 
without using our previous results, 
rightly so
(the proof is easily adapted taking $\omega^\beta$ instead of $\omega_G$, avoiding the use of the symplectic structure on $\CPS$).

\subsection{Darboux coordinates}

It is an immediate consequence of the analytical continuation property \ref{bilibili} and Wolpert's theorem \ref{FNdarb} that
complex Fenchel-Nielsen coordinates are Darboux coordinates for the standard symplectic structure on the quasi-Fuchsian space:

\begin{theorem}\label{main3}
Let $\alpha$ be any pants decomposition of $S$. Complex Fenchel-Nielsen coordinates $(l_\alpha^\C,\beta_\alpha^\C)$ on the 
quasi-Fuchsian space $\QFS$ are
Darboux coordinates for the standard complex symplectic structure:
\begin{equation}\omega_G = \sum_{i=1}^N dl_{\alpha_i}^\C \wedge d\tau_{\alpha_i}^\C~.\end{equation}
\end{theorem}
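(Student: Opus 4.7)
The plan is to recognize this as a direct application of the analytic continuation machinery developed in section \ref{ancont}, combined with Wolpert's classical Darboux theorem \ref{FNdarb} on the Fuchsian slice. Let $\omega' := \sum_{i=1}^N dl_{\alpha_i}^\C \wedge d\tau_{\alpha_i}^\C$ on $\QFS$. Since the complex Fenchel-Nielsen length and twist parameters are holomorphic functions on $\QFS$ (by the theorem of Kourouniotis--Tan), $\omega'$ is a holomorphic $(2,0)$-form, hence automatically closed. Similarly $\omega_G$ restricted to the open subset $\QFS \subset \CPS$ is a holomorphic symplectic form, so $\eta := \omega_G - \omega'$ is a closed $(2,0)$-form on $\QFS$.

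The strategy is then to show that $(\sigma_{\cal F})^*\eta = 0$ on $\TS$, and to apply the analytic continuation argument to conclude that $\eta \equiv 0$ on $\QFS$. For the first point, recall from the definitions recalled in the paper that the complex length $l_\gamma^\C$ restricts on the Fuchsian slice $\FS$ to the ordinary hyperbolic length $l_\gamma$, and likewise the complex twist parameter $\tau_\gamma^\C$ restricts to the classical twist parameter $\tau_\gamma$ (complex twisting with purely real parameter on a Fuchsian structure is just the Fenchel--Nielsen twist, as noted in the discussion preceding Theorem \ref{main3}). Consequently
\begin{equation*}
(\sigma_{\cal F})^*\omega' = \sum_{i=1}^N dl_{\alpha_i} \wedge d\tau_{\alpha_i},
\end{equation*}
which equals $\omega_{WP}$ by Wolpert's Theorem \ref{FNdarb}. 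On the other hand, Goldman's Theorem (equation (\ref{wgfsss})) gives $(\sigma_{\cal F})^*\omega_G = \omega_{WP}$. Hence $(\sigma_{\cal F})^*\eta = 0$ on $\TS$.

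For the second point, $\QFS$ is a connected complex manifold (it is biholomorphic to $\TS \times \overline{\TS}$ via $\beta$) and $\FS = \sigma_{\cal F}(\TS)$ is a maximal totally real analytic submanifold of $\QFS$, exactly as was observed in section \ref{ancont} for $\CPS$ (the same argument applies since $\QFS$ is an open subset containing $\FS$, and the antiholomorphic involution of $\QFS$ realizing $\FS$ as its fixed set is the ``turn upside down'' involution). The analytic continuation proposition proved in section \ref{ancont} then applies verbatim to the closed $(2,0)$-form $\eta$ on $\QFS$ and yields $\eta = 0$, which is the desired equality $\omega_G = \omega'$.

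There is essentially no obstacle here beyond being careful that each ingredient applies on $\QFS$ rather than on the full $\CPS$. The only mild subtlety is justifying that the analytic continuation lemma, stated for $\CPS$ in the excerpt, works equally well on $\QFS$: this is automatic since the lemma only requires a connected complex manifold containing a maximal totally real submanifold, and the proof (which proceeds locally via power-series extension in holomorphic charts) is insensitive to the ambient manifold. Notice that we obtain Corollary \ref{za} for free: Platis' symplectic structure $\omega_P$ also satisfies $\omega_P = \sum dl_{\alpha_i}^\C \wedge d\tau_{\alpha_i}^\C$ by his complex Darboux theorem, so $\omega_P = \omega_G$ on $\QFS$.
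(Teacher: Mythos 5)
Your proposal is correct and is essentially identical to the paper's own proof: the paper also observes that the restriction of $\sum_i dl_{\alpha_i}^\C \wedge d\tau_{\alpha_i}^\C$ to the Fuchsian slice is $\sum_i dl_{\alpha_i} \wedge d\tau_{\alpha_i} = \omega_{WP}$ by Wolpert's Theorem \ref{FNdarb}, and then invokes Theorem \ref{bilibili} (which packages Goldman's identity $(\sigma_{\cal F})^*\omega_G = \omega_{WP}$ together with the analytic continuation argument), noting exactly as you do that this theorem remains valid on any connected neighborhood of $\FS$ such as $\QFS$.
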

\begin{proof}
Of course, Theorem \ref{bilibili} is still true when replacing $\CPS$ by any connected neighborhood of the Fuchsian slice $\FS$, such as the 
quasi-Fuchsian space $\QFS$.
Let $\omega =  \sum_{i=1}^N dl_{\alpha_i}^\C \wedge d\tau_{\alpha_i}^\C$. Since $l_{\alpha_i}^\C$ and $\tau_{\alpha_i}^\C$ are holomorphic, $\omega$ is 
a complex symplectic structure on $\QFS$. In restriction to the Fuchsian slice, $\iota^* \omega = \sum_{i=1}^N dl_{\alpha_i} \wedge d\tau_{\alpha_i}$
where $(l_\alpha,\tau_\alpha)$ are the classical Fenchel-Nielsen coordinates. By Wolpert's Theorem \ref{FNdarb}, it follows that ${(\sigma^{\cal F})}^* \omega
= \omega_{WP}$. This proves that $\omega = \omega_G$ according to Theorem \ref{bilibili}.
\end{proof}

Of course, this shows in particular:
\begin{coro}\label{za}
Platis' symplectic structure $\omega_P$ is equal to the standard complex symplectic structure $\omega_G$ in restriction to the 
quasi-Fuchsian space $\QFS$.
\end{coro}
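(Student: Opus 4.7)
The plan is to observe that this corollary is essentially a direct comparison of two expressions for the same symplectic form, once Theorem \ref{main3} has been established. Both $\omega_G$ (restricted to $\QFS$) and $\omega_P$ are complex symplectic structures on $\QFS$, and both admit explicit expressions in complex Fenchel-Nielsen coordinates associated to a pants decomposition $\alpha = (\alpha_1,\dots,\alpha_N)$ of $S$.

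First, I would invoke Platis' theorem (the last displayed theorem of section \ref{complexFNP}), which asserts
\begin{equation*}
\omega_P = \sum_{i=1}^N dl_{\alpha_i}^\C \wedge d\tau_{\alpha_i}^\C~.
\end{equation*}
Next, by Theorem \ref{main3}, the very same expression computes $\omega_G$ on $\QFS$:
\begin{equation*}
\omega_G = \sum_{i=1}^N dl_{\alpha_i}^\C \wedge d\tau_{\alpha_i}^\C~.
\end{equation*}
The two forms therefore agree pointwise on $\QFS$, which yields the corollary.

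There is essentially no obstacle here: all the work has already been done in proving Theorem \ref{main3}, itself obtained from the analytic continuation principle (Theorem \ref{bilibili}) together with Wolpert's Darboux-coordinate theorem \ref{FNdarb} on the Fuchsian slice. One small caveat worth mentioning is that Platis' construction of $\omega_P$ and the complex Fenchel-Nielsen coordinates $(l_\alpha^\C,\tau_\alpha^\C)$ employed in \cite{platis} must agree with the conventions used in section \ref{complexFNP} (complex length defined via the complex displacement, complex twist as the generator of bending/quakebending), but this is the case by construction. Thus the corollary reduces to a one-line comparison and could alternatively be phrased as: both $\omega_G$ and $\omega_P$ are complex symplectic forms on the connected complex manifold $\QFS$ whose restrictions to the maximal totally real submanifold $\FS$ coincide with Wolpert's real symplectic form $\sum dl_{\alpha_i}\wedge d\tau_{\alpha_i}=\omega_{WP}$, and hence by Theorem \ref{bilibili} they must be equal on all of $\QFS$.
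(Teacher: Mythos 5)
Your proof is correct and matches the paper's reasoning exactly: the corollary is obtained by juxtaposing Platis' theorem $\omega_P = \sum_{i} dl_{\alpha_i}^\C \wedge d\tau_{\alpha_i}^\C$ with Theorem \ref{main3}, which identifies the same expression with $\omega_G$ on $\QFS$. Your alternative phrasing via Theorem \ref{bilibili} is also consistent with the paper's underlying analytic-continuation argument, so there is nothing to add.
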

Notice how the analytic continuation argument provides a very simple alternative proof of Platis' result that the symplectic structure 
$\sum_{i=1}^N dl_{\alpha_i}^\C \wedge d\tau_{\alpha_i}^\C$ does not depend on a choice of a pants decomposition (relying,
of course, on Wolpert's result).

In \cite{Gsl2}, Goldman gives a fairly extensive description of the complex symplectic structure $\omega_G$ on the character variety
${\cal X}(S,\SL_2(\C))$, discussing in particular the ``Hamiltonian picture''. We recover that the Hamiltonian flow of a complex
length function is the associated complex twist flow.

\bibliographystyle{alpha}
\bibliography{Article1}

\end{document}